\tikzset{double line with arrow/.style args={#1,#2}{decorate,decoration={markings,%
			mark=at position 0 with {\coordinate (ta-base-1) at (0,1pt);
				\coordinate (ta-base-2) at (0,-1pt);},
			mark=at position 1 with {\draw[#1] (ta-base-1) -- (0,1pt);
				\draw[#2] (ta-base-2) -- (0,-1pt);
}}}}
\tikzset{Equal/.style={-,double line with arrow={-,-}}}
\newcommand{\hooklongrightarrow}{\lhook\joinrel\longrightarrow}
\newcommand{\vphan}{\vphantom{\lambda\mu\eta\kappa}}
\newcommand{\N}{\mathbb{N}}
\newcommand{\Z}{\mathbb{Z}}
\DeclareMathOperator{\dd}{d}
\DeclareMathOperator{\Hom}{Hom}
\DeclareMathOperator{\res}{res}
\DeclareMathOperator{\Cohom}{H}
\DeclareMathOperator{\HH}{H}
\DeclareMathOperator{\GL}{GL}
\newcommand{\norm}{\mathcal{N}}
\DeclarePairedDelimiter{\abs}{\lvert}{\rvert}
\DeclarePairedDelimiter{\gen}{\langle}{\rangle}
\newcommand{\isom}{\cong}
\newcommand{\overbar}[1]{\mkern 1.5mu\overline{\mkern-1.5mu#1\mkern-1.5mu}\mkern 1.5mu}
\theoremstyle{plain}
\newtheorem{thm}{Theorem}
\newtheorem{cor}[thm]{Corollary}
\newtheorem{Conjecture}{Conjecture}
\newtheorem{lemma}{Lemma}[section]
\newtheorem{proposition}[lemma]{Proposition}
\newtheorem{theorem}[lemma]{Theorem}
\newtheorem{corollary}[lemma]{Corollary}
\theoremstyle{definition}
\newtheorem{remark}[lemma]{Remark}
\newtheorem{remarks}[lemma]{Remarks}
\newtheorem*{remark*}{Comment}
\DeclareMathOperator{\Heis}{Heis}
\title[Computing a spectral sequence of finite Heisenberg groups]{Computing a spectral sequence of finite Heisenberg groups of prime power order}
\author{Oihana Garaialde Oca\~na and Lander Guerrero S\'anchez}
\begin{document}

\subjclass[2000]{18G40, 20D15,20J06, 13A02}
\keywords{Spectral sequence, cohomology ring, finite $p$-groups}

\thanks{The authors were partially supported by the Basque Government's project IT483-22 and the  Spanish Government's project PID2020-117281GB-I00. The first author was partially supported by the Spanish Government's project MINECOG19/P39 and  the second author was partially supported by the University of the Basque Country's predoctoral fellowship  PIF19/44}

\maketitle

\begin{abstract}
	Let $p\geq 5$ be a prime number, let $n\geq 2$ be a natural number and let $\Heis(p^n)$ denote the Heisenberg group modulo $p^n$. We study the Lyndon-Hochschild-Serre spectral sequence $E(\Heis(p^n))$ associated  to $\Heis(p^n)$ considered as a split extension, and show that, $E(\Heis(p^n))$ collapses in the third page. Moreover, for a fixed $p$, the spectral sequences $E(\Heis(p^n))$ are isomorphic from the second page on.
\end{abstract}


\section{Introduction}





Group cohomology provides a framework to analyse intrinsic algebraic properties of a given group (see \cite[Section 2.1]{Evens91}, \cite{Quillen71} for instance) or to study automorphisms  of groups (compare \cite{Gaschutz65}, \cite{Gaschutz66} and \cite{Wells71}) and it also has applications in algebra and number theory (see \cite{Guillot18} and references therein).
It is also interesting to know which type of graded rings can occur as cohomology rings of finite groups and how many of them are distinct (compare \cite{Carlson05}, \cite{DGG17}, \cite{Symonds21}). However, computing cohomology is extremely complicated  and thus, there are few examples of such rings in the literature. One of the most powerful tools in computing such rings is the Lyndon-Hochschild-Serre spectral sequence (LHSss, for short) and in this paper, we  provide one of the first infinite families of groups of prime power order, whose associated LHSss collapse in the same page. More precisely,  let $p$ denote an odd prime number, let $n\geq 1$ be an integer and let $$G=\Heis(p^n)=C_{p^n}\ltimes (C_{p^n}\times C_{p^n})$$ be the Heisenberg group modulo $p^n$. Note that $G$ is just a finite quotient of the infinite Heisenberg group $\widehat{G}=\Z\ltimes (\Z\times \Z)$. Let moreover $K$ denote a field of characteristic $p$ with trivial $G$-action and let $\HH^{\bullet}(G)=\HH^{\bullet}(G;K)$ denote the cohomology ring of $G$ with coefficients in $K$. We study the LHSss $E$ associated to $G$ as a split extension of $C_{p^n}$ by $C_{p^n}\times C_{p^n}$. We show that,  for all prime numbers $p\geq 5$ and integers $n\geq2$, the spectral sequence $E$ collapses in the third page, and that for such fixed $p$, the spectral sequences $E$ are isomorphic from the second page on; independently of $n$.
To obtain that result, we follow  Siegel's techniques \cite{Siegel96}, where he computes the spectral sequence associated to $\Heis(p)$.

We summarise the main results and give an outline of the paper below. We start by setting the notation in Section \ref{sec: notation}, and in Section \ref{sec: SecondPage} we describe the additive and multiplicative structure of the second page $E_2$ of the spectral sequence $E$ (see Proposition \ref{Prop: Basis D2} and  Theorem \ref{Thm: Structure E2}).  In Section \ref{sec: survivetoE3}, we use maps between cohomology rings to detect some of the generators in $E_2$ that survive to the infinity page $E_{\infty}$.  In Section \ref{sec: SiegelGeneralization}, we provide a generalization of \cite[Corollary 2]{Siegel96}; being the key step to deduce the image of the second differential of the remaining generators in $E_2$ (see Theorem \ref{Thm: Charlap Vasquez cyclic} and Propositions \ref{Prop: d2 mu} and \ref{prop: d2 nu3}, respectively). We postpone the statement of Theorem \ref{Thm: Charlap Vasquez cyclic} to Section \ref{sec: SiegelGeneralization}, as it requires introducing a considerable amount of notation, and its proof can be found in Appendix \ref{Appendix: Siegel}. In Sections \ref{sec: DescriptionThirdPage} and \ref{sec: InfinityPage}, we describe the third page $E_3$ of the spectral sequence and we show that all the remaining differentials are trivial. In turn, we attain the main result of this paper. 

\begin{thm}\label{thm: TheoremA} Let $p\geq 5$ be a prime number and let $n\geq 2$ be an integer. Then, the following statements hold: 
\begin{enumerate}
\item[(i)] The LHS spectral sequence $E(\Heis(p^n))$ collapses in the third page. 
\item[(ii)] For a fixed prime number $p$, the spectral sequences $E(\Heis(p^n))$ are isomorphic as bigraded $K$-algebras from the second page on.
\end{enumerate}
\end{thm}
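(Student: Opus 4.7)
The plan is to follow the usual strategy for a Lyndon--Hochschild--Serre spectral sequence: describe $E_2$ completely, compute $d_2$ on a set of multiplicative generators, show that no $d_r$ with $r\geq 3$ can be nonzero, and finally organise everything so that the dependence on $n$ disappears after the second page. Because $p$ is odd and $K$ has characteristic $p$, the cohomology of any $C_{p^n}$ is abstractly $K[x]\otimes\Lambda(y)$ with $|x|=2$ and $|y|=1$, so the bigraded $K$-algebra $E_2=\HH^{\bullet}(C_{p^n};\HH^{\bullet}(C_{p^n}\times C_{p^n};K))$ is already, up to the choice of names for the generators, independent of $n$. I would take that description, together with the induced $C_{p^n}$-action on the fibre, as input from Proposition~\ref{Prop: Basis D2} and Theorem~\ref{Thm: Structure E2}.

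Next I would compute $d_2$ on a short list of multiplicative generators and then propagate by the Leibniz rule. The generators naturally split into three types. Inflated classes from the base row are $d_2$-cycles for trivial reasons. Degree-$1$ classes that already die in the abelianisation of $\Heis(p^n)$ transgress in the expected way, with $d_2$ read off the extension cocycle. The remaining exceptional classes are those whose $d_2$ is not a transgression: here I would invoke the generalisation of Siegel's computation (Theorem~\ref{Thm: Charlap Vasquez cyclic}) to reduce $d_2$ on those classes to an explicit cup-product expression in $\HH^{\bullet}(C_{p^n};K)$, producing Propositions~\ref{Prop: d2 mu} and \ref{prop: d2 nu3}.

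For part~(i), after writing down $E_3=\ker d_2/\Image d_2$ in terms of the generators of $E_2$, I would exhibit, for each multiplicative generator of $E_3$, a reason to be a permanent cycle: either it has nontrivial image in the LHSss of a suitably chosen abelian subgroup (whose spectral sequence collapses at $E_2$, so all higher $d_r$ must vanish on it), or it lies on the bottom row as an inflation of a class from $\HH^{\bullet}(C_{p^n};K)$. Leibniz then forces $d_r=0$ on all of $E_3$ for $r\geq 3$, so that the sequence collapses at the third page.

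For part~(ii), the identification of the $E_2$ pages across different $n$ is the tautological one sending named generators to named generators; what must be checked is that the explicit formulas for $d_2$ obtained above are expressed purely in terms of those names, with no $n$ appearing on the right-hand side. The hard step of the whole argument is expected to be the exceptional classes in the second paragraph: the Siegel-type cocycle computation must be carried out with enough care that the resulting $d_2$ is visibly $n$-independent, which is presumably why Theorem~\ref{Thm: Charlap Vasquez cyclic} is postponed to an appendix.
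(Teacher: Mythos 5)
Your outline of the $E_2$-page, the $d_2$-computation via the Charlap--Vasquez/Siegel machinery, and the $n$-independence in part (ii) matches the paper. The genuine gap is in part (i), in the mechanism you propose for killing the higher differentials. After $d_2$, the third page acquires \emph{new} multiplicative generators $\omega_7,\dotsc,\omega_{2p+2}$ and $\xi_{2p+1}$ (classes such as $-\lambda_1\mu_{i-1}$ and $\lambda_2\mu_{2p-1}$, which survive $d_2$ even though their factors $\mu_{i-1}$ do not). These are neither inflated from the bottom row nor products of already-established permanent cycles, so Leibniz gives you nothing for them, and detection on the natural abelian-fibre subgroups fails: for instance, on $H=Q\ltimes(\langle a^p\rangle\times\langle b\rangle)$ the class $\lambda_1=a^*$ restricts to zero (the inclusion $\langle a^p\rangle\hookrightarrow\langle a\rangle$ induces multiplication by $p=0$ on $\HH^1$), hence so does every $\omega_i$, and restriction to $L=C_{p^n}^p\ltimes M$ loses the column grading you need. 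The paper instead uses an eigenvalue/weight argument: the automorphisms $\Phi(\sigma^ka^ib^j)=\sigma^{uk}a^ib^{uj}$ and $\Psi(\sigma^ka^ib^j)=\sigma^ka^{ui}b^{uj}$ of $G$ (with $u$ a generator of $(\Z/p^n\Z)^\times$) act on each generator of $E_3$ by an explicit power of $u$, and comparing weights of a generator with the weights of all possible targets forces every $d_r$, $r\geq 3$, to vanish on $\xi_{2p+1}$ and the $\omega_i$. This is also precisely where the hypothesis $p\geq 5$ is used (the condition $t_2(1-u^{p-3})=0$ for $d_3(\xi_{2p+1})$ fails to force $t_2=0$ when $p=3$); your sketch never invokes $p\geq 5$, which is a sign that the collapsing mechanism you describe is not the one that actually closes the argument.

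A smaller caveat on part (ii): the independence of $E_2$ from $n$ is not purely formal. It holds for $n\geq 2$ because $\sigma^{p}$ then acts trivially on $\HH^{\bullet}(M)$ and the norm $N(\sigma)$ acts as zero, which makes the invariants and coinvariants (and hence the whole bigraded algebra) uniform in $n$; for $n=1$ the page is genuinely different (Siegel's $E_2(\Heis(p))$ has a different, shorter list of generators). Since you take Proposition~\ref{Prop: Basis D2} and Theorem~\ref{Thm: Structure E2} as input this is acceptable, but it is worth flagging that this is where the restriction $n\geq 2$ enters, and that the resulting isomorphisms of $E_2$-pages must be checked to intertwine the explicit $d_2$-formulas of Propositions~\ref{Prop: d2 mu} and~\ref{prop: d2 nu3}, whose coefficients are indeed $n$-free.
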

The description of the infinity page $E_{\infty}(\Heis(p^n))$ determines the dimension of the $K$-vector space $\HH^k\big(\Heis(p^n)\big)$, for every $k\geq 0$. Therefore, we obtain the principal result of Section \ref{sec: PoincareSeries}.

\begin{cor} Let $p\geq 5$ be a prime number and let $n\geq 2$ be an integer. Then, the Poincar\'e series of $\Cohom^{\bullet}(\Heis(p^n))$ is given as follows:
	\begin{displaymath}
	P(t)=\frac{1+t^2-t^3+t^4-t^5+t^{2p+1}}{(1-t)^2(1-t^{2p})}.
	\end{displaymath}
\end{cor}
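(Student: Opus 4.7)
The key observation is that Theorem \ref{thm: TheoremA} completely determines the Poincaré series. Indeed, part (i) gives $E_\infty = E_3$, while part (ii) says that the bigraded $K$-algebra $E_\infty$ is independent of $n$ (for fixed $p$). Since the filtration on $\Cohom^\bullet(G)$ induced by the LHSss has $E_\infty$ as its associated graded, the dimensions of $\Cohom^k(G)$ can be read off from the bigraded pieces of $E_\infty$:
\[
\dim_K \Cohom^k(\Heis(p^n)) \; = \; \sum_{i+j=k} \dim_K E_\infty^{i,j}.
\]
Hence the corollary reduces to computing
\[
P(t) \; = \; \sum_{i,j \geq 0} \dim_K E_\infty^{i,j} \, t^{i+j}
\]
from the explicit description of $E_3$ given in Section \ref{sec: DescriptionThirdPage}.

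To carry this out, I would extract a $K$-basis of $E_3$ from the generators-and-relations description and organise it as a (graded $K$-vector space) tensor product reflecting the surviving horizontal generators coming from $\Cohom^\bullet(C_{p^n})$ and the vertical generators which survive the $d_2$ differentials determined in Propositions \ref{Prop: d2 mu} and \ref{prop: d2 nu3}. Each factor contributes a recognisable Poincaré series: polynomial generators in even degree $2m$ give $1/(1-t^{2m})$, exterior generators in odd degree $2m+1$ give $1+t^{2m+1}$, and any truncated nilpotent pieces contribute finite geometric sums. The denominator $(1-t)^2(1-t^{2p})$ of the stated formula then reflects three surviving free generators (two of degree $1$ together with a polynomial generator of degree $2p$), while the numerator $1+t^2-t^3+t^4-t^5+t^{2p+1}$ encodes the truncated and torsion pieces coming from the nontrivial $d_2$ images and their kernels.

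The principal obstacle is purely algebraic bookkeeping: assembling the finitely many direct summands of $E_3$ over the common denominator $(1-t)^2(1-t^{2p})$ and verifying that the numerators combine into $1+t^2-t^3+t^4-t^5+t^{2p+1}$. A practical sanity check is to expand the proposed rational function as a formal power series and to compare its coefficients against $\sum_{i+j=k}\dim_K E_\infty^{i,j}$ for every $k$ up to $2p+1$; beyond that degree, the formula is pinned down by the quasi-polynomial growth rate dictated by the denominator. Once the low-degree coefficients and the asymptotic behaviour both match, the closed form in the statement follows.
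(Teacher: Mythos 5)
Your proposal is correct and follows essentially the same route as the paper: both reduce the corollary to summing $\dim_K E_\infty^{i,j}$ using $E_\infty=E_3$ and the tensor decomposition $E_\infty=K[\nu_{2p}]\otimes D_\infty$, which accounts for the factor $(1-t^{2p})$ in the denominator. The paper carries out the bookkeeping you describe by tabulating $\dim D_\infty^{r,s}$ from Theorem \ref{Thm: Structure E3}, proving the closed form $\dim D_\infty^k$ in Lemma \ref{Lem: dim D infinity}, and summing the resulting series (note only that the $(1-t)^2$ reflects the eventual linear growth of these dimensions rather than literal free degree-one generators).
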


\vspace{2.0mm}
In Section \ref{sec: conclusion}, we consider the case where $K$ is a finite field of characteristic $p$ and we obtain the next result (see Corollary \ref{cor: conclusion}). 

\begin{cor} Let $p\geq 5$ be a prime number and assume that $K$ is a finite field of characteristic $p$. Then,  there are only finitely many isomorphism types of (graded) algebras in the infinite collection $\{\HH^{\bullet}(\Heis(p^n))\}_{n\geq 1}$.
\end{cor}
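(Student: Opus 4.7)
The plan is to combine Theorem~\ref{thm: TheoremA}(ii) with a finiteness-over-finite-fields argument. That theorem implies that for every $n\geq 2$ the infinity page $E_{\infty}(\Heis(p^n))$ is a single bigraded $K$-algebra $A$, independent of $n$. By construction of the LHS filtration, each $\HH^{\bullet}(\Heis(p^n))$ carries a decreasing multiplicative filtration $F^{\bullet}$ with $\mathrm{gr}_{F}\HH^{\bullet}(\Heis(p^n))\isom A$ as graded $K$-algebras, and the Poincar\'e series from the previous corollary guarantees that each graded piece $\HH^k(\Heis(p^n))$ is a $K$-vector space of some fixed dimension $d_k$ independent of $n$. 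Thus the statement reduces to the following claim: over the finite field $K$, only finitely many graded $K$-algebras with graded dimensions $d_k$ admit a filtration with associated graded $A$.

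To prove this claim, I would use the explicit presentation of $A$ coming from Sections~\ref{sec: DescriptionThirdPage}--\ref{sec: InfinityPage} to fix a finite list of homogeneous generators $x_1,\dots,x_m$ and homogeneous relations $R_1,\dots,R_s$ for $A$. For every $n\geq 2$, pick lifts $\tilde x_i\in\HH^{\bullet}(\Heis(p^n))$ of the $x_i$; these generate the cohomology ring, and the whole multiplicative structure is captured by (a) the lifts $\tilde x_i$ themselves (determined modulo elements of strictly lower filtration), together with (b) the ``error terms'' $R_j(\tilde x_1,\dots,\tilde x_m)$, each of which lies in a strictly deeper filtration level inside a fixed total degree. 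Because $K$ is finite and each filtration piece is a finite-dimensional $K$-vector space, both sets of data (a) and (b) range over finite sets; this bounds the number of isomorphism types occurring in $\{\HH^{\bullet}(\Heis(p^n))\}_{n\geq 2}$ by a finite number. Adjoining the remaining case $n=1$ then yields the corollary.

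The main obstacle I anticipate is making precise the informal assertion above, namely that finitely many finite choices of lifts and error terms really do control the graded $K$-algebra structure up to isomorphism. A clean formalisation is to present $\HH^{\bullet}(\Heis(p^n))$ as a quotient $F/J_n$, where $F$ is the free graded commutative $K$-algebra on symbols of the same degrees as the $x_i$ and $J_n$ is a homogeneous ideal whose image in the associated graded equals the defining ideal of $A$; finiteness of each homogeneous piece of $F$ over the finite field $K$ forces only finitely many such ideals $J_n$ to exist, and hence only finitely many isomorphism types of $\HH^{\bullet}(\Heis(p^n))$.
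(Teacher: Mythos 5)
Your proposal follows the same route as the paper: both steps are (1) use Theorem~\ref{thm: TheoremA}(ii) to conclude that $E_{\infty}(\Heis(p^n))$ is one fixed bigraded $K$-algebra for all $n\geq 2$, and (2) invoke finiteness, over the finite field $K$, of the possible graded algebras whose associated graded under the LHS filtration is that fixed page. The only difference is that the paper simply cites \cite[Theorem 2.1]{Carlson05} for step (2), whereas you sketch a direct proof of it via bounded-degree presentations $F/J_n$ over the finite field; your sketch is essentially the standard argument behind that cited theorem (with the one point to make explicit being that each $J_n$ is generated in degrees bounded by those of the defining relations of $E_\infty$, which is what makes the set of possible ideals finite).
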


The above result is not surprising as the rank of $G$ is two (see \cite{Symonds21}), and it also motivates us to state a conjecture (see Conjecture \ref{conjecture}).


\vspace{2.0mm}
{\it Acknowledgements.} We would like to thank S. F. Siegel for clarifying how to compute the equalities in 
Proposition \ref{prop: alpha and tau appendix}. We would also like to thank Jon Gonz\'alez-S\'anchez for the interesting conversations regarding this project and for his support. 

\section{Background and notation}\label{sec: notation}

Throughout, let $p$ denote an odd prime number, let $n\geq 1$ be an integer and let $K$ denote a field of characteristic $p$. We write $G=C_{p^n}\ltimes (C_{p^n}\times C_{p^n})$ for the Heisenberg group modulo $p^n$ and we set $M=C_{p^n}\times C_{p^n}=\gen{a,b}$ and $Q=C_{p^n}=\gen{\sigma}$. Note that the element $\sigma\in Q$ acts (on the right) on $M$ via  $a^{\sigma}=ab$ and $b^{\sigma}=b$. 

The cohomology ring of $M$ with coefficients in $K$ is
\begin{displaymath}
\Cohom^{\bullet}(M)=\Lambda(x_1,y_1)\otimes_K K[x_2,y_2]=\Lambda(x_1,y_1)\otimes K[x_2,y_2],
\end{displaymath}
with $\abs{x_i}=\abs{y_i}=i$, for $i=1,2$ (see \cite[Proposition 4.5.4]{CarlsonBook03}). We can take 
\begin{alignat*}{2}
x_1&= a^*, \quad\, & y_1 & =b^*, \\
x_2&=\beta_n( x_1), \quad\, & y_2 & =\beta_n( y_1),
\end{alignat*}
where $(\cdot)^*$ denotes the dual element and $\beta_n\colon \Cohom^1(M) \longrightarrow \Cohom^2(M)$ is the $n$-th Bockstein homomorphism \cite[Section 6.2, p.197]{McCleary01}.
The (left) action of $\sigma$ on 
$\Cohom^{\bullet}(M)$ can be shown to be given by
\begin{alignat}{2}\label{ActionOnCohomologyOfM}
\sigma \cdot x_1 & = x_1, \quad\, & \sigma \cdot y_1 & = x_1 + y_1, \\
\sigma \cdot x_2 & = x_2, \quad\, & \sigma \cdot y_2 & = x_2 + y_2. \nonumber
\end{alignat}

For a group $\tilde{G}$ with normal subgroup $\tilde{M}$, there exists a first quadrant spectral sequence $E(\tilde G)$ converging to $\HH^{\bullet}(\tilde{G})$ (see \cite[Section 7.2]{Evens91} and references therein). It is called the Lyndon-Hochschild-Serre spectral sequence (LHSss, for short), and satisfies that 
\[
E_2^{r,s}(\tilde{G})=\HH^r\big(\widetilde{M}; \HH^s(\widetilde{Q})\big) \Longrightarrow \HH^{r+s}(\tilde{G}),
\]
with $r,s,\geq 0$. In the case under study, $M$ is a normal subgroup of $G$ with quotient $Q$ and, for simplicity, we will denote by $E$  the LHSss  associated to the split extension 
\begin{equation}\label{eq: splitextension}
1\to  M\to  G \to  Q \to 1.
\end{equation}

\section{Description of the second page of the spectral sequence}\label{sec: SecondPage}

	

We follow the notation in the previous section and unless otherwise stated, we additionally assume until the end of the manuscript that $n\geq 2$. We use the minimal $KQ$-resolution (\cite[Section I.6]{Brown82}) to compute the cohomology groups $E_2^{r,s}$. Let $N(\sigma)=\sum_{i=0}^{p^n-1}\sigma^i\in KM$ and  as $n\geq 2$, it can be readily checked that, for all $\varphi\in \Cohom^{\bullet}(M)$,  $\sigma^{p}\cdot \varphi=\varphi$ and $N(\sigma)\cdot \varphi=0$ hold. The second page of the spectral sequence then takes the following form: 

\begin{displaymath}
E_2^{r,s}=\Cohom^r\big(Q;\Cohom^s(M)\big) \isom 
\begin{dcases}
\Cohom^s(M)^{Q}, & \text{if } r \text{ is even}, \\
\frac{\Cohom^s(M)}{(\sigma-1)\cdot \Cohom^s(M)}, & \text{if } r \text{ is odd}.
\end{dcases}
\end{displaymath}

Let now
\begin{displaymath}
z_{2p}=\prod_{i=0}^{p-1}\sigma^i\cdot y_2=\prod_{i=0}^{p-1}(ix_2+y_2)\in\Cohom^{2p}(M),
\end{displaymath}
and observe that the element $z_{2p}$ is invariant under the action of $\sigma$. Furthermore, if we 
write 
\begin{align*}
W &=\Lambda^{\bullet}[x_1,y_1]\otimes \gen{x_2^i y_2^j\mid i\geq 0, \quad 0\leq j<p}, 
\\
D_2^{r,\bullet}&=\begin{dcases}
W^{Q}, & \text{if } r \text{ is even}, \\
\frac{W}{(\sigma-1)\cdot W}, & \text{if } r \text{ is odd},
\end{dcases}
\end{align*}
we have that $\Cohom^{\bullet}(M)=K[z_{2p}]\otimes W$, and so
\begin{equation}\label{eq: E2tensorproduct}
E_2^{r,\bullet}=K[z_{2p}]\otimes D_2^{r,\bullet}=\begin{dcases}
K[z_{2p}]\otimes W^{Q}, & \text{if } r \text{ is even}, \\
K[z_{2p}]\otimes \frac{W}{(\sigma-1)\cdot W}, & \text{if } r \text{ is odd}.
\end{dcases}
\end{equation}
Consequently, it suffices to study the structure of $D_2$ so that the structure of $E_2$ is determined.

\subsection{Additive structure}

The first step will be determining a basis of the $K$-vector space $D_2^{r,s}$ for each $r,s\geq 0$.

\begin{proposition}\label{Prop: Basis D2}~
	\begin{enumerate} 
		\item[(i)] For $s\geq 0$, the basis elements of $(W^s)^{Q}$ are the following:
		\renewcommand{\arraystretch}{1.5}
\begin{small}		\begin{figure}[H]
			\centering
		\begin{tabular}{r|l|}
			 \hline
			$2i+1\geq 1$ & $x_2^i, \quad x_1y_1x_2^{i-1}$ \\ \hline
			$2i\geq 2$ & $x_1x_2^i, \quad (x_1y_2-y_1x_2)x_2^{i-1}$ \\ \hline
			$0$ & $1$ \\ \hline
			$s$ & $(W^s)^{Q}$
		\end{tabular}
		\end{figure}\end{small}
		\renewcommand{\arraystretch}{1}

		\item[(ii)] For $s\geq 1$, the basis elements of $(\sigma-1)W^s$ are the following:
		\renewcommand{\arraystretch}{1.5}
	\begin{footnotesize}	\begin{figure}[H]
			\centering
			\begin{tabular}{r|l|} \hline
				$2i+1\geq 3$ & $x_1x_2^jy_2^k, \text{ with } j\geq 1,\quad  0\leq k\leq p-2,\quad  j+k=i$ \\ 
				& $y_1x_2^jy_2^k, \text{ with } j\geq 2,\quad  0\leq k\leq p-3,\quad  j+k=i$\\
				& $x_1x_2^jy_2^k+ky_1x_2^{j+1}y_2^{k-1}, \text{ with } j\geq 0,\quad  0\leq k\leq p-1,\quad  j+k=i$ \\ \hline
				$2i\geq 2$ & $x_2^jy_2^k, \text{ with } j\geq 1,\quad  0\leq k\leq p-2,\quad  j+k=i$ \\ 
				& $x_1y_1x_2^jy_2^k, \text{ with } j\geq 1,\quad  0\leq k\leq p-2,\quad  j+k+1=i$\\ \hline
				$1$ & $x_1$ \\ \hline
				$s$ & $(\sigma-1)W^s$
			\end{tabular}
		\end{figure}\end{footnotesize}
		\renewcommand{\arraystretch}{1}

		\item[(iii)] For $s\geq 0$, the basis elements of $W^s/(\sigma-1)W^s$ are the following:
		\renewcommand{\arraystretch}{1.5}
	\begin{footnotesize}	\begin{figure}[H]
			\centering
			\begin{tabular}{r|l|} \hline
				$2i+1\geq 3$ & $\overbar{x_1^{\varepsilon}y_1^{1-\varepsilon}y_2^k}, \text{ with } \varepsilon=0,1,\quad  0\leq k\leq p-1,\quad  k=i$ \\ 
				& $\overbar{x_1^{\varepsilon}y_1^{1-\varepsilon}x_2^jy_2^{p-1}}, \text{ with } \varepsilon=0,1,\quad  j\geq 1,\quad  j+k+p-1=i$ \\ \hline
				$2i\geq 2$ & $\overbar{(x_1y_1)^{\varepsilon}y_2^k}, \text{ with } \varepsilon=0,1,\quad  0\leq k\leq p-1,\quad  \varepsilon+k=i$ \\ 
				& $\overbar{(x_1y_1)^{\varepsilon}x_2^jy_2^{p-1}}, \text{ with } \varepsilon=0,1,\quad  j\geq 0,\quad  \varepsilon + j+k+p-1=i$\\ \hline
				$1$ & $\overbar x_1$ \\ \hline
				$0$ & $\bar 1$ \\ \hline
				$s$ & $W^s/(\sigma-1)W^s$
			\end{tabular}
		\end{figure}\end{footnotesize}
		\renewcommand{\arraystretch}{1}
	\end{enumerate}
\end{proposition}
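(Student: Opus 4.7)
The plan is to reduce everything to understanding the $Q$-module structure of the subspace $U := K[x_2] \otimes \langle 1, y_2, \ldots, y_2^{p-1} \rangle \subset W$, on which $\sigma$ acts by $x_2 \mapsto x_2$ and $y_2 \mapsto x_2 + y_2$. As a $K$-vector space, $W$ decomposes as $W = U \oplus x_1 U \oplus x_1 y_1 U \oplus y_1 U$. Using $\sigma(x_1) = x_1$, $\sigma(x_1 y_1) = x_1 y_1$ (since $x_1^2 = 0$), and $\sigma(y_1) = x_1 + y_1$, three of these summands are $\sigma$-stable while $y_1 U$ is not; however, $x_1 U \oplus y_1 U$ is $\sigma$-stable and fits into the short exact sequence of $KQ$-modules
$$0 \longrightarrow x_1 U \longrightarrow x_1 U \oplus y_1 U \longrightarrow y_1 U \longrightarrow 0.$$

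Next, I would analyse the $\sigma$-action on $U^{2i}$ in the ordered basis $f_k := x_2^{i-k} y_2^k$ for $0 \leq k \leq \min(i, p-1)$. A direct computation gives
$$\sigma(f_k) = x_2^{i-k}(x_2 + y_2)^k = \sum_{m=0}^{k} \binom{k}{m} f_m,$$
so $\sigma$ is unipotent upper triangular and the subdiagonal entry $\binom{k}{k-1} = k$ is a unit in $K$ for $1 \leq k \leq p-1$. Hence $\sigma - 1$ is injective on $\langle f_1, \ldots, f_{\min(i,p-1)} \rangle$ with image exactly $\langle f_0, \ldots, f_{\min(i,p-1)-1} \rangle$. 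Consequently $(U^{2i})^Q = K \cdot x_2^i$, the subspace $(\sigma-1) U^{2i}$ is spanned by all $f_k$ with $k < \min(i, p-1)$, and $U^{2i}/(\sigma-1) U^{2i}$ is spanned by $\overline{y_2^i}$ when $i \leq p-1$ and by $\overline{x_2^{i-p+1} y_2^{p-1}}$ when $i \geq p-1$.

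For even $s = 2i$ the decomposition $W^{2i} = U^{2i} \oplus x_1 y_1 U^{2(i-1)}$ into $\sigma$-stable summands, combined with the isomorphism $x_1 y_1 U \cong U$ of $KQ$-modules, immediately yields the statements of parts (i)--(iii) for even $s$. For odd $s = 2i+1$, I would write an element of $W^{2i+1} = x_1 U^{2i} \oplus y_1 U^{2i}$ as $x_1 a + y_1 b$ and compute
$$(\sigma-1)(x_1 a + y_1 b) = x_1 \bigl[ (\sigma-1) a + \sigma b \bigr] + y_1 (\sigma-1) b.$$
Invariance forces $b \in (U^{2i})^Q = K x_2^i$ and $(\sigma-1) a = -b$; since $x_2^i = (\sigma-1)(x_2^{i-1} y_2)$ for $i \geq 1$, the solution set yields exactly the basis $\{x_1 x_2^i,\, (x_1 y_2 - y_1 x_2) x_2^{i-1}\}$. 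For coinvariants I would use the short exact sequence above to obtain a right-exact sequence
$$x_1 U^{2i} / (\sigma-1) x_1 U^{2i} \longrightarrow W^{2i+1} / (\sigma-1) W^{2i+1} \longrightarrow y_1 U^{2i} / (\sigma-1) y_1 U^{2i} \longrightarrow 0,$$
and argue that the left map is injective by exhibiting, for each generator of $(y_1 U^{2i})^Q$, a preimage inside $(W^{2i+1})^Q$; namely, $(x_1 y_2 - y_1 x_2) x_2^{i-1}$ lifts a generator of $(y_1 U^{2i})^Q$. This yields the two-element basis of $W^{2i+1}/(\sigma-1) W^{2i+1}$ claimed in part (iii).

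The main obstacle will be the bookkeeping in the mixed summand $x_1 U \oplus y_1 U$: identifying concrete representatives of invariants and coinvariants simultaneously, and treating separately the two ranges $i \leq p-1$ and $i \geq p$, where the cokernel of $\sigma-1$ on $U^{2i}$ has qualitatively different generators. Part (ii) is then recovered by applying $\sigma - 1$ to suitable monomials of $W^s$: the resulting elements listed in the table are manifestly linearly independent (they are distinguished by leading monomials), and their count matches $\dim W^s - \dim (W^s)^Q$ by rank--nullity, so they form a basis of $(\sigma-1) W^s$.
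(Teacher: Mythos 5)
Your decomposition $W=U\oplus x_1U\oplus x_1y_1U\oplus y_1U$ with $U=K[x_2]\otimes\langle 1,y_2,\dots,y_2^{p-1}\rangle$, the observation that $\sigma$ acts unipotently on $U^{2i}$ with invertible subdiagonal entries $\binom{k}{k-1}=k$ for $k\le p-1$, and the rank--nullity bookkeeping are correct and constitute a complete argument; this is essentially the direct computation the paper delegates to Siegel's Proposition~3 (the hypotheses $\sigma^p\cdot\varphi=\varphi$ and $N(\sigma)\cdot\varphi=0$, valid for $n\ge 2$, are exactly what make the $n\ge 2$ case identical to Siegel's $n=1$ case). Your treatment of the extension $0\to x_1U\to x_1U\oplus y_1U\to y_1U\to 0$ is fine: injectivity of $(x_1U^{2i})_Q\to(W^{2i+1})_Q$ does follow from surjectivity of $(W^{2i+1})^Q\to(y_1U^{2i})^Q$ via the long exact sequence (using $H^1(Q,-)\cong(-)_Q$ since $N(\sigma)$ acts as zero), or more simply from $\dim V_Q=\dim V^Q$ for the single operator $\sigma-1$. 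One step as written would fail, though it is really a defect of the stated table rather than of your method: the elements listed in part~(ii) for odd $s=2i+1$ are \emph{not} linearly independent. The $k=0$ entries of the third line duplicate entries of the first line, and the second line lies in the span of the other two (for $1\le k\le\min(i,p-1)-1$ one has $k\,y_1x_2^{j+1}y_2^{k-1}=(x_1x_2^jy_2^k+k\,y_1x_2^{j+1}y_2^{k-1})-x_1x_2^jy_2^k$); e.g.\ for $i=2$ the row lists five elements of the four-dimensional space $(\sigma-1)W^5$. So replace ``manifestly linearly independent'' by: the listed elements span $(\sigma-1)W^s$ (each visibly lies in the image, and the images $(\sigma-1)(x_1f_k)$, $(\sigma-1)(y_1f_k)$ are in their span), and then extract the $2\min(i,p-1)$ independent ones matching the rank--nullity count. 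Similarly, do not be alarmed that your (correct) computation gives $\overline{y_1}$, not $\overline{x_1}$, as the basis of $W^1/(\sigma-1)W^1$, and that the row labels in part~(i) are interchanged relative to their contents; these are typos in the statement, corrected in the paper's subsequent Figure~1.
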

\begin{proof}
	The proof follows verbatim that of \cite[Proposition 3]{Siegel96}.
\end{proof}

Using this result, we can write a table with the basis elements of $D_2^{r,s}$:
\renewcommand{\arraystretch}{1.6}
\begin{small}\begin{figure}[H]
	\centering
	\begin{tabular}{r|c|c|} \hline
		$2i+1 \geq 2p+1$ & $x_1x_2^i, \ (x_1y_2-y_1x_2)x_2^{i-1}$ & $\overbar{x_1 x_2^{k-p+1} y_2^{p-1}}, \ \overbar{y_1 x_2^{k-p+1} y_2^{p-1}}$ \\ \hline
		$2i \geq 2p$ & $x_2^i, \ x_1y_1x_2^{i-1}$ & $\overbar{x_1y_1 x_2^{k-p}y_2^{p-1}}, \ \overbar{x_2^{k-p+1}y_2^{p-1}}$ \\ \hline
		$2i+1\leq 2p-1$	  & $x_1x_2^i, \ (x_1y_2-y_1x_2)x_2^{i-1}$ & $\overbar{x_1y_2^i},\  \overbar{y_1y_2^i}$ \\ \hline
		$2i\leq 2p-2$	  & $x_2^i, \ x_1y_1x_2^{i-1}$ & $\overbar{x_1y_1y_2^{i-1}},\  \overbar{y_2^i}$ \\ \hline
		$1$	  	  & $x_1$ & $\overbar{y_1}$ \\ \hline
		$0$	  	  & $1$ & $\bar 1$ \\ \hline 
		$s$ & $D_2^{2j,s}=(W^s)^{Q}$ & $D_2^{2j+1,s}=W^s/(\sigma-1)W^s$
	\end{tabular}
	\caption{Basis of $D_2^{r,s}$ for $r,s\geq 0$, with $j\geq 0$.}
\end{figure}\end{small}
\renewcommand{\arraystretch}{1}

\subsection{Multiplicative structure}

Following \cite[Section 4]{Siegel96} (see also \cite[Sections 3.2 and 7.3]{Evens91}) and using the diagonal approximation, we describe the multiplicative structure of $E_2$, that is, the bigraded algebra structure of $E_2$ over $K$. For $r,s,r',s'\geq 0$, let  $\varphi\in \Cohom^s(M)$ and $\varphi'\in \Cohom^{s'}(M)$ represent the elements $\bar \varphi\in E_2^{r,s}$ and $\bar\varphi'\in E_2^{r',s'}$, respectively. 
Then, their product in $E_2$ is the element $\bar\varphi \bar\varphi'\in E_2^{r+r',s+s'}$ with 
\begin{displaymath}
(-1)^{r's}\bar\varphi \bar\varphi' = 
\begin{dcases}
\overbar{\varphi\smallsmile\varphi'}, & \text{if } r \text{ or } r' \text{ is even}, \\
\sum_{0\leq i<j<p^n} \overbar{\sigma^i\cdot\varphi \smallsmile \sigma^j\cdot \varphi'}, & \text{if } r \text{ and } r' \text{ are odd}.
\end{dcases}
\end{displaymath}


\begin{lemma}\label{Lemm: Product E2 0}
                 Let $\bar \varphi\in E_2^{r,s}$ and $\bar\varphi'\in E_2^{r',s'}$ be as above with $r$ and $r'$ odd. Then, $\bar\varphi \bar\varphi' =0$.
\end{lemma}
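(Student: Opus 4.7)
The plan is to show that the sum
\[
T \;=\; \sum_{0 \leq i < j < p^n} \sigma^i \cdot \varphi \smallsmile \sigma^j \cdot \varphi'
\]
vanishes already as an element of $\HH^{s+s'}(M)$; this forces $\bar\varphi \bar\varphi' = 0$, since the sign $(-1)^{r's}$ in the product formula is a unit in $K$.

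The key input I will exploit is the identity $\sigma^p \cdot \psi = \psi$, valid for every $\psi \in \HH^\bullet(M)$ because $n \geq 2$ (recorded at the start of Section \ref{sec: SecondPage}). Thanks to this, the cup product $\sigma^i \cdot \varphi \smallsmile \sigma^j \cdot \varphi'$ depends only on the residues $a := i \bmod p$ and $b := j \bmod p$, so I can regroup:
\[
T \;=\; \sum_{0 \leq a, b \leq p-1} c(a, b)\,\bigl(\sigma^a \cdot \varphi \smallsmile \sigma^b \cdot \varphi'\bigr),
\]
where $c(a, b)$ counts the number of pairs $(i, j)$ with $0 \leq i < j < p^n$, $i \equiv a$ and $j \equiv b$ modulo $p$.

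It then remains to check that $c(a, b) \equiv 0 \pmod p$. Writing $i = a + p i'$ and $j = b + p j'$ with $0 \leq i', j' < p^{n-1}$, the inequality $i < j$ translates to $i' \leq j'$ when $a < b$ and to $i' < j'$ otherwise, so that $c(a, b)$ equals either $\binom{p^{n-1} + 1}{2}$ or $\binom{p^{n-1}}{2}$. Both admit the factorization $p^{n-1} \cdot \tfrac{p^{n-1} \pm 1}{2}$, and since $n \geq 2$ the factor $p^{n-1}$ vanishes in $K$; hence $c(a, b) = 0$ in $K$ for every pair $(a, b)$, and $T = 0$.

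I do not foresee any genuine obstacle here: the argument is essentially a counting observation, using that $Q$ acts on $\HH^\bullet(M)$ through its quotient of order $p$ and that, consequently, all multiplicities $c(a,b)$ in the regrouped sum carry a factor of $p^{n-1}$. The hypothesis $n \geq 2$ is used once, in the simplification $\sigma^p = \mathrm{id}$; the analogous statement for $n = 1$ is the one settled in \cite{Siegel96}.
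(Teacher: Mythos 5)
Your argument is correct and is essentially the paper's own proof in slightly different packaging: the paper also reduces everything modulo $p$ via $\sigma^p\cdot\psi=\psi$ and then checks that the resulting multiplicities (there organized through the partial norm elements $N_j(\sigma)$, yielding the coefficients $p^{n-1}$ and $p^{n-1}(p^{n-1}-1)/2$) vanish in $K$ because $n\geq 2$. Your direct count of the pairs $(i,j)$ by residue classes, giving $c(a,b)=\binom{p^{n-1}+1}{2}$ or $\binom{p^{n-1}}{2}$, is the same divisibility observation, so nothing further is needed.
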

\begin{proof}
	For simplicity, write
	$$N_0(\sigma)=0,\quad \text{and for } k\geq 1, \; N_k(\sigma)=\sum_{i=0}^{k-1}\sigma^i.$$
	In particular, we have that $N(\sigma)=N_{p^n}(\sigma)$. 
	Furthermore, note that, for $0\leq i\leq p-1$ and $k\geq 1$, we have that
	$$\sigma^{i+kp}\cdot \varphi=\sigma^i\cdot \varphi \; \text{ and }\; N_{i+kp}(\sigma)\cdot \varphi =N_{i}(\sigma)\cdot \varphi +kN_{p}(\sigma)\cdot \varphi.$$ 
	Then, we compute 
\begin{small}	\begin{align*}
	\sum_{0\leq i<j<p^n} \sigma^i\cdot \varphi \smallsmile \sigma^j\cdot \varphi' &= \sum_{j=0}^{p^n-1} N_{j}(\sigma)\cdot \varphi \smallsmile \sigma^j\cdot \varphi'= \sum_{j=0}^{p-1}\Bigg(\sum_{k=0}^{p^{n-1}-1}N_{j+kp}(\sigma)\Bigg) \cdot \varphi \smallsmile \sigma^j\cdot \varphi' \\
	& = \sum_{j=0}^{p-1}\Bigg(\sum_{k=0}^{p^{n-1}-1}N_{j}(\sigma)+kN_{p}(\sigma)\Bigg)\cdot  \varphi \smallsmile \sigma^j\cdot \varphi'\\
	& = \sum_{j=0}^{p-1}\bigg(p^{n-1}N_{j}(\sigma)+p^{n-1}\frac{(p^{n-1}-1)}{2}N_{p}(\sigma)\bigg)\cdot \varphi \smallsmile \sigma^j\cdot \varphi' =0 
	\end{align*}\end{small}
	As a consequence, $\bar\varphi \bar\varphi' =0$ in $E_2$.
\end{proof}

In order to describe the multiplicative structure of $E_2$, we fix the following notation.
\begin{gather*}
\lambda_1=x_1\in E_2^{0,1}, \qquad \lambda_2=x_2\in E_2^{0,2},\\
\nu_2=x_1y_1\in E_2^{0,2}, \qquad \nu_3=x_1y_2-y_1x_2\in E_2^{0,3}, \qquad \nu_{2p}=z_{2p}\in E_2^{0,2p},\\
\gamma_1=\bar{1}\in E_2^{1,0}, \qquad \gamma_2=\bar{1}\in E_2^{2,0}, \\
\text{for }1\leq i\leq p, \qquad \mu_{2i}=\overbar{y_1y_2^{i-1}}\in E_2^{1,2i-1}, \\
\text{for }1\leq i \leq  p-1, \qquad \mu_{2i+1}=\overbar{y_2^{i}}\in E_2^{1,2i}.
\end{gather*}

\begin{proposition} \label{Prop: Multiplication generators E2}
	Multiplication by the elements $\nu_{2p},\gamma_2, \lambda_2$ induces vector space homomorphisms as follows:
	\begin{enumerate}
		\item[(i)] Multiplication $\cdot\nu_{2p}\colon E_2^{r,s}\longrightarrow E_2^{r,s+2p}$ is injective for all $r,s\geq 0$.
		
		\item[(ii)] Multiplication $\cdot\gamma_2\colon E_2^{r,s}\longrightarrow E_2^{r+2,s}$ is  an isomorphism for all $r,s\geq 0$.

		\item[(iii)] Multiplication $\cdot\lambda_2\colon D_2^{r,s}\longrightarrow D_2^{r,s+2}$ is 
		an isomorphism 
		for all $s\geq 2p-1$.
	\end{enumerate}
\end{proposition}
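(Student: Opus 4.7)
The plan is to treat the three statements separately, exploiting the tensor product decomposition \eqref{eq: E2tensorproduct} for part (i), the product formula in $E_2$ for part (ii), and an explicit bijection on bases (via Proposition \ref{Prop: Basis D2}) for part (iii).

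For (i), the equality $E_2^{r,\bullet}=K[z_{2p}]\otimes_K D_2^{r,\bullet}$ exhibits $E_2^{r,\bullet}$ as a free module over the polynomial ring $K[z_{2p}]=K[\nu_{2p}]$. Hence multiplication by $\nu_{2p}$ is multiplication by the polynomial variable on a free module, which is injective. For (ii), the element $\gamma_2=\bar 1$ is represented by $1\in\Cohom^0(M)$, and since its horizontal degree $r'=2$ is even, the product formula gives
\[
\bar\varphi\cdot\gamma_2=(-1)^{2s}\,\overline{\varphi\smallsmile 1}=\bar\varphi\in E_2^{r+2,s}
\]
for any $\bar\varphi\in E_2^{r,s}$. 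Under the description of $E_2^{r,s}$ as either $(\Cohom^s(M))^Q$ or $\Cohom^s(M)/(\sigma-1)\Cohom^s(M)$, which depends only on the parity of $r$, this is exactly the canonical identification $E_2^{r,s}\xrightarrow{\sim} E_2^{r+2,s}$.

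For (iii), I would match the bases of $D_2^{r,s}$ and $D_2^{r,s+2}$ given in the table following Proposition \ref{Prop: Basis D2} for each of the four parity combinations of $(r,s)$. For instance, when $r$ is even and $s=2i\geq 2p$, multiplication by $\lambda_2=x_2$ sends $x_2^i\mapsto x_2^{i+1}$ and $x_1 y_1 x_2^{i-1}\mapsto x_1 y_1 x_2^i$; when $r$ is odd and $s=2i+1\geq 2p+1$, it sends $\overline{x_1 x_2^{i-p+1} y_2^{p-1}}\mapsto\overline{x_1 x_2^{i-p+2} y_2^{p-1}}$, and analogously for the $y_1$-variant. In each case the assignment is a bijection between basis vectors, yielding the desired isomorphism.

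The main subtlety, which I expect to be the principal obstacle, is the boundary $s=2p-1$: in the odd-$r$ column the basis shape changes between $s=2p-1$ and $s=2p+1$, from $\{\overline{x_1 y_2^{p-1}}, \overline{y_1 y_2^{p-1}}\}$ to $\{\overline{x_1 x_2 y_2^{p-1}}, \overline{y_1 x_2 y_2^{p-1}}\}$. One must check that multiplication by $x_2$ still matches these two descriptions consistently, which is immediate from the tables but requires verifying that the hypothesis $s\geq 2p-1$ (rather than, say, $s\geq 2p+1$) is indeed the correct threshold. Note also that the corresponding statement for $s < 2p-1$ fails, since $\lambda_2$ is not yet surjective at those heights (e.g.\ $\overline{y_2^i}$ is not in the image of $\cdot \lambda_2$ for small $i$), so the sharpness of the threshold is what makes (iii) non-automatic.
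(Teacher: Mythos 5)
Your proposal is correct and follows essentially the same route as the paper's (very terse) proof: part (i) from the freeness of $E_2^{r,\bullet}$ over $K[\nu_{2p}]$ given by Equation \eqref{eq: E2tensorproduct}, part (ii) from the observation that $\gamma_2=\bar 1$ acts as the identity under the identifications of Proposition \ref{Prop: Basis D2}, and part (iii) by matching the bases of $D_2^{r,s}$ and $D_2^{r,s+2}$, with the threshold $s\geq 2p-1$ being exactly where the basis shapes stabilise. Your added remarks on the boundary case and on the failure for $s<2p-1$ are accurate and go slightly beyond what the paper records.
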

\begin{proof} The first claim follows from Equation \eqref{eq: E2tensorproduct}. Using the identifications in Proposition \ref{Prop: Basis D2}, note that multiplication by $\gamma_2=\bar{1}$ is simply the identity homomorphism and so, the second item holds. The last statement is clear by the description of the bases in Proposition \ref{Prop: Basis D2}
%
%
\end{proof}

Using the previous results, we can deduce the multiplicative structure of $E_2$.

\begin{theorem}\label{Thm: Structure E2}
	The structure of the second page can be described as follows:
	\begin{enumerate}
		\item [(i)]The graded commutative algebra structure of the zeroth column is given by the following tensor product:
		\begin{displaymath}
		E_2^{0,\bullet}=K[\nu_{2p}]\otimes K[\lambda_1,\lambda_2,\nu_2,\nu_3]/(\nu_2^2,\lambda_1\nu_2,\nu_2\nu_3,\lambda_1\nu_3+\lambda_2\nu_2).
		\end{displaymath}
		\item[(ii)] For $r=0,1$ and $s\geq 0$, the basis elements of $D_2^{r,s}$ are the following:
		\renewcommand{\arraystretch}{1.4}
	\begin{small}	\begin{figure}[H]
			\centering
			\begin{tabular}{r|c|c|}\hline
				$2i+1\geq 2p+1$ & $\lambda_1\lambda_2^i,\quad \lambda_2^{i-1}\nu_3$ & $\lambda_2^{i-p+1}\mu_{2p},\quad \lambda_1\lambda_2^{i-p+1}\mu_{2p-1}$ \\ \hline
				$2i\geq 2p$ & $\lambda_2^i,\quad \lambda_2^{i-1}\nu_2$ & $\lambda_1\lambda_2^{i-p}\mu_{2p},\quad \lambda_2^{i-p+1}\mu_{2p-1}$\\ \hline
				$3\leq 2i+1<2p$ & $\lambda_1\lambda_2^i,\quad \lambda_2^{i-1}\nu_3$ & $\mu_{s+1},\quad \lambda_1\mu_s$ \\ \hline
				$3\leq 2i<2p$ & $\lambda_2^i, \quad\lambda_2^{i-1}\nu_2$ & $\mu_{s+1},\quad \lambda_1\mu_s$ \\ \hline
				$1$ & $\lambda_1$ & $\mu_2$ \\ \hline
				$0$ & 1 & $\gamma_1$ \\ \hline
				$s$ & $D_2^{0,s}$ & $D_2^{1,s}$ 
			\end{tabular}
		\end{figure}\end{small}\noindent For $r\geq 2$ and $s\geq 0$, we have that $D_2^{r,s}=D_2^{r-2,s}\gamma_2$. 

		\item[(iii)] We can write $E_2=K[\nu_{2p}]\otimes D_2$. Furthermore, $E_2$ is generated by the elements  $$\lambda_1,\lambda_2, \nu_2,\nu_3,\nu_{2p}, \gamma_1,\gamma_2,\mu_2,\dotsc,\mu_{2p}.$$ 
	\end{enumerate}
\end{theorem}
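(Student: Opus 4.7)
The plan is to prove the three parts in sequence, using Proposition \ref{Prop: Basis D2}, Proposition \ref{Prop: Multiplication generators E2}, Lemma \ref{Lemm: Product E2 0}, and the product formula for $E_2$.

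For part (i), I first verify the four listed relations by direct computation inside the graded commutative ring $\HH^{\bullet}(M) = \Lambda(x_1,y_1) \otimes K[x_2,y_2]$. Indeed, $\nu_2^2 = (x_1 y_1)^2 = 0$ because $x_1^2 = y_1^2 = 0$; $\lambda_1 \nu_2 = x_1 \cdot x_1 y_1 = 0$; $\nu_2 \nu_3 = x_1 y_1 (x_1 y_2 - y_1 x_2) = 0$; and $\lambda_1 \nu_3 + \lambda_2 \nu_2 = -x_1 y_1 x_2 + x_2 \cdot x_1 y_1 = 0$, since the even-degree class $x_2$ commutes past $x_1 y_1$. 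Together with the implicit relations $\lambda_1^2 = \nu_3^2 = 0$ coming from the graded-commutative free algebra convention on odd generators, these produce a $K$-basis of the quotient consisting of monomials $\lambda_2^i \nu_{2p}^k$, $\lambda_1 \lambda_2^i \nu_{2p}^k$, $\nu_2 \lambda_2^{i-1} \nu_{2p}^k$, and $\nu_3 \lambda_2^{i-1} \nu_{2p}^k$. To conclude that no further relations hold, I match this monomial basis, degree by degree, with the invariant basis of $(W^s)^Q$ from Proposition \ref{Prop: Basis D2}(i), combined with the tensor decomposition $E_2^{0,\bullet} = K[\nu_{2p}] \otimes W^Q$ coming from equation \eqref{eq: E2tensorproduct}.

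For part (ii), the row $D_2^{0,s}$ is simply the restatement of part (i)'s basis in each fixed degree. For the row $D_2^{1,s} = W^s/(\sigma-1)W^s$, I rewrite the bases in Proposition \ref{Prop: Basis D2}(iii) using the chosen names: by definition $\mu_{2i} = \overbar{y_1 y_2^{i-1}}$ and $\mu_{2i+1} = \overbar{y_2^i}$, and the product formula with one even-degree factor gives $\lambda_1 \mu_{2i} = \overbar{x_1 y_1 y_2^{i-1}}$ and $\lambda_1 \mu_{2i+1} = \overbar{x_1 y_2^i}$, recovering the full basis of $D_2^{1,s}$ in each degree (and in particular verifying that the classes $\overbar{x_2^j}$ vanish for $j \geq 1$, as predicted by Proposition \ref{Prop: Basis D2}(ii)). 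The claim $D_2^{r,s} = D_2^{r-2,s}\gamma_2$ for $r \geq 2$ is immediate from Proposition \ref{Prop: Multiplication generators E2}(ii), since $\cdot \gamma_2$ is an isomorphism $E_2^{r-2,s} \xrightarrow{\sim} E_2^{r,s}$.

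For part (iii), the decomposition $E_2 = K[\nu_{2p}] \otimes D_2$ is exactly equation \eqref{eq: E2tensorproduct}. Generation then follows: the zeroth column is handled by (i); the column $r=1$ is handled by the identifications of (ii) in terms of $\lambda_1$ and the $\mu_i$; higher rows are reached by iterated multiplication by $\gamma_2$, while odd rows $r \geq 3$ are produced by combining $\gamma_2$ with $\gamma_1$-multiplication (with $\gamma_1 = \bar 1 \in E_2^{1,0}$ acting on $D_2^{0,s}$ to reach $D_2^{1,s}$). Note that $\gamma_1$ must be included as an independent generator because Lemma \ref{Lemm: Product E2 0} forces $\gamma_1^2 = 0$, so $\gamma_2$ cannot be recovered from $\gamma_1$. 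I expect the main obstacle to be the bookkeeping in part (i), namely confirming that no additional relations are hidden in $W^Q$; this is resolved by the degree-by-degree comparison of monomial bases with Proposition \ref{Prop: Basis D2}(i), at which point parts (ii) and (iii) are essentially a translation of notation combined with Proposition \ref{Prop: Multiplication generators E2}.
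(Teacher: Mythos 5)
Your route is the one the paper intends (the paper's proof is a one-line citation of Proposition \ref{Prop: Basis D2}, Proposition \ref{Prop: Multiplication generators E2} and Siegel's Corollary 4(iii)): check the relations in $\Hn{\bullet}(M)$, match monomial bases degree by degree against Proposition \ref{Prop: Basis D2}(i), translate the bases of $W^s/(\sigma-1)W^s$ into the named classes via the cup-product formula, and propagate along rows with $\gamma_2$. Parts (i) and (ii) are essentially fine (modulo an immaterial sign: the product formula actually gives $\lambda_1\mu_{2i}=-\overbar{x_1y_1y_2^{i-1}}$, and for $s\geq 2p$ the identification of the column-one basis as $\lambda_2^{j}\mu_{2p}$, $\lambda_2^{j}\mu_{2p-1}$ needs the additional, but entirely analogous, computation $\lambda_2^{j}\mu_{2p}=\overbar{x_2^{j}y_1y_2^{p-1}}$, which your two displayed formulas do not cover).

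The genuine error is the parenthetical in part (iii) asserting that multiplication by $\gamma_1$ carries $D_2^{0,s}$ onto $D_2^{1,s}$. Under the identifications of Proposition \ref{Prop: Basis D2}, the map $\cdot\gamma_1\colon E_2^{0,s}\to E_2^{1,s}$ is induced by the quotient $(W^s)^Q\to W^s/(\sigma-1)W^s$, and this is very far from surjective: for every $i\geq 2$ both invariants $x_2^i$ and $x_1y_1x_2^{i-1}$ spanning $(W^{2i})^Q$ already lie in $(\sigma-1)W^{2i}$ by Proposition \ref{Prop: Basis D2}(ii), so $\cdot\gamma_1\colon D_2^{0,2i}\to D_2^{1,2i}$ is the zero map while $D_2^{1,2i}\neq 0$. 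This failure is precisely why the classes $\mu_2,\dotsc,\mu_{2p}$ must be adjoined as generators; if your parenthetical were true, none of them would be needed. Your argument nevertheless survives, because you separately (and correctly) generate the column $r=1$ from $\gamma_1,\lambda_1,\lambda_2$ and the $\mu_i$ via part (ii), and the odd columns $r\geq 3$ are then obtained from column $1$ by multiplying with powers of $\gamma_2$ using Proposition \ref{Prop: Multiplication generators E2}(ii); the parenthetical should simply be deleted. (Your closing remark is also slightly garbled: $\gamma_1^2=0$ from Lemma \ref{Lemm: Product E2 0} is the reason $\gamma_2$, not $\gamma_1$, must be listed as an independent generator; $\gamma_1$ is needed because nothing else hits $E_2^{1,0}$.)
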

\begin{proof}
	The first statement can be obtained as in \cite[Corollary 4 (iii)]{Siegel96} and the remaining assertions follow from Propositions \ref{Prop: Basis D2} and \ref{Prop: Multiplication generators E2}.
\end{proof}

We encapsulate the previous result in the following table:

\renewcommand{\arraystretch}{1.9}
\begin{small}\begin{figure}[H]
	\centering
	\begin{tabular}{r|c|c|c|
		}
		\hline
		$2p$	& $\boxed{\vphan\nu_{2p}} \quad \lambda_2^p \quad \lambda_2^{p-1}\nu_2$ & ${\nu_{2p}\gamma_1}\quad {\lambda_1\mu_{2p}}\quad {\lambda_2\mu_{2p-1}}$ & ${\nu_{2p}\gamma_2}\quad {\lambda_2^p\gamma_2} \quad {\lambda_2^{p-1}\nu_2\gamma_2}$ 
		\\ \hline
		$2p-1$	& ${\lambda_1\lambda_2^{p-1}} \quad {\lambda_2^{p-2}\nu_3}$ & $\boxed{\vphan\mu_{2p}}\quad {\lambda_1\mu_{2p-1}}$ & ${\lambda_1\lambda_2^{p-1}\gamma_2} \quad {\lambda_2^{p-2}\nu_3\gamma_2}$ 
		\\ \hline
		$2p-2$	& ${\lambda_2^{p-1}}\quad {\lambda_2^{p-2}\nu_2}$ & $\boxed{\vphan\mu_{2p-1}}\quad{\lambda_1\mu_{2p-2}}$ & ${\lambda_2^{p-1}\gamma_2}\quad {\lambda_2^{p-2}\nu_2\gamma_2}$ 
		\\ \hline 
		$\vdots$	& $\vdots$ & $\vdots$ & $\vdots$ 
		\\ \hline 
		4	& ${\lambda_2^2}\quad {\lambda_2\nu_2}$ & $\boxed{\vphan\mu_5}\quad{\lambda_1\mu_4}$ & ${\lambda_2^2\gamma_2}\quad {\lambda_2\nu_2\gamma_2}$ 
		\\ \hline 
		3	& $\boxed{\vphan\nu_3} \quad {\lambda_1\lambda_2}$ & $\boxed{\vphan\mu_4}\quad{\lambda_1\mu_3}$ & ${\nu_3\gamma_2}\quad {\lambda_1\lambda_2\gamma_2}$ 
		\\ \hline
		2	& $\boxed{\vphan\lambda_2} \quad \boxed{\vphan\nu_2}$ & $\boxed{\vphan\mu_3} \quad {\lambda_1\mu_2}$ & ${\lambda_2\gamma_2} \quad {\nu_2\gamma_2}$ 
		\\ \hline
		1	& $\boxed{\vphan\lambda_1}$ & $\boxed{\vphan\mu_2}$ & ${\lambda_1\gamma_2}$ 
		\\ \hline
		0	& ${1}$ & $\boxed{\vphan\gamma_1}$ & $\boxed{\vphan\gamma_2}$ 
		\\ \hline
		& 0 & 1 & 2 
	\end{tabular}
	\caption{Basis elements of $E_2^{r,s}$ for $0\leq r\leq 2$ and $0\leq s\leq 2p$, with the multiplicative generators highlighted.}
\end{figure}\end{small}
\renewcommand{\arraystretch}{1}

\begin{remark}
 	In \cite[Corollary 4]{Siegel96}, using analogous notation to ours, Siegel obtains that the multiplicative generators of $E_2(\Heis(p))$ are $\lambda_1,\lambda_2, \nu_2,\nu_3,\nu_{2p}, \gamma_1,\gamma_2,\mu_2,\dotsc,\mu_{2p-3}$. 
\end{remark}



\section{Non-direct second differential computations}\label{sec: survivetoE3} 

In this section, we use restriction, inflation and the norm maps to  determine some of the generators of $E_2$ that survive to the infinity page $E_{\infty}$. We fix the following notation: the inclusion homomorphism $M \hooklongrightarrow G$ induces the restriction map 
$$
\res_{G\rightarrow M}\colon \HH^{\bullet}(G)\longrightarrow \HH^{\bullet}(M)
$$
in cohomology and, by a slight abuse of notation, we also write $\res_{G\to M}$ to denote the composition $\HH^{\bullet}(G)\longrightarrow \HH^{\bullet}(M)\longrightarrow \HH^{\bullet}(M)^Q$. 

\begin{proposition}
	The elements $\lambda_1,\lambda_2,\nu_2,\nu_3,\nu_{2p},\gamma_1,\gamma_2,\mu_2,\mu_3$ survive to $E_{\infty}$.
\end{proposition}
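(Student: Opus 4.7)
My plan is to handle the nine listed classes by three different functorial tools. The splitting $Q\hookrightarrow G$ equips inflation $H^\bullet(Q)\to H^\bullet(G)$ with a retraction, placing $H^\bullet(Q)$ entirely inside $E_\infty^{\bullet,0}$ and showing that $\gamma_1,\gamma_2$ survive. Since $[G,G]=\langle b\rangle$, the quotient $G\twoheadrightarrow G/\langle b\rangle\cong C_{p^n}\times C_{p^n}$ pulls back the classes $\bar{a}^*$ and $\beta_n(\bar{a}^*)$ to elements of $H^\bullet(G)$ whose restrictions to $M$, via the composition $M\twoheadrightarrow M/\langle b\rangle\hookrightarrow C_{p^n}\times C_{p^n}$, equal $x_1=\lambda_1$ and $x_2=\lambda_2$; this places $\lambda_1,\lambda_2$ in the image of $\res_{G\to M}$ and hence in $E_\infty^{0,\bullet}$.

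For $\mu_2=\overline{y_1}$ and $\mu_3=\overline{y_2}$, I would use the abelian subgroup $H:=\langle\sigma,b\rangle\cong C_{p^n}\times C_{p^n}$, which is a subgroup because $b$ is central in $G$. Setting $\langle b\rangle=H\cap M$ and $H/\langle b\rangle=\langle\sigma\rangle=Q$, the inclusion $H\hookrightarrow G$ is a morphism of extensions and thereby induces a map of LHS spectral sequences $\res\colon E(G)\to E(H)$. Since $H$ is a direct product with trivial action, $E(H)$ collapses at $E_2$. Each $\mu_i$ maps to the nonzero class $\overline{b^*}$ or $\overline{\beta_n(b^*)}$ in $E_2^{1,\bullet}(H)$; an inspection of the tables in Section~\ref{sec: SecondPage} shows that the target of each potentially nonzero differential $d_r(\mu_i)$ is one-dimensional and detected injectively by $\res$. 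Naturality of $d_r$ together with the vanishing of all differentials in $E(H)$ then forces $d_r(\mu_2)=d_r(\mu_3)=0$ for every $r\geq 2$.

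It remains to produce explicit preimages in $H^\bullet(G)$ under $\res_{G\to M}$ for $\nu_{2p},\nu_2,\nu_3$. For $\nu_{2p}$, I would apply the Evens multiplicative transfer $\mathrm{Nm}_L^G$, where $L:=\langle M,\sigma^p\rangle$ has index $p$ in $G$: the image of $b$ in $L^{\mathrm{ab}}$ is nontrivial, providing a class $\widetilde{y_1}\in H^1(L)$ that restricts to $y_1$, and hence $\beta_n(\widetilde{y_1})\in H^2(L)$ restricts to $y_2$. The double-coset formula, combined with the fact that $\sigma^p$ acts trivially on $H^\bullet(M)$, then yields
\[
\res_{G\to M}\bigl(\mathrm{Nm}_L^G(\beta_n(\widetilde{y_1}))\bigr)=\prod_{i=0}^{p-1}\sigma^i\cdot y_2=z_{2p}=\nu_{2p}.
\]
For $\nu_2=x_1y_1$, extend $a^*,b^*$ to set-theoretic maps $G\to K$ and form the 2-cochain $c(g_1,g_2):=a^*(g_1)b^*(g_2)$; a direct computation gives $\delta c=a^*\smile a^*\smile\bar{\sigma}^*$, which is a coboundary since $(a^*)^2=0$ in $H^2(G;K)$ for $p$ odd. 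Subtracting the corresponding coboundary produces a 2-cocycle on $G$ whose restriction to $M\times M$ remains $x_1y_1$. Finally, $\beta_n(\nu_2)=-\nu_3$ in $H^\bullet(M)$, so the natural Bockstein of the lift just constructed provides a class in $H^3(G)$ restricting to $\nu_3$. The principal obstacle is the construction for $\nu_2$: since $y_1$ does not extend to a homomorphism $G\to K$ (as $b\in[G,G]$), no direct cup product suffices, and the failure of cocyclicity must be corrected using the vanishing of $(a^*)^2$ in odd characteristic.
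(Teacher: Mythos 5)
Your arguments for $\gamma_1,\gamma_2$ (the splitting makes inflation from $Q$ split injective, so the bottom row survives), for $\lambda_1,\lambda_2$ (pullback from $G/[G,G]\cong C_{p^n}\times C_{p^n}$ followed by restriction to $M$), and for $\mu_2,\mu_3$ (restriction to the abelian subgroup $\langle\sigma,b\rangle$, whose LHS spectral sequence collapses, plus the fact that the relevant targets are detected injectively) are correct; these are minor variants of the paper's arguments, which use an integral lift and the Bockstein for $\lambda_2$ and the larger subgroup $Q\ltimes(C_{p^n}^p\times C_{p^n})$ for $\mu_3$. Your cochain-level construction for $\nu_2$ (correcting $a^*\smallsmile b^*$ by a coboundary using $(a^*)^2=0$ for $p$ odd) also works and is genuinely different from the paper, which instead inflates $\tilde\nu_2$ from $E(\Heis(p))$ and invokes Siegel's computation that it survives there; your route is more self-contained.

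There are, however, two genuine gaps. For $\nu_{2p}$ you follow the same norm argument as the paper, with $L=\langle M,\sigma^p\rangle$, but your construction of the input class fails: since $[a,\sigma^p]=b^p$, we have $b^p\in[L,L]$, so the image of $b$ in $L^{\mathrm{ab}}$ has order $p$ and no homomorphism $L\to\Z/p^n\Z$ restricts to the standard $\Z/p^n$-lift of $y_1$ on $M$. Concretely, $\beta_1(\widetilde{y_1})\neq 0$ in $H^2(L)$, so the $n$-th Bockstein $\beta_n(\widetilde{y_1})$ is not defined, and no Bockstein of $\widetilde{y_1}$ restricts to $y_2=\beta_n(y_1)$. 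The paper instead obtains a class $\widetilde{y_2}\in H^2(L)$ with $\res_{L\to M}(\widetilde{y_2})=y_2$ from the collapse of $E(L)$ at $E_2$; you need that (or some substitute) before the double-coset computation can be run. Secondly, for $\nu_3$ the phrase ``the natural Bockstein of the lift'' conceals the same kind of problem: to apply $\beta_n$ to your class $[z]\in H^2(G)$ restricting to $\nu_2$, you must know that $[z]$ survives to the $n$-th page of the Bockstein spectral sequence of $G$, i.e.\ that $\beta_1[z]=\dots=\beta_{n-1}[z]=0$ in $H^3(G)$ and not merely that their restrictions to $M$ vanish; this is not checked for the particular cocycle you built. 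Note that the paper does not handle $\nu_3$ by a restriction-image argument at all: it computes $\dd_2(\nu_3)=0$ directly via the chain map $\tau$ of Theorem \ref{Thm: Charlap Vasquez cyclic} (Proposition \ref{prop: d2 nu3}) and kills $\dd_3(\nu_3)$ with the automorphism $\Phi$ (Proposition \ref{Prop: nu3 infinity}), the higher differentials landing in the surviving bottom row.
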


\begin{proof}
It is clear that $\gamma_1,\gamma_2\in E_{\infty}$.  Since the extension \eqref{eq: splitextension} splits, the image of the second differential on $E_2^{\bullet, 0}$ is trivial. Consequently, $\lambda_1,\mu_2\in E_{\infty}$. 


For $\lambda_2=\beta_n(\lambda_1)\in E_2^{0,2}=\HH^2(M)^Q$, consider the map $\pi\colon H^1(G;\Z/p^n\Z)\to H^1(G)$ and let $\tilde\lambda_1\in \HH^1(G; \Z/p^nZ)$ be such that 
$\pi(\tilde\lambda_1)=\lambda_1$. It can be readily checked that $\lambda_1= \res_{G\to M}\circ\; \pi(\tilde\lambda_1)$ and thus, 
$$
\lambda_2=(\beta_n \circ \res_{G \to M} \circ \;\pi) (\tilde\lambda_1)=\res_{G\to M} \circ \; \beta_n (\tilde\lambda_1).
$$
This yields that $\lambda_2 \in \text{Im}(\res_{G\to M}) =E_{\infty}^{0,2}$ .


For $\nu_2$, consider the inflation homomorphism $\inf\colon E_2(\Heis(p))\longrightarrow E_2$. In particular, for  $\tilde \nu_2\in E_2^{0,2}(\Heis(p))$ defined analogously to $\nu_2$ (see \cite[Corollary 4]{Siegel96}, where Siegel uses $y_2$), we have that $\nu_2=\inf(\tilde\nu_2)$. By \cite[Theorem 5]{Siegel96}, $\tilde\nu_2\in E_{\infty}(\Heis(p))$, and since the inflation map commutes with differentials, we conclude that $\nu_2\in E_{\infty}$.

 
For  $\mu_3$, consider the subgroup $H=Q\ltimes(C_{p^n}^p\times C_{p^n})$ of $G$. The action of $\sigma$ on $$\HH^{\bullet}(C_{p^n}^p\times C_{p^n})=\Lambda(w_1,\tilde y_1)\otimes  K[w_2,\tilde y_2]$$ is trivial, and so 
$$
E_2(H)=\HH^{\bullet}(Q)\otimes \HH^{\bullet}(C_{p^n}^p\times C_{p^n})=E_{\infty}(H).
$$
The restriction homomorphism $\res_{G\to H}\colon E_2\longrightarrow E_2(H)$ then sends $\mu_3=\bar y_2$ to $$\res_{G\to H}(\mu_3)=\tilde y_2\gamma_1\not=0.$$ Furthermore, $\dd_2(\mu_3)\in \gen{\mu_2\gamma_2}$ and $\res_{G\to H}(\mu_2\gamma_2)=\tilde y_2\gamma_1\gamma_2\not=0.$ Nevertheless, we have that $\dd_2(y_2\gamma_1)=0$ and, as a consequence, $\dd_2(\mu_3)=0$. Hence, $\mu_3\in E_{\infty}$.

Finally, we will study  the generator $\nu_{2p}$. The subgroup $L=C_{p^n}^p\ltimes M$ of $G$ 
		is normal, and so we have that $L\backslash G/M=G/LM=G/L$. Applying the properties in
		\cite[Theorem 6.1.1]{Evens91} of the Evens norm map $\norm$, we obtain that, for any $\varphi\in \HH^{\bullet}(M)$, 
		\begin{align*}
		\res_{G\to M}\big(\norm_{L\to G}(\varphi)\big) &= \prod_{g\in G/L}\norm_{M\to M}\big(\res_{G\to L}(g\cdot\varphi)\big) = \prod_{g\in G/L}\res_{L\to M}(g\cdot\varphi) \\
		& = \prod_{g\in G/L}g\cdot \res_{L\to M}(\varphi).
		\end{align*}
		Moreover, since the action of $\sigma^p$ on  $\HH^{\bullet}(M)=\Lambda(\tilde x_1,\tilde y_1)\otimes K[\tilde x_2,\tilde y_2]$ is again trivial, we have that 
		\begin{displaymath}
		E_2(L)=\HH^{\bullet}(C_{p^n}^p)\otimes \HH^{\bullet}(M)=E_{\infty}(L),
		\end{displaymath}
		and we can write $y_2=\res_{L\to M}(\tilde y_2)$. 
		Therefore, 
		\begin{displaymath}
		\nu_{2p}=z_{2p}=\prod_{g\in C_p} g \cdot y_2 = \res_{G\to M}\big(\norm_{L\to G}(\tilde y_2)\big)
		\end{displaymath}
		and we deduce that $\nu_{2p}\in E_{\infty}$.

\end{proof}

\section{Generalisation of Siegel's result}\label{sec: SiegelGeneralization}

In this section, we explicitly compute the image of the second differential on the remaining generators of $E_2$. To that aim, we employ a generalization of Siegel's result \cite[Corollary 2]{Siegel96}, which is derived from a theorem by Charlap and Vasquez \cite{Charlap69}. To avoid technicalities in the current section, we collect most of the details and computations of the proof of Theorem \ref{Thm: Charlap Vasquez cyclic} in Appendix \ref{Appendix: Siegel}.


We introduce the necessary notation to state our result. Let $P_{\bullet} \longrightarrow K$ be the minimal projective $KM$-resolution and let $V$ be a $KG$-module with trivial $M$ action. Furthermore, for each $g\in Q$, write $P_{\bullet}^g$ for the $K M$-complex with underlying $K$-complex $P_{\bullet}$ and $M$-action given as follows:  for $h\in M$ and $x\in P_{\bullet}$, we set $h\cdot x=h^{g^{-1}}$. Also, for every $i\in \N$, we write $\Hom_{KM}(P_{\bullet},P_{\bullet})_i$ to denote $\prod_{k=0}^{i}\Hom_{KM}(P_{k},P_{k+i})$.

\begin{theorem}\label{Thm: Charlap Vasquez cyclic}
	Let $\alpha\colon P_{\bullet}\longrightarrow P_{\bullet}^{\sigma^{-1}}$ be a $K M$-chain map commuting with the augmentation, and $\tau\in \Hom_{K M}(P_{\bullet},P_{\bullet})_1$ such that $\partial \tau+\tau\partial =1-\alpha^{p^n}$. Suppose that $\zeta\in E_2^{r,s}$ with $r\geq0,s\geq1$ is represented by $f\in \Hom_{K M}(P_s,V)$. Then, $\dd_2(\zeta)$ is represented by $(-1)^rf\circ \tau$.
\end{theorem}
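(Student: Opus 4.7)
The plan is to follow the strategy in the proof of Siegel's \cite[Corollary 2]{Siegel96}, which is itself based on the construction of Charlap--Vasquez \cite{Charlap69}, with $p$ replaced by $p^n = |Q|$ throughout. The key object to build is an explicit free $KG$-resolution of $K$ out of the data $(P_\bullet, \alpha, \tau)$ together with the minimal periodic $KQ$-resolution $F_\bullet$ of $K$; the differential on this ``twisted tensor product'' resolution will encode the action of $\sigma$ on $P_\bullet$ via the chain map $\alpha$ and use $\tau$ to correct for the fact that $\alpha^{p^n}$ is only homotopic, not equal, to the identity.

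Concretely, I would first form a $KG$-complex with underlying graded module $X_\bullet = F_\bullet \otimes_K P_\bullet$, equipped with a total differential built from the differentials of $F_\bullet$ and $P_\bullet$, the twist $\alpha$, and a correction term involving $\tau$. Here $M$ acts on the $P$-factor alone, while $\sigma$ acts on $F_\bullet$ in the natural way and on $P_\bullet$ through $\alpha$; the hypothesis $\partial\tau + \tau\partial = 1 - \alpha^{p^n}$ is precisely what is required for the total differential to square to zero, exploiting that $N(\sigma)$ appears in alternating positions of the $F_\bullet$ differential. The acyclicity and projectivity of $X_\bullet$ over $KG$ then follow from those of $F_\bullet$ over $KQ$ and $P_\bullet$ over $KM$, combined with the fact that \eqref{eq: splitextension} is split.

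Next, I would apply $\Hom_{KG}(X_\bullet, V)$ to produce a double complex computing $\HH^\bullet(G; V)$ and check that the filtration by $F_\bullet$-degree coincides with the LHS filtration, so that the associated spectral sequence is $E$. Since $M$ acts trivially on $V$, the $E_1$ page is computed from the horizontal differentials (alternating $\sigma - 1$ and $N(\sigma)$), yielding the standard description $E_2^{r,s} = \HH^r(Q; \HH^s(M; V))$. The differential $\dd_2$ is then computed by the usual zigzag: lift a cocycle $f \in \Hom_{KM}(P_s, V)$ representing $\zeta$ to a cochain on the double complex, apply the total differential, observe that the purely horizontal component vanishes because $\zeta$ already survives to $E_2$, and read off the remaining contribution. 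The correction term built from $\tau$ in the differential of $X_\bullet$ produces precisely the cochain $f \circ \tau \in \Hom_{KM}(P_{s-1}, V)$, and the sign $(-1)^r$ emerges from the Koszul convention when the total differential is commuted across the $F_r$-factor.

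The main obstacle will be the careful sign and convention bookkeeping, rather than any conceptual difficulty. Specifically, one has to verify that the twisted differential on $X_\bullet$ squares to zero exactly thanks to the prescribed identity for $\tau$, that the filtration indeed matches the one yielding the LHSss, and that the zigzag produces $(-1)^r f \circ \tau$ with the asserted sign rather than some variant. Conceptually the argument is a verbatim generalisation of Siegel's case $n = 1$ with $\alpha^{p}$ replaced by $\alpha^{p^n}$, but making every identification precise---particularly the construction of $X_\bullet$ as a $KG$-resolution and the detailed tracking of the total differential---is exactly why the authors defer the full proof to Appendix \ref{Appendix: Siegel}.
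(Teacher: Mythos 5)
Your route is genuinely different from the one in the paper. The paper does \emph{not} build a $KG$-resolution out of $(P_\bullet,\alpha,\tau)$: it takes as input the Charlap--Vasquez/Siegel formula for $\dd_2$ stated on the bar resolution $Y_\bullet$ of $KQ$ (Theorem 1 of Siegel, reproduced as Theorem \ref{thm: SiegelAppendix}), and the entire technical content of Appendix \ref{Appendix: Siegel} on this point is the construction of explicit $K$-chain maps $\theta\colon Y_\bullet\to Z_\bullet$ and $\eta\colon Z_\bullet\to Y_\bullet$ between the bar resolution and the minimal resolution of $C_{p^n}$ (Lemma \ref{Lem: Chain maps C-V}), through which the bar-resolution formula is transported to yield $(-1)^r f\circ\tau$. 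Your twisted-tensor-product (Wall-type) resolution $X_\bullet=F_\bullet\otimes_K P_\bullet$ bypasses the bar resolution and the comparison maps entirely, at the cost of having to construct and verify a full $KG$-resolution and re-identify the LHS filtration; the paper's route outsources that to the already-established Charlap--Vasquez machinery and only has to check that $\theta$ and $\eta$ are chain maps. Both are legitimate; Siegel's own Corollary 2 is proved the paper's way.

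There is, however, one concrete gap in your argument as written: the claim that $\partial\tau+\tau\partial=1-\alpha^{p^n}$ is ``precisely what is required for the total differential to square to zero'' is too strong. In a Wall resolution the differential has components $d_k\colon F_r\otimes P_s\to F_{r-k}\otimes P_{s+k-1}$ for all $k\geq 0$, and $d^2=0$ imposes $\sum_{i+j=k}d_id_j=0$ for every $k$. The hypothesis on $\tau$ handles only the case $k=2$ (where $d_1d_1=\alpha^{p^n}-1$ on the $P$-factor, coming from $(\sigma\alpha-1)\cdot\sum_i\sigma^i\alpha^i$); for $k\geq 3$ you need higher correction terms $d_3,d_4,\dotsc$, which exist by acyclicity of $P_\bullet$ but are not determined by $\alpha$ and $\tau$. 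You must either invoke Wall's theorem to supply them (they do not affect the computation of $\dd_2$, so the conclusion survives), or truncate the construction and argue directly that the truncated data suffices to read off the second differential. Without one of these, $X_\bullet$ is not yet a complex and the zigzag computation has no ambient object to live in.
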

\begin{proof}
	See Appendix \ref{Appendix: Siegel}.
\end{proof}

\subsection{Chain maps $\alpha$ and $\tau$}

The problem of computing $\dd_2$ is reduced to finding appropriate maps $\alpha$ and $\tau$ satisfying the hypotheses in the previous theorem. We start by defining such maps.

Let $P_{\bullet}'\longrightarrow K$ and $P_{\bullet}''\longrightarrow K$ be the minimal projective resolutions of $K$ as a module over $K\gen{a}$ and $K\gen{b}$, respectively. For each $k\geq 0$, let $e_k'$ and $e_k''$ be the basis elements of $P_k'$ and $P_k''$, respectively. We can then write $P_k'=K\gen{a}e_k'$ and $P_k''=K\gen{b}e_k''$, and so $P_{\bullet}=P_{\bullet}'\otimes P_{\bullet}''\longrightarrow K$ is the minimal projective $KM$-resolution of $K$. If we set 

\begin{equation}\label{eq: eij}
e_j^i=\begin{cases}
e_{i-j}'\otimes e_j'', & \text{if }0\leq j\leq i, \\
0, & \text{otherwise},
\end{cases}
\end{equation}
then, for each $k\geq 0$, the elements $e_0^k,\dotsc,e_k^k$ constitute a basis of $P_k$ as a $KM$-module.  Using the duality $\Cohom^{\bullet}(M)\isom \Cohom_{\bullet}(M)^*$ and the fact that $\Cohom_{\bullet}(M)=P_{\bullet}\otimes_{KM} K$ is a quotient of $P_{\bullet}$ via the canonical map $P_{\bullet}\longrightarrow P_{\bullet}\otimes_{KM} K$, with a slight abuse of notation we can identify the elements of $\Cohom^{\bullet}(M)$ as follows:
\begin{equation}\label{eq: represent_e}
 \text{for } i_1,i_2,j_1,j_2\geq 0,\qquad x_1^{i_1}y_1^{j_1}x_2^{i_2}y_2^{j_2}=(e_{j_1+2j_2}^{i_1+j_1+2i_2+2j_2})^*.
\end{equation}
Consider the elements $\rho,\kappa\in KM$ given by
$$\rho=\sum_{0\leq j\leq i<p^n}a^ib^{j}, \quad \kappa=\sum_{i=0}^{p^n-1}(i+1)a^i,$$
and define the maps $\alpha\in\Hom_{K M}(P_{\bullet},P_{\bullet}^{\sigma^{-1}})_0$ and $\tau\in \Hom_{K M}(P_{\bullet},P_{\bullet})_1$ as the homomorphisms that for $0\leq j\leq i<p^n$ satisfy the following equalities:
\begin{align*}
\alpha(e^{2i}_{2j}) & =\sum_{j\leq k\leq i}\binom{k}{j} (e^{2i}_{2k}-\rho e^{2i}_{2k+1}), & \tau(e^{2i}_{2j})&= -(j+1)\kappa e^{2i+1}_{2j+2}, \\
\alpha(e^{2i}_{2j+1}) & =\sum_{j\leq k\leq i}\binom{k}{j} be^{2i}_{2k+1}, & \tau(e^{2i}_{2j+1})& =-(j+1) e^{2i+1}_{2j+3}, \\
\alpha(e^{2i+1}_{2j}) & =\sum_{j\leq k\leq i}\binom{k}{j} (be^{2i+1}_{2k}+ e^{2i+1}_{2k+1}), & \tau(e^{2i+1}_{2j}) &= -(j+1) e^{2i+2}_{2j+2}, \\
\alpha(e^{2i+1}_{2j+1}) & =\sum_{j\leq k\leq i}\binom{k}{j} e^{2i+1}_{2k+1}, & \tau(e^{2i+1}_{2j+1}) &= -(j+1)\kappa e^{2i+2}_{2j+3}.
\end{align*}

\begin{lemma} \label{Lem: alpha tau}
	The maps $\alpha$ and $\tau$ defined as above satisfy the equalities $\partial \alpha -\alpha\partial=0$ and $\partial \tau -\tau \partial =1-\alpha^{p^n}$.
\end{lemma}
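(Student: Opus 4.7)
The proof is a direct verification on the $KM$-basis $\{e^i_j : 0 \le j \le i\}$ of $P_\bullet$, done case-by-case on the parities of $i-j$ and $j$. The first task is to make the boundary $\partial$ fully explicit. Because $P_\bullet = P'_\bullet \otimes P''_\bullet$, and each cyclic factor has a minimal resolution whose differential alternates between multiplication by $(a-1)$ (resp.\ $(b-1)$) and by the norm $\sum_{k=0}^{p^n-1} a^k$ (resp.\ the analogous norm in $b$), the graded Leibniz rule yields a four-case closed formula for $\partial(e^i_j)$. At the same time I would record the elementary $KM$-identities $(a-1)\kappa = -\sum_k a^k$ (valid in characteristic $p$ because $p^n \equiv 0$), the analogous telescopings of $(a-1)\rho$ and $(b-1)\rho$, and the commutation rules of $\rho$, $\kappa$, the two norms with $a$ and $b$. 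These are the algebraic engine of the whole computation.

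For the chain-map identity $\partial\alpha = \alpha\partial$, the verification is carried out separately in each of the four parity cases of $(i,j)$. Substituting the explicit formulas turns each side into a sum indexed by $k$ with binomial weights $\binom{k}{j}$ and $KM$-coefficients linear in $\rho$, the two norms, $a$, and $b$. Pascal's identity $\binom{k+1}{j} = \binom{k}{j} + \binom{k}{j-1}$ then aligns the index shifts produced by $\partial$ with those produced by $\alpha$, and the $KM$-relations above absorb the residual boundary terms. Structurally this is the $n = 1$ argument from \cite{Siegel96} rewritten with the norm of $C_{p^n}$ in place of the norm of $C_p$, and the extra factor $p^{n-1}$ that appears in several intermediate sums vanishes in characteristic $p$ whenever $n \ge 2$.

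The homotopy identity $\partial\tau - \tau\partial = 1 - \alpha^{p^n}$ is the harder half, and the crux is obtaining a workable closed form for $\alpha^{p^n}$. Iterating the definition of $\alpha$ and invoking Vandermonde's convolution to collapse nested sums of products of binomials, one shows by induction on $m$ that $\alpha^m(e^i_j)$ can be written as a single sum over $k$ with a binomial coefficient depending on $m$, $j$, $k$ together with a $KM$-coefficient built from powers of $\rho$ and of $b$. Specialising $m = p^n$ and applying Lucas's theorem modulo $p$ produces massive cancellation, leaving $\alpha^{p^n}(e^i_j)$ equal to $e^i_j$ modulo terms supported on the norm ideals. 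A direct expansion of $\partial\tau(e^i_j) - \tau\partial(e^i_j)$ using the defining formulas for $\tau$ and the relation $(a-1)\kappa = -\sum_k a^k$ then recovers exactly the negative of that correction, yielding the identity.

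The principal obstacle is locating the correct closed form for $\alpha^{p^n}$: once it is in hand, the remaining four-parity verifications for each identity are routine bookkeeping. This is consistent with the authors' acknowledgement of Siegel, since iterating $\alpha$ inside $KM$ while tracking the binomial bookkeeping is exactly the step where one benefits from his earlier technique.
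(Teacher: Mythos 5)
Your proposal is correct and follows essentially the same route as the paper's Appendix: an explicit four-case formula for $\partial$ from the tensor decomposition, the telescoping identities for $\rho$ and $\kappa$, Pascal's rule for the chain-map check, and an inductive closed form for $\alpha^m$ via a Vandermonde-type binomial convolution that collapses at $m=p^n$ to leave exactly the correction term $\kappa N(b)e^{2i}_{2j+1}$ produced by $\partial\tau+\tau\partial$. The only cosmetic difference is that Lucas's theorem is not needed: the $m$-dependence sits in the factor $m^{k-j}$, so setting $m=p^n$ kills every term with $k>j$ in characteristic $p$ outright.
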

\begin{proof}
	See Appendix \ref{Appendix: Computations}.
\end{proof}

\subsection{Direct second differential computations}\label{subsec: differentials}

Using Theorem \ref{Thm: Charlap Vasquez cyclic} and the maps in Lemma \ref{Lem: alpha tau}, we can now compute the second differential of the remaining generators.




\begin{proposition}\label{Prop: d2 mu}
	The second differential of the elements $\mu_4,\dotsc, \mu_{2p}$ is as follows:
	\begin{enumerate}
		\item[(i)] For $2\leq i\leq p$, we have that $$\dd_2(\mu_{2i})=-(i-1)\lambda_1\mu_{2i-2}\gamma_2
		.$$
		
		\item[(ii)] For $2\leq i\leq p-1$, we have that $$\dd_2(\mu_{2i+1})=-i\lambda_1\mu_{2i-1}\gamma_2
		.$$
	\end{enumerate}
\end{proposition}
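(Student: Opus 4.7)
The plan is to invoke Theorem \ref{Thm: Charlap Vasquez cyclic} directly, with the chain homotopy $\tau$ of Lemma \ref{Lem: alpha tau}. Since both $\mu_{2i}$ and $\mu_{2i+1}$ lie in the first row $E_2^{1,\bullet}$, we have $r=1$, so the differential will be represented by $-f\circ\tau$, where $f$ is a cocycle representing the class. Using the identification \eqref{eq: represent_e}, these representing cocycles are $\mu_{2i}\leftrightarrow (e_{2i-1}^{2i-1})^*\in\Hom_{KM}(P_{2i-1},K)$ and $\mu_{2i+1}\leftrightarrow (e_{2i}^{2i})^*\in\Hom_{KM}(P_{2i},K)$. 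The task thus reduces to evaluating these functionals against $\tau(e^{2i-2}_j)$ (resp.\ $\tau(e^{2i-1}_j)$) for each admissible $j$, and then rewriting the resulting class in $W^{s-1}/(\sigma-1)W^{s-1}$ in terms of the multiplicative generators.

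The key simplification is the following. Several branches of the formulas for $\tau$ in Lemma \ref{Lem: alpha tau} produce a factor of $\kappa=\sum_{i=0}^{p^n-1}(i+1)a^i\in KM$. Since $f$ is a $KM$-linear map into the trivial module $K$, one has $f(\kappa\cdot x)=\varepsilon(\kappa)f(x)$, where $\varepsilon(\kappa)=p^n(p^n+1)/2$ vanishes in $K$ because $p$ is odd. Hence every $\kappa$-branch is annihilated. A parity check on the remaining branches then shows that only a single basis element of $P_{s-1}$ can contribute in each case: for $\mu_{2i}$, the odd subindex $j=2i-3$ contributes $-(i-1)$, while for $\mu_{2i+1}$ the even subindex $j=2i-2$ contributes $-i$; all other values of $j$ either vanish by parity or are killed by $\kappa$.

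Translating the surviving functionals back via \eqref{eq: represent_e} gives the dual basis elements $(e^{2i-2}_{2i-3})^{*}=x_1y_1y_2^{i-2}$ and $(e^{2i-1}_{2i-2})^{*}=x_1y_2^{i-1}$, so that $d_2(\mu_{2i})=(i-1)\overline{x_1y_1y_2^{i-2}}$ and $d_2(\mu_{2i+1})=i\,\overline{x_1y_2^{i-1}}$ in the respective bidegrees. To recognise these as products of known generators, one applies the multiplication rule from Section~\ref{sec: SecondPage}: for $\lambda_1\in E_2^{0,1}$ and $\mu_{2i-2}\in E_2^{1,2i-3}$ (resp.\ $\mu_{2i-1}\in E_2^{1,2i-2}$), the formula $(-1)^{r's}\bar\varphi\bar\varphi'=\overline{\varphi\smallsmile\varphi'}$ inserts a Koszul sign $(-1)^{1\cdot 1}=-1$, yielding $\overline{x_1y_1y_2^{i-2}}=-\lambda_1\mu_{2i-2}$ and $\overline{x_1y_2^{i-1}}=-\lambda_1\mu_{2i-1}$. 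Finally, Proposition~\ref{Prop: Multiplication generators E2}(ii) identifies $E_2^{1,\bullet}$ with $E_2^{3,\bullet}$ via multiplication by $\gamma_2$, giving the asserted equalities.

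The main obstacle is the sign and parity bookkeeping: one must carefully verify that only the single basis element indicated actually contributes to $f\circ\tau$, and then keep track of three separate sign contributions (the $(-1)^r$ from Theorem \ref{Thm: Charlap Vasquez cyclic}, the internal minus signs from $\tau$, and the Koszul sign in the multiplication formula). Luckily the vanishing of $\varepsilon(\kappa)$ collapses the combinatorics considerably, so the calculation is essentially forced once the formulas for $\tau$ are unpacked.
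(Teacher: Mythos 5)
Your proposal is correct and follows essentially the same route as the paper: identify the representing cocycles $(e^{2i-1}_{2i-1})^*$ and $(e^{2i}_{2i})^*$ via \eqref{eq: represent_e}, apply Theorem \ref{Thm: Charlap Vasquez cyclic} with the homotopy $\tau$ of Lemma \ref{Lem: alpha tau}, observe that only one basis element contributes to $f\circ\tau$, and translate back through the Koszul sign in the product formula. The only cosmetic difference is your appeal to $\varepsilon(\kappa)=p^n(p^n+1)/2=0$ to kill the $\kappa$-branches; this is valid but redundant, since those branches already vanish because the parity of their lower index never matches that of the dual basis element being evaluated.
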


\begin{proof}
Consider $\mu_{2i+2}=\overbar{y_1y_2^i }\in E_2^{1,2i+1}$ with $1\leq i\leq p-1$, which, by \eqref{eq: represent_e}, is represented by the map $f\colon P_{2i+1} \longrightarrow K$ with $f=(e^{2i+1}_{2i+1})^*$.  We can easily compute $f\circ \tau$ to obtain that, for $0\leq j\leq k<p^n$, we have that 
		\begin{align*}
		(f\circ \tau) (e^{2k+1}_{2j})  &= 0, \quad (f\circ \tau) (e^{2k+1}_{2j+1})  = 0, \quad (f\circ \tau) (e^{2k}_{2j})  = 0, \\
		(f\circ \tau) (e^{2k}_{2j+1}) & = \begin{cases}
		-i, & \text{if }k=i \text{ and } j=i-1, \\
		0, & \text{otherwise}.
		\end{cases} 
		\end{align*}
		Hence, $-(f\circ \tau)=i (e^{2i}_{2i-1})^*$, which represents $-i\lambda_1\mu_{2i}\gamma_2=i\overbar{x_1 y_1 y_2^{i-1}}$. Consequently, $$\dd_2(\mu_{2i+2})=-i\lambda_1\mu_{2i}\gamma_2.
		$$
		Take now $\mu_{2i+1}=\overbar{y_2^i }\in E_2^{1,2i}$ with $2\leq i\leq p-1$, which is represented by the map $f\colon P_{2i} \longrightarrow K$ with $f=(e^{2i}_{2i})^*$. 
		We compute $f\circ \tau$ to obtain that 
		\begin{align*}
		(f\circ \tau) (e^{2k}_{2j}) & = 0, \quad (f\circ \tau) (e^{2k}_{2j+1})  =0, \quad (f\circ \tau) (e^{2k+1}_{2j+1})  = 0,\\
		(f\circ \tau) (e^{2k+1}_{2j}) & =  \begin{cases}
		-i, & \text{if }k=i-1 \text{ and } j=i-1, \\
		0, & \text{otherwise}.
		\end{cases} 
		\end{align*}
		Hence, $-(f\circ \tau)=i (e^{2i-1}_{2i-2})^*$, which represents $-i\lambda_1\mu_{2i-1}\gamma_2=i\overbar{x_1 y_2^{i-1}}$. Hence, $$\dd_2(\mu_{2i+1})=-i\lambda_1\mu_{2i-1}\gamma_2
		.$$
\end{proof}

The proof of the next result is verbatim to the previous one and we leave it to the reader.

\begin{proposition}\label{prop: d2 nu3} The second differential of the element $\nu_3$ is trivial.
\end{proposition}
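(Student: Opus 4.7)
The plan is to apply Theorem~\ref{Thm: Charlap Vasquez cyclic} directly, mirroring the template used in Proposition~\ref{Prop: d2 mu}. First, I would identify $\nu_3 = x_1 y_2 - y_1 x_2 \in E_2^{0,3}$ with its cocycle representative via~\eqref{eq: represent_e}: reading the two monomials off the formula yields $x_1 y_2 = (e^3_2)^*$ and $y_1 x_2 = (e^3_1)^*$, so $\nu_3$ is represented by $f = (e^3_2)^* - (e^3_1)^* \in \Hom_{KM}(P_3, K)$. Theorem~\ref{Thm: Charlap Vasquez cyclic} (with $r = 0$) then says that $\dd_2(\nu_3)$ is represented by $f \circ \tau \in \Hom_{KM}(P_2, K)$, so it suffices to evaluate $f \circ \tau$ on each of the three basis elements $e^2_0, e^2_1, e^2_2$ of $P_2$ and verify that the result vanishes.

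A direct application of the formulas for $\tau$ (with $i = 1$ in each case) gives $\tau(e^2_0) = -\kappa\, e^3_2$, $\tau(e^2_1) = -e^3_3$, and $\tau(e^2_2) = -2\kappa\, e^3_4 = 0$, where the last equality uses the convention $e^3_4 = 0$ from~\eqref{eq: eij}. The latter two feed trivially into $f$, since $f(e^3_3) = 0$. The only non-obvious term is
\[
(f \circ \tau)(e^2_0) = -f(\kappa\, e^3_2) = -\sum_{i=0}^{p^n-1}(i+1) = -\frac{p^n(p^n+1)}{2},
\]
where the second equality uses $KM$-linearity of $f$ together with the fact that $K$ carries the trivial $M$-action, so $f(a^i e^3_2) = f(e^3_2) = 1$ for every $i$. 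Since $p$ is odd, $(p^n+1)/2$ is an integer and the expression is $p^n$ times an integer, hence vanishes in $K$. Consequently $f \circ \tau = 0$, and Theorem~\ref{Thm: Charlap Vasquez cyclic} yields $\dd_2(\nu_3) = 0$.

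The main difficulty, though essentially bookkeeping, is picking the correct parity case in the definition of $\tau$ for each basis element and honouring the convention $e^i_j = 0$ when $j > i$. The substantive reason the differential vanishes is that the only potentially nonzero contribution comes from the factor $\kappa = \sum_{i=0}^{p^n-1}(i+1)a^i$ appearing in the even-index formula for $\tau$, and the triangular sum it produces is $0$ in characteristic $p > 2$; by contrast, $\tau(e^2_1) = -e^3_3$ contributes nothing simply because $f$ vanishes on $e^3_3$.
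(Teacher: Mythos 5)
Your proof is correct and is exactly the approach the paper intends: the paper's own proof of this proposition is simply the remark that it is verbatim to Proposition \ref{Prop: d2 mu}, i.e.\ a direct application of Theorem \ref{Thm: Charlap Vasquez cyclic} with the explicit $\tau$ from Lemma \ref{Lem: alpha tau}. Your identification $f=(e^3_2)^*-(e^3_1)^*$, the evaluation of $\tau$ on $e^2_0,e^2_1,e^2_2$, and the observation that the only surviving term is $-f(\kappa e^3_2)=-p^n(p^n+1)/2=0$ in characteristic $p$ are all accurate.
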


\section{Third page of the spectral sequence}\label{sec: DescriptionThirdPage}

Using the results in Sections \ref{sec: survivetoE3} and \ref{subsec: differentials}, we can now determine the structure of the third page $E_3$. First, write $D_3=E_3/\gen{\nu_{2p}}$, and define the elements 
\begin{gather*}
\text{ for } 4\leq i\leq 2p+1, \quad \omega_i=-\lambda_1\mu_{i-1}\in E_2^{1,i-1},\\
\omega_{2p+2}=\lambda_2\mu_{2p}\in E_2^{1,2p+1}, \\
\xi_{2p+1}=\lambda_2\mu_{2p-1}\in E_2^{1,2p}.
\end{gather*}
One can easily verify that these elements have trivial second differential, and so they are in fact elements of $E_3$.

\begin{proposition} \label{Prop: Multiplication generators E3}
	Multiplication by the elements $\nu_{2p},\gamma_2,\lambda_2$ induces vector space homomorphisms as follows:
	\begin{enumerate}
		\item[(i)] Multiplication $\cdot\nu_{2p}\colon E_3^{r,s}\longrightarrow E_3^{r,s+2p}$ is injective for all $r,s\geq 0$. As a consequence, $E_{3}=K[\nu_{2p}]\otimes D_{3}$.
		
		\item[(ii)] Multiplication $\cdot\gamma_2\colon E_3^{r,s}\longrightarrow E_3^{r+2,s}$ is surjective for all $r,s\geq 0$, and an isomorphism for all $r\not=1$, as is $\cdot\gamma_2\colon D_3^{1,s}\longrightarrow D_3^{3,s}$ for $s\geq 2p-1$.

		\item[(iii)] Multiplication $\cdot\lambda_2\colon D_3^{r,s}\longrightarrow D_3^{r,s+2}$ is 
		an isomorphism 
		for all $s\geq 2p$.
	\end{enumerate}
\end{proposition}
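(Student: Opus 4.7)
All three parts share a common template: each of $\nu_{2p}$, $\gamma_2$, and $\lambda_2$ is a permanent cycle (as shown in Section~\ref{sec: survivetoE3}), so $d_2$ annihilates it and multiplication by it commutes with $d_2$ via the Leibniz rule. Combined with the fact that this multiplication is already an injection or a bijection at the $E_2$ (or $D_2$) level in the appropriate range by Proposition~\ref{Prop: Multiplication generators E2}, passing to $d_2$-cohomology gives surjectivity for free, so the only real question is injectivity.

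Part (i) is then formal: since $d_2(\nu_{2p})=0$ and $E_2=K[\nu_{2p}]\otimes D_2$ by Theorem~\ref{Thm: Structure E2}(iii), the differential $d_2$ is $K[\nu_{2p}]$-linear, and taking cohomology yields $E_3=K[\nu_{2p}]\otimes D_3$, from which injectivity of $\cdot\,\nu_{2p}$ is immediate.

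For part (ii), surjectivity is automatic: any cycle $w\in E_2^{r+2,s}$ can be written uniquely as $w=\gamma_2 z$ with $z\in E_2^{r,s}$, and $0=d_2(w)=\gamma_2 d_2(z)$ forces $d_2(z)=0$. For $r\geq 2$, the subspace $\Image(d_2\colon E_2^{r,s+1}\to E_2^{r+2,s})$ equals $\gamma_2\cdot\Image(d_2\colon E_2^{r-2,s+1}\to E_2^{r,s})$, since $\cdot\,\gamma_2\colon E_2^{r-2,s+1}\to E_2^{r,s+1}$ is an iso; combined with surjectivity, this yields the iso on $E_3$. For $r=0$, no image need be modded out on either side: the source has no incoming differential, and because every generator of $E_2^{0,\bullet}$ (namely $\lambda_1,\lambda_2,\nu_2,\nu_3,\nu_{2p}$) is a $d_2$-cycle, Leibniz forces $d_2$ to vanish on the whole zeroth column. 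The delicate case is $r=1$ with $s\geq 2p-1$: on $D_3^{1,s}=\ker d_2|_{D_2^{1,s}}$, injectivity of $\cdot\,\gamma_2$ reduces (using the $D_2$-level iso $\cdot\,\gamma_2$) to showing that $d_2\colon D_2^{1,s+1}\to D_2^{3,s}$ vanishes for $s+1\geq 2p$. I would verify this by running through the basis of $D_2^{1,s+1}$ from Theorem~\ref{Thm: Structure E2}(ii): elements with a $\lambda_1$ prefactor kill $d_2$ because $\lambda_1^2=0$, and the remaining basis elements $\lambda_2^a\mu_{2p-1}$ and $\lambda_2^a\mu_{2p}$ with $a\geq 1$ have $d_2$-images (via Proposition~\ref{Prop: d2 mu}) of the form $\lambda_1\lambda_2^a\mu_{2p-3}\gamma_2$, respectively $\lambda_1\lambda_2^a\mu_{2p-2}\gamma_2$, whose $W$-representatives $x_1x_2^ay_2^{p-2}$, respectively $x_1y_1x_2^ay_2^{p-2}$, both lie in $(\sigma-1)W$ for $a\geq 1$ by Proposition~\ref{Prop: Basis D2}(ii), and hence vanish in $D_2^{1,\cdot}$.

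Part (iii) proceeds in parallel: $d_2(\lambda_2)=0$ and $|\lambda_2|$ is even, so $\cdot\,\lambda_2$ commutes with $d_2$, and Proposition~\ref{Prop: Multiplication generators E2}(iii) provides the $D_2$-level iso $\cdot\,\lambda_2\colon D_2^{r,s}\to D_2^{r,s+2}$ for $s\geq 2p-1$. The same cycle/boundary matching as in (ii), together with the corresponding iso for $\cdot\,\lambda_2$ on $D_2^{r+2,s-1}\to D_2^{r+2,s+1}$ (requiring $s\geq 2p$) and on $D_2^{r-2,s+1}\to D_2^{r-2,s+3}$ (requiring $s\geq 2p-2$), yields the iso $\cdot\,\lambda_2\colon D_3^{r,s}\to D_3^{r,s+2}$ for $s\geq 2p$. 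The main technical obstacle throughout is the $r=1$ case of (ii): the threshold $s\geq 2p-1$ is sharp, since at $s=2p-2$ the element $\mu_{2p}\in D_2^{1,2p-1}$ has nonzero $d_2$-image in $D_2^{3,2p-2}$, which spoils injectivity, and only the explicit vanishings in $D_2$ detailed above restore it once $s\geq 2p-1$.
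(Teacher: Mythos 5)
Your proof is correct and follows essentially the same route as the paper: transfer the $E_2$-level injectivity/surjectivity statements of Proposition~\ref{Prop: Multiplication generators E2} to $E_3$ by matching $\dd_2$-cycles and $\dd_2$-boundaries, using that $\nu_{2p}$, $\gamma_2$ and $\lambda_2$ are $\dd_2$-cycles. In fact your treatment of the delicate case $r=1$, $s\geq 2p-1$ is more complete than the paper's, which merely asserts $\dd_2(\psi)=0$ there, whereas you verify on the basis of $D_2^{1,s+1}$ (via $\lambda_1^2=0$ and the $(\sigma-1)W$ relations of Proposition~\ref{Prop: Basis D2}(ii)) that $\dd_2$ vanishes on $D_2^{1,s+1}$ for $s+1\geq 2p$.
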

\begin{proof}
	The proofs of (i) and (ii) are based on the proof of \cite[Corollary 6]{Siegel96}.
	
	We start with the first statement. For $r,s\geq 0$, let $\varphi\in E_2^{r,s}$ be such that $\dd_2(\varphi)=0$, and suppose that $\varphi\nu_{2p}$ is a trivial element in $E_3$, i.e. there exists  $\psi\in E_2^{r-2,s+2p+1}$ such that   $\varphi\nu_{2p}=\dd_2(\psi)$. Then, since $\gen{\nu_{2p}}\cap \dd_2(E_2\setminus \gen{\nu_{2p}})=0$, there exists  $\upsilon\in E_2^{r-2,s+1}$ such that $\psi=\upsilon\nu_{2p}$. Consequently, $\varphi\nu_{2p}=\dd_2(\upsilon)\nu_{2p}$ and, because $\cdot\nu_{2p}\colon E_2^{r,s}\longrightarrow E_2^{r,s+2p}$ is injective (see Proposition \ref{Prop: Multiplication generators E2}(i)), we have that $\varphi=\dd_2(\upsilon)$, i.e. $\varphi=0$ in $E_3$.

		For the next claim, we first show that multiplication by $\gamma_2$ is surjective. Take  $\varphi\in E_2^{r+2,s}$ with  $r,s\geq 0$ such that $\dd_2(\varphi)=0$.  
		By Proposition \ref{Prop: Multiplication generators E2}(ii), there is some $\psi\in E_2^{r,s}$ such that $\varphi=\psi\gamma_2$ in $E_2$. Then, we have that $\dd_2(\psi)\gamma_2=\dd_2(\varphi)=0$ and, because the product $\cdot \gamma_2\colon E_2^{r+2,s-1}\longrightarrow E_2^{r+4,s-1}$ is injective, we deduce that $\dd_2(\psi)=0$, i.e. $\psi$ survives to $E_3$ and $\varphi=\psi\gamma_2$ in $E_3$. 
		
		We will now study the injectivity of the multiplication by $\gamma_2$. Let $\varphi\in E_2^{r,s}$ with $r\not=1$ or $s\geq 2p-1$ such that $\dd_2(\varphi)=0$. Suppose that there exists $\psi\in E_2^{r,s+1}$ such that $\varphi\gamma_2=\dd_2(\psi)$ and we want to deduce that $\varphi=0$. If $r=0$, or if $\varphi\in D_2^{r,s}$ with $s\geq 2p-1$, then $\dd_2(\psi)=0$, and  by the injectivity of $\cdot\gamma_2\colon E_2^{r,s}\longrightarrow E_2^{r+2,s}$ we obtain that $\varphi=0$. Otherwise, if $r\geq 2$ we have that $\psi=\upsilon\gamma_2$ with $\upsilon\in E_2^{r-2,s+1}$. Hence, $\varphi\gamma_2=\dd_2(\upsilon)\gamma_2$ and, because $\cdot\gamma_2\colon E_2^{r,s}\longrightarrow E_2^{r+2,s}$ is injective, we have that $\varphi=\dd_2(\upsilon)$, i.e. $\varphi=0$ in $E_3$. 
		
		 
		Now, let us show that multiplication by $\lambda_2$ is surjective for $s\geq 2p$. Take  $\varphi\in E_2^{r,s+2}$ with  $s\geq 2p$ such that $\dd_2(\varphi)=0$.  
		By Proposition \ref{Prop: Multiplication generators E2}(iii), there is some $\psi\in E_2^{r,s}$ such that $\varphi=\psi\lambda_2$ in $E_2$. Then, we have that $\dd_2(\psi)\lambda_2=\dd_2(\varphi)=0$ and, because the product $\cdot \lambda_2\colon E_2^{r+2,s-1}\longrightarrow E_2^{r+2,s+1}$ is injective, we deduce that $\dd_2(\psi)=0$, i.e. $\psi$ survives to $E_3$ and $\varphi=\psi\lambda_2$ in $E_3$.

		Finally, we show that multiplication by $\lambda_2$ is injective for $s\geq 2p$.  Let $\varphi\in E_2^{r,s}$ with $g\geq 2p$ such that $\dd_2(\varphi)=0$. Suppose that $\varphi\lambda_2=\dd_2(\psi)$ for some $\psi\in E_2^{r-2,s+3}$. Then, as $\gen{\lambda_2}\cap \dd_2(E_2\setminus \gen{\lambda_2})=0$, there exists $\upsilon\in E_2^{r-2,s+1}$ such that $\psi=\upsilon\lambda_2$. Therefore, $\varphi\lambda_2=\dd_2(\upsilon)\lambda_2$ and, because $\cdot\lambda_2\colon E_2^{r,s}\longrightarrow E_2^{r,s+2}$ is injective, we have that $\varphi=\dd_2(\upsilon)$, i.e. $\varphi=0$ in $E_3$.

\end{proof}

We can now fully determine the structure of $E_3$.

\begin{theorem}\label{Thm: Structure E3}
	The structure of the third page can be described as follows:
	\begin{enumerate}
		\item[(i)] For $r\geq 0$ even, we have that $E_3^{r,\bullet}=E_2^{r,\bullet}$. For $r\geq 5$ odd, we have that $D_3^{r,s}=D_3^{r-2,s}\gamma_2$. For $r=1,3$, the basis elements of $D_3^{r,s}$ are the following:
		\renewcommand{\arraystretch}{1.4}
	\begin{small}	\begin{figure}[H]
			\centering
			\begin{tabular}{r|c|c|}\hline
				$2i+1\geq 2p+1$ & $\lambda_2^{i-p+1}\omega_{2p},\quad  \lambda_2^{i-p}\xi_{2p+2}$ & $\lambda_2^{i-p+1}\omega_{2p}\gamma_2,\quad  \lambda_2^{i-p}\xi_{2p+2}\gamma_2$ \\ \hline
				$2i\geq 2p$ & $\lambda_2^{i-p}\omega_{2p+1},\quad  \lambda_2^{i-p}\xi_{2p+1}$ & $\lambda_2^{i-p}\omega_{2p+1}\gamma_2,\quad \lambda_2^{i-p}\xi_{2p+1}\gamma_2$\\ \hline
				$2p-1$ & $\omega_{2p}$ & $\omega_{2p}\gamma_2$ \\ \hline
				$6\leq s<2p-2$ & $\omega_{s+1}$ & $\emptyset$ \\ \hline
				$5$ & $\omega_6
				$ & $\emptyset$ \\ \hline
				$4$ & $\mu_2\nu_3$ & $\emptyset$ \\ \hline
				$3$ & $\lambda_1\mu_3$ & $\emptyset$ \\ \hline
				$2$ & $\mu_3$ & $\mu_3\gamma_2$ \\ \hline
				$1$ & $\mu_2$ & $\mu_2\gamma_2$ \\ \hline
				$0$ & $\gamma_1$ & $\gamma_1\gamma_2$ \\ \hline
				$s$ & $D_3^{1,s}$ & $D_3^{3,s}$ 
			\end{tabular}
		\end{figure}\end{small}\noindent
		\renewcommand{\arraystretch}{1}Additionally, if $p\geq 5$ we have that $\omega_6=\frac{2}{3}\nu_3\mu_3$, and so $D_3^{1,5}=\gen{\nu_3\mu_3}$.

		\item[(ii)] We can write $E_3=K[\nu_{2p}]\otimes D_3$. Furthermore, for $p=3$, the third page $E_3$ is generated by the elements  $$\lambda_1,\lambda_2, \nu_2,\nu_3,\nu_{6}, \gamma_1,\gamma_2,\mu_2,\mu_3,\omega_6,\omega_7,\omega_{8},\xi_{7},$$ and for $p\geq 5$, by the elements  $$\lambda_1,\lambda_2, \nu_2,\nu_3,\nu_{2p}, \gamma_1,\gamma_2,\mu_2,\mu_3,\omega_7,\dotsc,\omega_{2p+2},\xi_{2p+1}.$$ 
	\end{enumerate}
\end{theorem}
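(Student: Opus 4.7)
The plan is to determine $E_3$ column by column, exploiting the multiplicative framework of Proposition \ref{Prop: Multiplication generators E3} to reduce the computation to low columns. Since $\dd_2\colon E_2^{r,s}\to E_2^{r+2,s-1}$ preserves the parity of $r$, the even and odd columns can be treated independently.

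For even $r$, I would first show that $\dd_2$ vanishes on $E_2^{r,\bullet}$. By Theorem \ref{Thm: Structure E2}(i), every element of $D_2^{0,\bullet}$ is a polynomial in $\lambda_1,\lambda_2,\nu_2,\nu_3,\nu_{2p}$, each of which survives to $E_\infty$ by the results of Section \ref{sec: survivetoE3}; hence $\dd_2$ annihilates these generators, and the Leibniz rule together with $\dd_2(\gamma_2)=0$ propagates the vanishing to all of $D_2^{2j,\bullet}=D_2^{0,\bullet}\gamma_2^j$. Since any incoming $\dd_2$ into an even column comes from another even column, we conclude $E_3^{r,\bullet}=E_2^{r,\bullet}$ for every even $r$.

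For odd $r\geq 5$, the equality $D_3^{r,s}=D_3^{r-2,s}\gamma_2$ follows at once from Proposition \ref{Prop: Multiplication generators E3}(ii), which asserts that $\cdot\gamma_2$ is an isomorphism whenever $r\neq 1$. The substantive work therefore concentrates on computing $D_3^{1,s}$ and $D_3^{3,s}$. Since there is no column $r=-1$, we have $D_3^{1,s}=\ker(\dd_2\colon D_2^{1,s}\to D_2^{3,s-1})$; combining the basis of $D_2^{1,s}$ from Proposition \ref{Prop: Basis D2}(iii), the differentials of Propositions \ref{Prop: d2 mu} and \ref{prop: d2 nu3}, and the Leibniz rule, I would verify degree by degree that the proposed $\omega_i$, $\xi_{2p+1}$, and small-degree terms span the kernel. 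For $D_3^{3,s}$ I would additionally pass to the quotient by the image of $\dd_2\colon D_2^{1,s+1}\to D_2^{3,s}$; by Proposition \ref{Prop: d2 mu}, this image is spanned by the classes $\lambda_1\mu_k\gamma_2$ with $2\leq k\leq 2p-2$, whose coefficients are nonzero modulo $p$, so precisely these classes vanish in $E_3$, leaving the basis displayed in the table.

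The main obstacle is the identity $\omega_6=\tfrac{2}{3}\nu_3\mu_3$ valid for $p\geq 5$. Being a relation in the quotient $D_3^{1,5}\subseteq W^5/(\sigma-1)W^5$ rather than in $E_2$, it must be established by an explicit cochain computation. Using the multiplication formula of Section \ref{sec: SecondPage}, one expresses $\omega_6=-\lambda_1\mu_5$ and $\nu_3\mu_3$ as the classes of $\overbar{x_1 y_2^2}$ and $\overbar{y_1 x_2 y_2}-\overbar{x_1 y_2^2}$ respectively. Applying $\sigma-1$ to the monomials $x_1 x_2 y_2$, $x_1 y_2^2$, $y_1 x_2 y_2$ and $y_1 y_2^2$ via \eqref{ActionOnCohomologyOfM} yields the relations $\overbar{x_1 x_2^2}=\overbar{x_1 x_2 y_2}=\overbar{y_1 x_2^2}=0$ and $\overbar{x_1 y_2^2}+2\overbar{y_1 x_2 y_2}=0$ in $W^5/(\sigma-1)W^5$, from which the proportionality of $\omega_6$ and $\nu_3\mu_3$ emerges with the claimed scalar, provided $3\in K^\times$---which is exactly the condition $p\geq 5$. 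Part (ii) then follows directly from (i) together with Proposition \ref{Prop: Multiplication generators E3}: each basis element of $D_3$ is a product in the claimed generators under $\gamma_2$-, $\lambda_2$- and $\nu_{2p}$-multiplication, with $\omega_6$ being redundant precisely when the identity above is available, i.e.\ for $p\geq 5$, and mandatory for $p=3$.
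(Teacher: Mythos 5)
Your overall route coincides with the paper's (whose proof of this theorem occupies only a few lines): the even columns are unchanged by the Leibniz rule since $\lambda_1,\lambda_2,\nu_2,\nu_3,\nu_{2p},\gamma_2$ are permanent cycles; the odd columns $r\geq 5$ reduce to $r=1,3$ via Proposition \ref{Prop: Multiplication generators E3}(ii); $D_3^{1,s}$ is a kernel and $D_3^{3,s}$ a kernel modulo an image, both read off from Propositions \ref{Prop: d2 mu} and \ref{prop: d2 nu3}; and the identity relating $\omega_6$ and $\nu_3\mu_3$ is checked by hand. Your explicit verification of that identity inside $W^5/(\sigma-1)W^5$ is correct (up to an overall sign coming from the product conventions) and is a genuine addition, since the paper merely asserts it; in particular you correctly isolate $3\in K^{\times}$, i.e.\ $p\geq 5$, as the exact condition under which $\omega_6$ becomes redundant as a generator.

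The gap lies in the step you defer: ``verify degree by degree that the proposed \dots small-degree terms span the kernel.'' At $s=2$ this verification does not come out as the table states. By the table of basis elements of $D_2^{r,s}$, the space $D_2^{1,2}$ is two-dimensional, with basis $\overbar{y_2}=\mu_3$ and $\overbar{x_1y_1}=-\lambda_1\mu_2=\nu_2\gamma_1$. Both classes lie in $\ker\dd_2$: for $\mu_3$ this is proved in Section \ref{sec: survivetoE3}, and for $\lambda_1\mu_2$ it follows from the derivation property because $\dd_2(\lambda_1)\in E_2^{2,0}$ and $\dd_2(\mu_2)\in E_2^{3,0}$ both vanish (the extension splits, so no differential lands in the bottom row). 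Since no differential maps into the first column, $D_3^{1,2}=\gen{\mu_3,\lambda_1\mu_2}$ is two-dimensional, whereas the table you are proving asserts $D_3^{1,2}=\gen{\mu_3}$; moreover $\lambda_1\mu_2$ is a product of permanent cycles and so persists to $E_{\infty}^{1,2}$, so the discrepancy is not repaired later. (By contrast $\lambda_1\mu_2\gamma_2\in D^{3,2}$ does die, being $-\dd_2(\mu_4)$, so the $r=3$ column is unaffected.) This is precisely the one degree where your uniform argument ``$\dd_2(\mu_{s+1})\neq 0$, hence only $\lambda_1\mu_s$ survives'' breaks down, because $\mu_3$ itself is a permanent cycle. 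To close the proof you must either exhibit a differential or relation killing $\lambda_1\mu_2$ in $E_3^{1,2}$ --- none is available from the results quoted --- or record that the displayed basis of $D_3^{1,2}$, and the dimension count that feeds into Lemma \ref{Lem: dim D infinity}, requires the additional element $\nu_2\gamma_1$.
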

\begin{proof}
	We can deduce from Theorem \ref{Thm: Structure E2} that $E_3^{1,s}=\gen{\omega_{s+1}}$ for $3\leq s\leq 2p-1$. Nevertheless, we can easily compute 
	\begin{displaymath}
	-\omega_4=\lambda_1\mu_3, \quad -\omega_5= \nu_3\mu_2, \quad \frac32\omega_6=\nu_3\mu_3.
	\end{displaymath}
	Everything else follows from Propositions \ref{Prop: d2 mu} and \ref{Prop: Multiplication generators E3}.
\end{proof}

\section{To infinity and beyond}\label{sec: InfinityPage}

Our objective in this section is to show that if $p\geq 5$ the spectral sequence $E$ collapses at $E_3$, i.e. $E_3=E_{\infty}$. In order to achieve our goal, we will define two group automorphisms that will help us show that all the differentials starting with $\dd_3$ are trivial. Let $u\in \mathcal{U}(\Z/p^n\Z)$ be a generator, i.e. $u^{p^{n-1}(p-1)}=1$ but $u^i\not=1$ for any $1\leq i<p^{n-1}(p-1)$. For $0\leq i,j,k\leq p^n -1$, we define the group automorphisms $\Phi\colon G\longrightarrow G$ and $\Psi\colon G\longrightarrow G$ by
\begin{displaymath}
\Phi(\sigma^ka^ib^j)=\sigma^{uk}a^ib^{uj}, \quad \text{and} \quad \Psi(\sigma^ka^ib^j)=\sigma^ka^{ui}b^{uj}.
\end{displaymath}
Because $\Phi(M),\Psi(M)\leq M$,  for every $m\geq 2$, there are induced automorphisms $\Phi^*\colon E_m\longrightarrow E_m$ and $\Psi^*\colon E_m\longrightarrow E_m$.  These automorphisms act on the generators of $D_3$ by multiplying each of them by a power of $u$ as described in the following table:
\renewcommand{\arraystretch}{1.5}
\begin{figure}[H]
	\centering
	\begin{tabular}{r|c|c|c|c|c|c|c|c|c|}
		& $\lambda_{i}$ & $\gamma_{i}$ & $\nu_2$ & $\nu_3$ & $\mu_{2i}$ & $\mu_{2i+1}$ & $\omega_{2i}$ & $\omega_{2i+1}$ & $\xi_{2p+1}$ \\ \hline
		$\Phi$ & $1$ & $u$ & $u$ & $u$ & $u^{i+1}$ & $u^{i+1}$ & $u^{i}$ & $u^{i+1}$ & $u^p$\\ \hline
		$\Psi$ & $u$ & $1$ & $u^2$ & $u^2$ & $u^{i}$ & $u^{i}$ & $u^{i}$ & $u^{i+1}$ & $u^p$\\ \hline
	\end{tabular}
\end{figure}
\renewcommand{\arraystretch}{1}


\begin{proposition} \label{Prop: xi in E infinity}
	For $p\geq 5$, the element $\xi_{2p+1}\in E_3$ survives to $E_{\infty}$.
\end{proposition}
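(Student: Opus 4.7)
The plan is to use the $K$-algebra automorphisms $\Phi^*$ and $\Psi^*$ of $E$ to force $d_m(\xi_{2p+1})=0$ for every $m\geq 3$. Since $\Phi$ and $\Psi$ are group automorphisms of $G$ preserving $M$, both $\Phi^*$ and $\Psi^*$ commute with every differential. The weight table preceding the statement gives that $\xi_{2p+1}$ is a simultaneous $u^p$-eigenvector of $\Phi^*$ and $\Psi^*$, so each $d_m(\xi_{2p+1})\in E_m^{m+1,2p-m+1}$ is again a common $(u^p,u^p)$-eigenvector. Because $E_m^{r,s}$ is a subquotient of $E_3^{r,s}$, it suffices to show that the simultaneous $(u^p,u^p)$-eigenspace of $E_3^{m+1,2p-m+1}$ vanishes for every $3\leq m\leq 2p+1$ (for larger $m$ the target has negative second coordinate and is automatically zero).

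Set $(r,s)=(m+1,2p-m+1)$, so $r\geq 4$, $r+s=2p+2$, and $r,s$ share parity with $s\leq 2p-2$; in particular no $\nu_{2p}$-multiple contributes to $E_3^{r,s}$. I would split into two cases. If $r=2k$ is even, Theorem \ref{Thm: Structure E3}(i) and Proposition \ref{Prop: Multiplication generators E2}(ii) yield $E_3^{2k,2j}=E_2^{0,2j}\gamma_2^k$ with basis $\lambda_2^j\gamma_2^k$ and, when $j\geq 1$, $\lambda_2^{j-1}\nu_2\gamma_2^k$, whose $(\Phi,\Psi)$-weights are $(u^k,u^j)$ and $(u^{k+1},u^{j+1})$ respectively. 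Combining the constraint $k+j=p+1$ with either $k=j=p$ or $k=j=p-1$ forces $p\in\{1,3\}$, impossible for $p\geq 5$. If instead $r=2k+1$ is odd with $k\geq 2$, then iterating the surjective multiplication $\cdot\gamma_2\colon E_3^{r',s}\to E_3^{r'+2,s}$ of Proposition \ref{Prop: Multiplication generators E3}(ii) writes every class in $E_3^{2k+1,s}$ as $x\gamma_2^k$ for some $x\in E_3^{1,s}=D_3^{1,s}$. An inspection of the basis of $D_3^{1,s}$ in Theorem \ref{Thm: Structure E3}---consisting of $\gamma_1$, $\mu_2$, $\mu_3$, $\lambda_1\mu_3$, $\mu_2\nu_3$, or some $\omega_i$---shows that in every case the $\Phi$-exponent and the $\Psi$-exponent differ by $0$ or $1$. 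Multiplication by $\gamma_2^k$ shifts the $\Phi$-exponent by $k$ without touching the $\Psi$-exponent, so this difference becomes an element of $\{k,k+1\}\subseteq[2,p+1]$. Since the order of $u$ is $p^{n-1}(p-1)\geq p(p-1)>p+1$ whenever $n\geq 2$ and $p\geq 5$, this difference is nonzero modulo the order; hence $\Phi$- and $\Psi$-weights cannot coincide, and in particular $(u^p,u^p)$ does not occur.

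Combining both cases gives $d_m(\xi_{2p+1})=0$ for every $m\geq 3$, so $\xi_{2p+1}$ survives to $E_\infty$. The principal obstacle is the book-keeping in the odd-$r$ case, where $D_3^{1,s}$ has several exceptional basis elements at small $s$ (notably $\omega_6=\tfrac{2}{3}\nu_3\mu_3$ at $s=5$, available only when $p\geq 5$); the uniform bound on the $\Phi$-minus-$\Psi$ exponent is what allows a single inequality to dispatch them all. The hypothesis $p\geq 5$ is sharp for this strategy, since at $p=3$ the class $\lambda_2\nu_2\gamma_2^2\in E_3^{4,4}$ already has weights $(u^3,u^3)$ and the even-$r$ argument collapses.
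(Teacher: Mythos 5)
Your overall strategy is the same as the paper's: use the automorphisms $\Phi^*,\Psi^*$ and the weight table to kill every $\dd_m(\xi_{2p+1})$, and your repackaging (reduce everything to the vanishing of the simultaneous $(u^p,u^p)$-eigenspace of $E_3^{m+1,\,2p-m+1}$ and sweep through the bidegrees) is a reasonable reorganisation of the paper's page-by-page induction. The difficulty is quantitative. The eigenvalues of $\Phi^*$ and $\Psi^*$ on the $K$-vector spaces $E_m^{r,s}$ are powers of $u\bmod p\in\F_p^{\times}\subseteq K^{\times}$, whose multiplicative order is $p-1$, not $p^{n-1}(p-1)$: the latter is the order of $u$ in $(\Z/p^n\Z)^{\times}$, but once $\Phi$ acts on $\Hom(M,K)$ or on $\HH^{2}(Q;K)$ with $K$ of characteristic $p$, only $u\bmod p$ survives. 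Hence all your exponent comparisons must be read modulo $p-1$, and in particular $u^{p}=u$.

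This breaks the even case at exactly one bidegree: for $(k,j)=(p,1)$, i.e.\ the target $E_3^{2p,2}$ of $\dd_{2p-1}$, the basis element $\lambda_2\gamma_2^{p}$ has weight $(u^{p},u^{1})=(u,u)=(u^{p},u^{p})$, so the $(u^p,u^p)$-eigenspace there is \emph{not} zero and your argument does not exclude $\dd_{2p-1}(\xi_{2p+1})=t\,\lambda_2\gamma_2^{p}$. (Your ``forces $p\in\{1,3\}$'' comes from solving $k=j=p$ over the integers; the correct conditions $k\equiv j\equiv 1\pmod{p-1}$ do admit the solution $(k,j)=(p,1)$ with $k+j=p+1$.) The odd case also needs repair at $k=p-1$, where the $\Phi$-minus-$\Psi$ exponent difference is $\equiv 0\pmod{p-1}$; there the argument can be rescued by comparing the full weights rather than their difference, since the common weight $u^{2}$ of $\lambda_1\mu_3\gamma_2^{p-1}$ differs from $u^{p}=u$. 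To be fair, the paper's own proof has the same soft spot: the deduction ``$t_1(1-u^{j})=0$, hence $t_1=0$ for all $j\geq 1$'' fails at $j=p-1$, which is again precisely the coefficient of $\lambda_2\gamma_2^{p}$. Either way, as written your proof does not close the case $m=2p-1$; some additional input (for instance, an argument that the permanent class $\lambda_2\gamma_2^{p}$ cannot be a boundary, via restriction or detection) is required.
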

\begin{proof}
	Assume by induction that, for $m\geq 3$, $\xi_{2p+1}\in E_m$, and we will show that $\xi_{2p+1}\in E_{m+1}$. Consider first the case $m=2j+1$ with $j\geq 1$. We have that 
	\begin{equation}\label{eqn: differential xi}
	\dd_{2j+1}(\xi_{2p+1})=t_1\lambda_2^{p-j}\gamma_2^{j+1} + t_2\lambda_2^{p-j-1}\nu_2\gamma_2^{j+1}
	\end{equation}
	with $t_1,t_2\in K$. 
	Applying $\Psi$, we obtain that 
	\begin{displaymath}
	u^p\dd_{2j+1}(\xi_{2p+1})=t_1 u^{p-j} \lambda_2^{p-j}\gamma_2^{j+1} + t_2 u^{p-j+1} \lambda_2^{p-j-1}\nu_2\gamma_2^{j+1}
	\end{displaymath}
	and, equating coefficients with those in (\ref{eqn: differential xi}), we get the conditions
	\begin{equation*}
	\begin{cases}
	t_1(1-u^{j}) & = 0, \\
	t_2(1-u^{j-1}) & = 0.
	\end{cases}
	\end{equation*}
	From these, we deduce that $t_1=0$ for all $j\geq 1$, and $t_2=0$ for all $j>1$. If $j=1$, applying $\Phi$ to (\ref{eqn: differential xi}) we deduce that $t_2(1-u^{p-3})=0$ and $t_2=0$ for $p\geq 5$. Therefore, $\xi_{2p+1}\in E_{2j+1}$ survives to $E_{2j+2}$. 
	
	If $m=2j$ with $j\geq 2$, the only case in which the differential might be non-trivial is $j=p$. We have that $$\dd_{2p}(\xi_{2p+1})=t \mu_2\gamma_2^p$$ with $t\in K$. Applying $\Phi$, we obtain that 
	\begin{displaymath}
	u^p\dd_{2p}(\xi_{2p+1})=t u^{p+2}\mu_2\gamma_2^p,
	\end{displaymath} 
	which implies that $t(1-u^2)=0$, and so $t=0$. Therefore, $\xi_{2p+1}\in E_{2j}$ survives to $E_{2j+1}$. 
\end{proof}

\begin{proposition}\label{Prop: nu3 infinity}
For $p\geq 3$, the element $\nu_3 \in E_3$ survives to $E_{\infty}$.
\end{proposition}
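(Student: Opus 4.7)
The plan is to imitate Proposition \ref{Prop: xi in E infinity} and use the induced automorphisms $\Phi^*$, $\Psi^*$ to constrain the higher differentials on $\nu_3$. Since $\nu_3 \in E_3^{0,3}$, the image $d_m(\nu_3)$ lies in $E_m^{m,\,4-m}$, which vanishes for $m \geq 5$. Hence only $d_3$ and $d_4$ need to be examined, and no induction is required.

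First I will identify the targets. By Theorem \ref{Thm: Structure E3}, $E_3^{3,1} = \gen{\mu_2 \gamma_2}$ (no $\nu_{2p}$ summand is possible in this bidegree). Similarly $E_4^{4,0} = E_3^{4,0} = E_2^{4,0} = \gen{\gamma_2^2}$: the only potentially non-zero incoming differential $d_3 \colon E_3^{1,2} = \gen{\mu_3} \to E_3^{4,0}$ vanishes because $\mu_3 \in E_\infty$ was established in Section \ref{sec: survivetoE3}.

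Now write $d_3(\nu_3) = t\,\mu_2 \gamma_2$ with $t \in K$. Applying $\Psi^*$, which commutes with differentials, and consulting the table in Section \ref{sec: InfinityPage} gives $\Psi^*(\nu_3) = u^2 \nu_3$ and $\Psi^*(\mu_2 \gamma_2) = u\,\mu_2 \gamma_2$, so $u^2 t = u t$, i.e.\ $t \bar u (\bar u - 1) = 0$ in $K$. Since $u$ generates $\mathcal{U}(\Z/p^n\Z)$, its reduction $\bar u \in \F_p^{\times}$ has order $p - 1 \geq 2$, hence $\bar u \neq 1$ and $t = 0$. For $d_4$, write $d_4(\nu_3) = t'\,\gamma_2^2$ and apply $\Phi^*$ instead: $\Phi^*(\nu_3) = u \nu_3$ and $\Phi^*(\gamma_2^2) = u^2 \gamma_2^2$, whence $u t' = u^2 t'$ and $t' = 0$ by the same argument.

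The only real point of choice, and thus the ``hard'' step, is pairing $\Psi^*$ with $d_3$ and $\Phi^*$ with $d_4$; the complementary pairing produces an equation of the form $t(1 - \bar u^2) = 0$, which is vacuous when $p = 3$ since then $\bar u^2 = 1$. Beyond checking this case distinction the argument is pure bookkeeping against the weight table.
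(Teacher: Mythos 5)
Your argument is correct and rests on the same mechanism as the paper's proof, namely comparing the weights of source and target of the differential under the induced automorphisms; but your bookkeeping is more complete than the paper's own argument on two counts, and both refinements matter. First, the paper's proof only treats $\dd_3(\nu_3)\in\gen{\mu_2\gamma_2}$ and is silent about $\dd_4\colon E_4^{0,3}\to E_4^{4,0}$; as you observe, $E_4^{4,0}=\gen{\gamma_2^2}$ is genuinely nonzero (the only incoming differential comes from $\gen{\mu_3}$ and vanishes because $\mu_3$ is a permanent cocycle), so the $\dd_4$ computation is a necessary step and your $\Phi^*$ argument killing $t'$ fills that gap. Second, for $\dd_3$ the paper applies $\Phi^*$ and arrives at $t(u^2-1)=0$; since the image of $u$ in $\F_p^{\times}$ has order $p-1$, this is vacuous precisely when $p=3$, so the paper's proof does not actually cover the full range $p\geq 3$ claimed in the statement. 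Your choice of $\Psi^*$ instead gives $t\,\bar u(\bar u-1)=0$ and hence $t=0$ for every $p\geq 3$. In short, the "hard step" you single out --- pairing $\Psi^*$ with $\dd_3$ and $\Phi^*$ with $\dd_4$ --- is exactly the right refinement, and your version of the proof is the one that establishes the proposition as stated.
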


\begin{proof} Observe that, for some $t\in K$, we have $\dd_3(\nu_3)=t\mu_2\gamma_3\in \langle \mu_2\gamma_2\rangle$. Applying $\Phi$ we obtain that $\Phi(\dd_3(\mu_3))=tu^2\mu_2\gamma_2$. Then, $t(u^2-1)\mu_2\gamma_2=0$ implies that $t=0$, as desired.
\end{proof}

\begin{proposition}\label{Prop: omega in E infinity}
	For $p\geq 3$, the elements $\omega_6,\omega_7,\dotsc, \omega_{2p+2}\in E_3$ survive to $E_{\infty}$.
\end{proposition}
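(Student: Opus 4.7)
The plan is to mimic the arguments of Propositions~\ref{Prop: xi in E infinity} and~\ref{Prop: nu3 infinity}: I would induct on the page number $m\geq 3$, assuming $\omega_k\in E_m$ for every $k\in\{6,\dots,2p+2\}$, and rule out a non-trivial $\dd_m(\omega_k)$ by applying the automorphisms $\Phi^*$ and $\Psi^*$ and reading off eigenvalue constraints from the table above. Before starting the induction, for $p\geq 5$ I can dispose of $\omega_6$ immediately: Theorem~\ref{Thm: Structure E3} gives $\omega_6=\tfrac{2}{3}\nu_3\mu_3$, a product of two generators already known to lie in $E_\infty$, and so $\omega_6\in E_\infty$ automatically. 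Only $\omega_7,\dots,\omega_{2p+2}$ then remain.

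Fix $k\geq 7$ and $m\geq 3$, so that $\dd_m(\omega_k)\in E_m^{m+1,k-m}$. By Theorems~\ref{Thm: Structure E2} and~\ref{Thm: Structure E3}, for $m=2j+1$ odd this target is contained in $D_2^{0,k-2j-1}\gamma_2^{j+1}$, and for $m=2j$ even in $D_3^{1,k-2j}\gamma_2^{j}$; in either case it is spanned by an explicit list of monomials $\zeta_\ell$ in the known generators. Writing
\begin{displaymath}
\dd_m(\omega_k)=\sum_\ell c_\ell\,\zeta_\ell,
\end{displaymath}
and using that both $\Phi^*$ and $\Psi^*$ act on $\omega_{2i}$ by $u^i$ and on $\omega_{2i+1}$ by $u^{i+1}$, each $\zeta_\ell$ yields two conditions of the shape $c_\ell(1-u^{a_\ell})=0$ and $c_\ell(1-u^{b_\ell})=0$, where $a_\ell,b_\ell$ are read off as sums of factor-eigenvalues from the table. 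Since $u$ has order $p^{n-1}(p-1)$ in $\mathcal{U}(\Z/p^n\Z)$, whenever either exponent is not divisible by $p^{n-1}(p-1)$ we conclude $c_\ell=0$.

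The main obstacle is to rule out the short list of ``eigenvalue-matching'' monomials for which $a_\ell\equiv b_\ell\equiv 0\pmod{p^{n-1}(p-1)}$; on these, $\Phi^*$ and $\Psi^*$ together impose no restriction. Exploiting $p\geq 5$ to keep the relevant factor-exponents of $u$ within the range $\{1,\dots,p-2\}$, a case analysis on the factor structure of $\zeta_\ell$ inside the bidegree strip $(m+1,k-m)$ with $k\leq 2p+2$ shows that any such matching monomial must be a $\gamma_2$- or $\nu_{2p}$-multiple of a class already known to be permanent; its coefficient is then forced to vanish by combining the injectivity of multiplication by $\gamma_2$ and $\nu_{2p}$ (Proposition~\ref{Prop: Multiplication generators E3}(i)--(ii)) with the permanence of that class, since a non-zero coefficient would exhibit a permanent class as a boundary. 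The case $p=3$ involves only $\omega_6,\omega_7,\omega_8$ and very few pages, and is handled by the same argument applied page-by-page.
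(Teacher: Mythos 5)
Your overall strategy coincides with the paper's: the page-by-page induction using the eigenvalue constraints coming from $\Phi^*$ and $\Psi^*$ (exactly as in Proposition \ref{Prop: xi in E infinity}, following Siegel), together with the observation that for $p\geq 5$ the element $\omega_6=\tfrac23\nu_3\mu_3$ is a product of permanent cycles and hence a permanent cycle by the Leibniz rule.

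The one step that would fail if it were ever needed is your fallback for the ``eigenvalue-matching'' monomials. You argue that if $\dd_m(\omega_k)=c\,\psi\gamma_2^{j}$ with $\psi$ permanent, then $c\neq0$ would ``exhibit a permanent class as a boundary''; but being a permanent cycle is perfectly compatible with being a boundary, and a product of classes that survive to $E_\infty$ can still die there. This paper itself furnishes a counterexample to your principle: $\lambda_1$, $\mu_2$ and $\gamma_2$ all survive to $E_\infty$, yet $\dd_2(\mu_4)=-\lambda_1\mu_2\gamma_2$ shows their product is a boundary already on page two. Injectivity of $\cdot\gamma_2$ or $\cdot\nu_{2p}$ on $E_3$ does not rescue this, since those statements say nothing about higher pages or about whether the factor $\psi$ is hit by a differential. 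Fortunately the fallback is never invoked: a direct check shows there are no matching monomials for the $\omega_k$. Indeed, for odd pages $m=2j+1$ the target lies in $E_2^{0,k-m}\gamma_2^{j+1}$, and comparing $\Psi^*$-eigenvalues of $\omega_k$ with those of every basis monomial there yields $c_\ell(1-u^{j})=0$ or $c_\ell(1-u^{j+1})=0$ with $1\leq j\leq p$ and $j+1\leq p+1<p^{n-1}(p-1)$ (here $n\geq 2$ is essential), so all coefficients vanish; for even pages $m=2j$ the target $D_3^{m+1,k-m}$ is non-zero only when $k-m\in\{0,1,2\}$, where the $\Psi^*$-eigenvalue comparison gives $t(1-u^{k/2})=0$, $t(1-u^{(k-1)/2})=0$ or $t(1-u^{k/2-1})=0$, and the exponents again lie strictly between $0$ and $p^{n-1}(p-1)$. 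Replacing your fallback with this verification gives the paper's argument, and it also covers $\omega_6,\omega_7,\omega_8$ for $p=3$ as you intend.
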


\begin{proof}
	The proof for the elements $\omega_7,\dotsc, \omega_{2p+2}\in E_3$ with any $p\geq 3$, and for $\omega_6$ with $p=3$, is analogous to the proof of Proposition \ref{Prop: xi in E infinity}, and can be done following the proof of \cite[Theorem 7]{Siegel96}. For $p\geq 5$, it is clear that $\omega_6=\frac23\nu_3\mu_3$ also survives to $E_{\infty}$.
\end{proof}

Therefore, Propositions \ref{Prop: xi in E infinity}, \ref{Prop: nu3 infinity} and \ref{Prop: omega in E infinity}  prove Theorem \ref{thm: TheoremA}, which we state below.

\begin{theorem}\label{thm: mainthm}
	Let $n\geq 2$ and let $p\geq 5$. Then, the LHSss $E$ associated to $G$ collapses in the third page, i.e. $E_3=E_{\infty}$.
\end{theorem}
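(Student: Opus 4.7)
The plan is a quick assembly of everything proved earlier. By Theorem \ref{Thm: Structure E3}(ii), for $p\geq 5$ the third page $E_3$ is generated as a bigraded $K$-algebra by the finite list
\[
\lambda_1,\lambda_2,\nu_2,\nu_3,\nu_{2p},\gamma_1,\gamma_2,\mu_2,\mu_3,\omega_7,\dotsc,\omega_{2p+2},\xi_{2p+1}.
\]
First I would cite the Proposition at the end of Section \ref{sec: survivetoE3}, which yields $\lambda_1,\lambda_2,\nu_2,\gamma_1,\gamma_2,\mu_2,\mu_3,\nu_{2p}\in E_\infty$. Propositions \ref{Prop: nu3 infinity}, \ref{Prop: omega in E infinity} and \ref{Prop: xi in E infinity} then take care of $\nu_3$, of the $\omega_i$'s, and of $\xi_{2p+1}$ respectively. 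This shows that $\dd_r$ vanishes on every multiplicative generator of $E_3$ for each $r\geq 3$.

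Next, since each $\dd_r$ is a derivation with respect to the cup product on $E_r$, the Leibniz rule
\[
\dd_r(xy)=\dd_r(x)\,y+(-1)^{\abs{x}}x\,\dd_r(y)
\]
implies that $\dd_r$ is determined by its values on any chosen set of multiplicative generators. Combined with the previous step, this forces $\dd_r\equiv 0$ on $E_r$ for every $r\geq 3$, so that $E_r=E_{r+1}$ for all such $r$ and therefore $E_3=E_{\infty}$, which is the claim.

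The substantive difficulty is hidden in Propositions \ref{Prop: xi in E infinity}, \ref{Prop: nu3 infinity} and \ref{Prop: omega in E infinity}, whose proofs exploit the action of the automorphisms $\Phi^{*},\Psi^{*}$ on $E_r$ to force the scalar coefficients of each potential image of $\dd_r$ to vanish; the assumption $p\geq 5$ enters precisely there (notably to make $(1-u^{p-3})$ invertible in the proof of Proposition \ref{Prop: xi in E infinity}). Given these inputs, the final assembly is essentially formal: the only care required is to have a complete list of algebra generators of $E_3$, which is exactly what Theorem \ref{Thm: Structure E3}(ii) provides, so the present theorem reduces to the three propositions cited above.
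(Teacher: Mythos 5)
Your proposal is correct and follows exactly the paper's route: the paper likewise deduces the theorem by combining Propositions \ref{Prop: xi in E infinity}, \ref{Prop: nu3 infinity} and \ref{Prop: omega in E infinity} with the survival results of Section \ref{sec: survivetoE3} and the generating set from Theorem \ref{Thm: Structure E3}(ii). Your explicit appeal to the Leibniz rule simply spells out the formal assembly that the paper leaves implicit.
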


\begin{remarks}~
	\begin{enumerate}
		\item[(i)] For $p=3$, following the proof of Proposition \ref{Prop: xi in E infinity}, we are only able to show that $\dd_3(\xi_7)= t\lambda_2 \nu_2 \gamma_2^2$ for some $t\in K$. If $t=0$, then $E_3=E_{\infty}$. Otherwise, the spectral sequence does not converge until at least the fourth page. This stands in contrast with \cite[Theorem 5]{Siegel96}, where it is shown that $E_2(\Heis(3))=E_{\infty}(\Heis(3))$.

		\item[(ii)] For $p\geq 5$, combining our result with \cite[Theorem 7]{Siegel96}, we have that $E_3(\Heis(p^n))=E_{\infty}(\Heis(p^n))$ for all $n\geq 1$.
	\end{enumerate}
\end{remarks}

\section{Poincar\'e series}\label{sec: PoincareSeries}

In this section, we will compute the Poincar\'e series of $\Cohom^{\bullet}(G)$, i.e. the power series 
\begin{displaymath}
P(t)=\sum_{k=0}^{\infty}\big(\dim H^{k}(G)\big)t^k=\sum_{k=0}^{\infty}\sum_{r=0}^{k}(\dim E_{\infty}^{r,k-r})t^k.
\end{displaymath}
Let $D_{\infty}=E_{\infty}/\gen{\nu_{2p}}=D_{3}$, which is the subring of $E_{\infty}$ generated by all the generators except for $\nu_{2p}$. Given that $E_{\infty}=K[\nu_{2p}]\otimes D_{\infty}$, in order to obtain the Poincar\'e series of $E_{\infty}$ we only need to compute the Poincar\'e series of $D_{\infty}$ and multiply it by the Poincar\'e series of $K[\nu_{2p}]$. 

For $k\geq 0$, write 
$$
D_{\infty}^k=\bigoplus_{r+s=k}D_{\infty}^{r,s},\quad \text{so that} \quad \dim D_{\infty}^k=\sum_{r=0}^k \dim D_{\infty}^{r,k-r}.
$$  
Then, the Poincar\'e series of $D_{\infty}$ is given by the power series $P_D(t)=\sum_{k=0}^{\infty}(\dim D_{\infty}^{k})t^k$,
and so we first need to obtain the values $\dim D_{\infty}^k$ for each $k\geq 0$. Note that, for every $r,s\geq 0$, the number $\dim D_{\infty}^{r,s}$ is computed in Theorem \ref{Thm: Structure E3}. Indeed, for $i\geq 0$, we have that 
\begin{align*}
\dim D_{\infty}^{1,s}&=\begin{cases}
1, & 0\leq s\leq 2p-1, \\
2, & s\geq 2p,
\end{cases}\quad \quad\quad
\dim D_{\infty}^{2i,s}=\begin{cases}
1, & s=0,1, \\
2, & s\geq 2,
\end{cases}\\
\dim D_{\infty}^{2i+3,s}&=\begin{cases}
1, & s=0,1,2,2p-1, \\
0, & 3\leq s\leq 2p-2,\\
2, & s\geq 2p.
\end{cases}
\end{align*}

This information can be showcased in the following table: 
\renewcommand{\arraystretch}{1.5}
\begin{small}\begin{figure}[H]
	\centering
       \begin{tabular}{r|c|c|c|c|c|c|c|c|}
		\hline
		$2p+1$	& 2 & 2 & 2 & 2 & 2& 2& 2\\ \hline
		$2p$	& 2 & 2 & 2 & 2 & 2& 2& 2\\ \hline
		$2p-1$	& 2 & 1 & 2 & 1 & 2& 1& 2\\ \hline
		$2p-2$	& 2 & 1 & 2 & 0 & 2& 0& 2\\ \hline 
		$\vdots$	& $\vdots$ & $\vdots$ & $\vdots$ & $\vdots$ & $\vdots$& $\vdots$& $\vdots$\\ \hline
		3	& 2 & 1 & 2 & 0 & 2& 0& 2\\ \hline
		2	& 2 & 1 & 2 & 1 & 2& 1& 2\\ \hline
		1	& 1 & 1 & 1 & 1 & 1& 1& 1\\ \hline
		0	& 1 & 1 & 1 & 1 & 1& 1& 1\\ \hline
		& 0 & 1 & 2 & 3 & 4& 5& 6
	\end{tabular}
	\caption{Dimension of $D_{\infty}^{r,s}$ for $0\leq r\leq 6$ and $0\leq s\leq 2p+1$.}
	\label{Fig: Table dim D infinity}
\end{figure}\end{small}
\renewcommand{\arraystretch}{1}

\begin{lemma}\label{Lem: dim D infinity}
	For $k\geq 0$, we have that 
	\begin{displaymath}
	\dim D_{\infty}^k= \begin{cases}
	k+1, &k=0,1, \\
	k+2, &k=2,3, \\
	k+3, &4\leq k\leq 2p, \\
	2k-2p+3, & k\geq 2p+1.
	\end{cases}
	\end{displaymath}
\end{lemma}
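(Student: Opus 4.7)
The plan is to sum the dimensions along anti-diagonals, i.e.\ compute
\[
\dim D_{\infty}^k \;=\; \sum_{r=0}^{k}\dim D_{\infty}^{r,k-r},
\]
using the explicit description of $\dim D_{\infty}^{r,s}$ encoded in Theorem \ref{Thm: Structure E3} (equivalently, in Figure \ref{Fig: Table dim D infinity}). I would split this sum as $A(k)+B(k)$, where $A(k)$ collects the contributions from even $r$ and $B(k)$ from odd $r$, since these two families follow different patterns.

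First I would dispatch $A(k)$. Because for every even $r\geq 0$ one has $\dim D_{\infty}^{r,s}=2$ when $s\geq 2$ and $=1$ when $s\in\{0,1\}$, the only terms that are not equal to $2$ occur when $k-r\in\{0,1\}$, and exactly one such boundary term appears on each anti-diagonal ($s=0$ if $k$ is even, $s=1$ if $k$ is odd). A direct count then gives $A(k)=k+1$ when $k$ is even and $A(k)=k$ when $k$ is odd.

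The substantive work is $B(k)$. Recall that the odd-row dimensions split into three regimes along $s$: the boundary values $s\in\{0,1,2,2p-1\}$ (dimension $1$), the middle range $3\leq s\leq 2p-2$ (dimension $0$ for $r\geq 3$, but dimension $1$ for $r=1$), and the stable range $s\geq 2p$ (dimension $2$). I would check $k\leq 3$ by hand and then proceed by cases:
\begin{itemize}
\item For $4\leq k\leq 2p$: the term $r=1$ always contributes $1$. For $r\geq 3$ odd, the anti-diagonal meets $\{0,1,2,2p-1\}$ at most twice (depending on the parity of $k$), while all intermediate odd $r$ fall in the vanishing band and contribute $0$. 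A parity-sensitive count gives $B(k)=2$ when $k$ is even (in this range) and $B(k)=3$ when $k$ is odd, so that $A(k)+B(k)=k+3$.
\item For $k\geq 2p+1$: one splits the odd $r$ according to where $s=k-r$ lies. The odd $r$ with $s\geq 2p$ each contribute $2$, and there are $\lceil(k-2p)/2\rceil$ of them; the middle band still contributes nothing; and the boundary values $s\in\{2p-1,0,1,2\}$ hit one or two odd $r$'s depending on the parity of $k$. Summing yields $B(k)=k-2p+2$ when $k$ is even and $B(k)=k-2p+3$ when $k$ is odd, so in either parity $A(k)+B(k)=2k-2p+3$.
\end{itemize}

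The main obstacle is keeping parities straight at the transition $k=2p$, and in particular making sure the single anti-diagonal term with $s=2p-1$ (coming from $r=1$ when $k$ even, or from $r=k-2p+1\geq 3$ when $k$ odd in the second regime) is counted exactly once. Once this bookkeeping is done carefully, the four cases of the stated formula fall out directly.
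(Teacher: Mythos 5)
Your proof is correct and follows essentially the same route as the paper: both arguments simply sum $\dim D_{\infty}^{r,k-r}$ along anti-diagonals using the dimensions recorded in Theorem \ref{Thm: Structure E3} (Figure \ref{Fig: Table dim D infinity}), the only difference being that you partition the sum by the parity of $r$ while the paper partitions it into ranges of $r$. One small slip in your closing remark: on the $k$-th anti-diagonal the term with $s=2p-1$ sits at $r=k-2p+1$, which is odd (and hence falls into the special odd-$r$ boundary behaviour) precisely when $k$ is \emph{even}, not odd; this does not affect your computation, since the values $B(k)=k-2p+2$ for $k$ even and $B(k)=k-2p+3$ for $k$ odd are correct.
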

\begin{proof}
	The values $\dim D_{\infty}^k$ for $0\leq k\leq 3$ can be easily computed from the table in Figure \ref{Fig: Table dim D infinity}. Let $4\leq k\leq 2p$ and write $k=2i+\varepsilon$ with $\varepsilon=0,1$. Then, we can compute 
		\begin{align*}
			\sum_{r=2}^{k-3}\dim D_{\infty}^{r,k-r}&=2(i-2+\varepsilon)=k-4+\varepsilon,\\
			\dim D_{\infty}^{k-2,2}&=2-\varepsilon.
		\end{align*}	
		Therefore, we obtain that 
		\begin{align*}
		\dim D_{\infty}^k&=\sum_{r=2}^{k-3}\dim D_{\infty}^{r,k-r}+\dim D_{\infty}^{k-2,2}+5= (k-4+\varepsilon)+(2-\varepsilon)+5= k+3.
		\end{align*}
		Let now $k\geq 2p+1$ and write $k=2i+\varepsilon$ with $\varepsilon=0,1$. Then, we can compute the following values:
		\begin{align*}
		\sum_{r=0}^{k-2p}\dim D_{\infty}^{r,k-r}&=2(k-2p+1)=2k-4p+2, 	& \dim D_{\infty}^{k-2p+1,2p-1}&=1+\varepsilon,\\
		\sum_{r=k-2p+2}^{k-3}\dim D_{\infty}^{r,k-r}&=2(p-2)=2p-4,& \dim D_{\infty}^{k-2,2}&=2-\varepsilon.
		\end{align*}
		Therefore, we obtain that 
		\begin{align*}
		\dim D_{\infty}^k&=\sum_{r=0}^{k-2p}\dim D_{\infty}^{r,k-r}+\dim D_{\infty}^{k-2p+1,2p-1}+\sum_{r=k-2p+2}^{k-3}\dim D_{\infty}^{r,k-r}+\dim D_{\infty}^{k-2,2}+2\\
		&= (2k-4p+2)+(2p-4)+(1+\varepsilon)+(2-\varepsilon)+2 \\
		& = 2k-2p+3.
		\end{align*}
\end{proof}

As a result, we can compute the Poincar\'e series of $\Cohom^{\bullet}(G)$.

\begin{theorem}
	The Poincar\'e series of $\Cohom^{\bullet}(G)$ is 
	\begin{displaymath}
	P(t)=\frac{1+t^2-t^3+t^4-t^5+t^{2p+1}}{(1-t)^2(1-t^{2p})}.
	\end{displaymath}
\end{theorem}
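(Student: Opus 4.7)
The plan is to reduce the computation to a simple second-difference check on the integer sequence $d_k := \dim_K D_{\infty}^k$ from Lemma \ref{Lem: dim D infinity}. Since $E_{\infty} = K[\nu_{2p}]\otimes D_{\infty}$ with $|\nu_{2p}|=2p$, the Poincar\'e series factors as
\[
P(t) \;=\; \frac{P_D(t)}{1-t^{2p}}, \qquad P_D(t)=\sum_{k\geq 0} d_k\,t^k.
\]
Clearing $(1-t^{2p})$ on the right-hand side of the claimed identity, the theorem is equivalent to
\[
(1-t)^2\, P_D(t) \;=\; 1+t^2-t^3+t^4-t^5+t^{2p+1},
\]
i.e.\ all coefficients $c_k := d_k - 2d_{k-1}+d_{k-2}$ (with $d_{-1}=d_{-2}=0$) vanish except $c_0=c_2=c_4=c_{2p+1}=1$ and $c_3=c_5=-1$.

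First I would tabulate the initial values $d_0=1,\ d_1=2,\ d_2=4,\ d_3=5,\ d_4=7,\ d_5=8$ directly from Lemma \ref{Lem: dim D infinity}, and compute the second differences $c_0,\ldots,c_5$ by hand to obtain the constant term and the $t^2,t^3,t^4,t^5$ contributions. Then, using that $d_k=k+3$ on the whole range $4\leq k\leq 2p$, a routine check shows $c_k=(k+3)-2(k+2)+(k+1)=0$ for $6\leq k\leq 2p$, so no coefficients appear in that middle range (here the hypothesis $p\geq 5$ ensures $2p\geq 6$, so this range is non-degenerate). At the transition $k=2p+1$ I plug in $d_{2p-1}=2p+2,\ d_{2p}=2p+3,\ d_{2p+1}=2p+5$ to get $c_{2p+1}=1$, and at $k=2p+2$ I use $d_{2p+2}=2p+7$ to get $c_{2p+2}=0$. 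Finally, since $d_k=2k-2p+3$ is linear for $k\geq 2p+1$, the second differences vanish identically for $k\geq 2p+2$.

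Assembling the nonzero coefficients yields exactly $(1-t)^2 P_D(t)=1+t^2-t^3+t^4-t^5+t^{2p+1}$, and dividing by $(1-t)^2(1-t^{2p})$ gives the claimed $P(t)$. The main obstacle is purely bookkeeping: keeping track of the three piecewise regimes in the formula for $d_k$ and the boundary indices $k=2,3,4,5,2p,2p+1,2p+2$ where they meet. There is no conceptual difficulty beyond that, since Lemma \ref{Lem: dim D infinity} has already done the combinatorial work of counting the basis elements on each antidiagonal of $D_{\infty}$.
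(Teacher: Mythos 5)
Your proposal is correct and follows essentially the same route as the paper: both reduce to $P_D(t)$ via the factorization $E_{\infty}=K[\nu_{2p}]\otimes D_{\infty}$ and then rely entirely on Lemma \ref{Lem: dim D infinity}; your verification of the closed form by checking that the second differences $d_k-2d_{k-1}+d_{k-2}$ reproduce the numerator is just a reorganization of the paper's direct summation, and all your boundary values check out.
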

\begin{proof}
	Using Lemma \ref{Lem: dim D infinity}, we can compute the Poincar\'e series for $D_{\infty}$ as follows:
	\begin{align*}
	P_{D}(t)& =\sum_{k=0}^{\infty}(\dim D_{\infty}^k) t^k \\
	& =1+2t+4t^2+5t^3+\sum_{k=4}^{2p}(k+3)t^k+\sum_{k=2p+1}^{\infty}(2k-2p+3)t^k \\
	& = \frac{1+t^2-t^3+t^4-t^5+t^{2p+1}}{(1-t)^2}.
	\end{align*}
	Therefore, because $E_{\infty}=K[\nu_{2p}]\otimes D_{\infty}$, we have that 
	\begin{displaymath}
	P(t)=\frac{P_{D}(t)}{(1-t^{2p})}=\frac{1+t^2-t^3+t^4-t^5+t^{2p+1}}{(1-t)^2(1-t^{2p})}.
	\end{displaymath}
\end{proof}

\section{Conclusion and further questions}\label{sec: conclusion}

We follow the notation introduced in Section \ref{sec: notation}. As a consequence of Theorem \ref{thm: mainthm}, we obtain that, for a prime number $p\geq 5$, the LHSss $E$ of $G$ are isomorphic from the second page on as bigraded $K$-algebras. We have not however determined the ring structure of $\HH^{\bullet}\big(\Heis(p^n)\big)$ and we encourage the ambitious reader to do so. 

Assume now that $K$ is a finite field of characteristic $p$. Then, by \cite[Theorem 2.1]{Carlson05}, there are finitely many liftings of $E_{\infty}\big(\Heis(p^n)\big)$ to the cohomology ring $\HH^{\bullet}\big(\Heis(p^n)\big)$. This in particular yields the following result.

\begin{corollary}\label{cor: conclusion}
Let $p\geq 5$ be a prime number. Then, there are only finitely many isomorphism types of $K$-algebras in the infinite collection $\{\HH^{\bullet}(\Heis(p^n))\}_{n\geq 1}$.
\end{corollary}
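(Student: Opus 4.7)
The plan is to combine the structural result of Theorem \ref{thm: TheoremA} with Carlson's finiteness theorem for algebra liftings. First, I would use Theorem \ref{thm: TheoremA}(i) and (ii) to observe that, for fixed $p \geq 5$, all the infinity pages $E_{\infty}(\Heis(p^n))$ with $n \geq 2$ are mutually isomorphic as bigraded $K$-algebras; call this common algebra $A$. The explicit generators and relations produced in Theorem \ref{Thm: Structure E3} show that $A$ is a finitely generated $K$-algebra with finite-dimensional graded pieces, as reflected in the Poincar\'e series computed in Section \ref{sec: PoincareSeries}.

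Next, I would interpret each cohomology ring $\HH^{\bullet}(\Heis(p^n))$, equipped with the filtration coming from the LHS spectral sequence, as a filtered graded $K$-algebra whose associated graded ring is $E_{\infty}(\Heis(p^n)) \cong A$. That is, each such cohomology ring is a lifting of $A$ in the sense of \cite[Theorem 2.1]{Carlson05}. Carlson's theorem asserts that, when $K$ is a finite field, a fixed finitely generated bigraded $K$-algebra admits only finitely many non-isomorphic graded liftings. Applying this to $A$ immediately yields that $\{\HH^{\bullet}(\Heis(p^n))\}_{n \geq 2}$ contains only finitely many isomorphism types, and adjoining the single ring $\HH^{\bullet}(\Heis(p))$ preserves finiteness.

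The main obstacle is purely a matter of bookkeeping: making the lifting statement precise and checking that the hypotheses of \cite[Theorem 2.1]{Carlson05} are satisfied by $A$, namely finite generation, finite-dimensionality of each graded component, and finiteness of the base field. Finite generation of $A$ is explicit in Theorem \ref{Thm: Structure E3}(ii); finite-dimensionality in each total degree follows from the basis description and the Poincar\'e series; and the finiteness of $K$ is the standing hypothesis of this section. No further input beyond citing Carlson's theorem is required.
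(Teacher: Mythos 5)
Your proposal is correct and follows essentially the same route as the paper: Theorem \ref{thm: TheoremA} gives a single common associated graded algebra $E_{\infty}(\Heis(p^n))$ for all $n\geq 2$, and \cite[Theorem 2.1]{Carlson05} (under the standing assumption that $K$ is finite) bounds the number of liftings of that fixed bigraded algebra to actual cohomology rings, with the single case $n=1$ absorbed trivially. The paper's own proof is just a terser version of this same two-step argument.
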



The above result is in slight analogy with the previously obtained results in the area \cite{Carlson05}, \cite{DGG17}, \cite{DGG18}, \cite{GG19}, \cite{Symonds21}. Let $\mathbb{G}(-)$ denote an affine group scheme over a ring. For example, the Heisenberg group $\widehat{G}$ and the group $G$ are obtained by applying  such a functor $\mathbb{G}(-)$ to $\Z$ and to $\Z/p^n\Z$, respectively. The presentation of the cohomology rings of such groups is intrinsically hard to obtain. For instance, in \cite{Quillen72}, Quillen described the cohomology rings of the general linear groups $\GL_n(K)$ over a field $K$ of characteristic $p$ with coefficients in a finite field $F$ of characteristic coprime to $p$. However, the case where $K$ and $F$ have the same characteristic is widely open. Based on Corollary \ref{cor: conclusion}, we ask whether the following conjecture holds or not. 

\begin{Conjecture}\label{conjecture}
Let $p$ be a prime number and let $\mathbb{G}(-)$ be an affine group scheme over the $p$-adic integers $\Z_p$. Then, there exists a natural number $f=f(p, \mathbb{G})$ that depends only on $p$ and on $\mathbb{G}$, such that for each $p$ and for all $n\geq f$, the cohomology rings $\HH^{\bullet}(\mathbb{G}(\Z_p/p^{n}\Z_p);K)$ are isomorphic, where $K$ is a field of characteristic $p$ with trivial $\mathbb{G}(\Z_p/p^n\Z_p)$-action.
\end{Conjecture}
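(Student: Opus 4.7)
The plan is to emulate the strategy behind Theorem \ref{thm: mainthm} in full generality: exploit the principal congruence filtration on $\mathbb{G}(\Zp)$ and show that the associated Lyndon--Hochschild--Serre spectral sequences eventually become isomorphic as $n$ grows. Concretely, for $n$ large enough I would study the reduction extension
\[
1 \to K_n \to \mathbb{G}(\Zp/p^{n+1}\Zp) \to \mathbb{G}(\Zp/p^n\Zp) \to 1,
\]
where, under mild smoothness assumptions on $\mathbb{G}$, the kernel $K_n$ is elementary abelian of rank $\dim \mathbb{G}$, independently of $n$. This is the scheme-theoretic analogue of the extension \eqref{eq: splitextension} used throughout the paper.

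First I would establish a structural stabilisation: for $n$ sufficiently large (depending on $p$ and $\mathbb{G}$), the conjugation action of $\mathbb{G}(\Zp/p^n\Zp)$ on $\HH^{\bullet}(K_n;K)$ factors through a fixed finite quotient of $\mathbb{G}(\Zp)$. This mirrors the phenomenon in Section \ref{sec: notation} that the action of $\sigma$ on $\HH^{\bullet}(M)$ is independent of $n$ as soon as $n\geq 2$. The natural tool is $p$-adic Lie theory \`a la Lazard: the principal congruence subgroups of $\mathbb{G}(\Zp)$ are uniformly powerful past some level, so the Lie-theoretic adjoint action transfers mod $p$ and is eventually stable. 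From this, an inductive bootstrap would show that the second page
\[
E_2^{r,s}\big(\mathbb{G}(\Zp/p^{n+1}\Zp)\big) = \HH^r\big(\mathbb{G}(\Zp/p^n\Zp);\HH^s(K_n;K)\big)
\]
is independent of $n$ up to canonical isomorphism.

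The next and substantially more delicate step is to control the differentials. Generalising the Charlap--Vasquez/Siegel machinery encoded in Theorem \ref{Thm: Charlap Vasquez cyclic}, one would express $\dd_2$ through an explicit cocycle representing the extension class in $\HH^2(\mathbb{G}(\Zp/p^n\Zp);K_n)$; past the stabilisation threshold, these classes are compatible under inflation, so the image of $\dd_2$ stabilises as well. Vanishing of the higher differentials could then be argued either by generalising the automorphism/symmetry technique of Section \ref{sec: InfinityPage}, using scaling automorphisms coming from $\Zp^{\times}$-actions on the coordinates of $\mathbb{G}$ when such exist, or by a direct dimension count comparing $E_{\infty}$ against a continuous cohomology of $\mathbb{G}(\Zp)$ serving as a common pro-$p$ limit.

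The hard part will be obtaining enough symmetry to kill higher differentials in the absence of a canonical presentation: already for $\Heis(p^n)$ this required the explicit automorphisms $\Phi,\Psi$ and delicate case analysis, and for a general $\mathbb{G}$ one has no canonical supply of such automorphisms. A possibly more robust route is to invoke Symonds' bound~\cite{Symonds21}, which controls the number of generators and relations of $\HH^{\bullet}(\mathbb{G}(\Zp/p^n\Zp);K)$ uniformly in terms of the $p$-rank, itself bounded independently of $n$ by $\dim \mathbb{G}$, and to combine this finiteness with the stabilisation of the LHS spectral sequence to force only finitely many isomorphism classes among the $\HH^{\bullet}(\mathbb{G}(\Zp/p^n\Zp);K)$; upgrading \emph{finiteness} to \emph{eventual constancy} is the remaining, genuinely new, difficulty, and would likely require a deformation-theoretic argument identifying the cohomology ring with an invariant of the formal group of $\mathbb{G}$.
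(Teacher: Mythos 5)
This statement is Conjecture \ref{conjecture}: the paper does not prove it, and neither do you. The authors only offer heuristic evidence (the Quillen categories of the groups $\mathbb{G}(\Z_p/p^n\Z_p)$ are isomorphic, so the rings are $F$-isomorphic; by Weigel's theorem the first congruence subgroups are powerful $p$-central with the $\Omega$-extension property, so their cohomology and the outer actions on it stabilise, whence the $E_2$-pages of the associated LHS spectral sequences are isomorphic as bigraded $K$-algebras for $n\geq 2$). Your proposal is a research programme with the same two gaps that make this a conjecture rather than a theorem, and you name them yourself: (a) you give no mechanism for stabilising or killing the differentials for a general $\mathbb{G}$ --- the automorphism trick of Section \ref{sec: InfinityPage} depends on an explicit supply of scaling automorphisms that a general affine group scheme need not provide, and the Charlap--Vasquez/Siegel machinery of Theorem \ref{Thm: Charlap Vasquez cyclic} requires an explicit resolution; and (b) even granting that $E_\infty$ is eventually independent of $n$, \cite[Theorem 2.1]{Carlson05} (as used for Corollary \ref{cor: conclusion}) yields only \emph{finitely many} liftings of $E_\infty$ to the cohomology ring, and upgrading ``finitely many isomorphism types'' to ``eventually constant'' is precisely the open content of the conjecture. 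So there is a genuine gap; indeed the entire difficulty survives intact in your sketch.

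One structural point where your set-up diverges from the paper's, to your disadvantage: the paper's heuristic filters $\mathbb{G}(\Z_p/p^n\Z_p)$ by its \emph{first congruence subgroup}, producing extensions with a \emph{fixed} quotient $\mathbb{G}(\Z_p/p\Z_p)$ and kernels whose cohomology (and the action on it) is independent of $n\geq 2$ by Weigel --- this is what makes the claim ``the $E_2$-pages are isomorphic'' meaningful and non-circular. You instead use the reduction extension $1\to K_n\to \mathbb{G}(\Z_p/p^{n+1}\Z_p)\to \mathbb{G}(\Z_p/p^n\Z_p)\to 1$, whose $E_2$-page $\HH^r\big(\mathbb{G}(\Z_p/p^n\Z_p);\HH^s(K_n;K)\big)$ has as base the very group whose cohomology you are trying to stabilise; at best this gives an induction that propagates stability you have not yet established, and it says nothing about the differentials. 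If you pursue this line, switch to the congruence-subgroup filtration and confront the differentials there.
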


The first reason to support the previous conjecture is that the Quillen categories of the groups $\mathbb{G}(\Z_p/p^n\Z_p)$ are isomorphic. That is, the cohomology rings $\HH^{\bullet}(\mathbb{G}(\Z_p/p^n\Z_p);K)$ are $F$-isomorphic (see \cite{Quillen71}). Secondly, observe that for each $n\geq 2$, there is an extension 
\[
\mathbb{G}^{1}(\Z_p/p^n\Z_p) \rightarrow \mathbb{G}(\Z_p/p^n\Z_p)\rightarrow \mathbb{G}(\Z_p/p\Z_p),
\]
where $\mathbb{G}^{1}(\Z_p/p^n\Z_p)$ denotes the first congruence subgroup of $\mathbb{G}(\Z_p/p^n\Z_p)$. It is known that $\mathbb{G}^{1}(\Z_p/p^n\Z_p)$ is a powerful $p$-central group with the $\Omega$-extension property and thus, for every $n\geq 2$, the cohomology rings $\HH^{\bullet}(\mathbb{G}^1(\Z_p/p^n\Z_p);K)$ are isomorphic (\cite{Weigel00}). Moreover, the actions of $\mathbb{G}(\Z_p/p\Z_p)$ on $\HH^{\bullet}(\mathbb{G}^{1}(\Z_p/p^n\Z_p);K)$ are isomorphic, in the sense of \cite[Definition 5.5]{DGG17}.  In turn, the spectral sequences $E_2(\mathbb{G}(\Z_p/p^n\Z_p))$ are isomorphic as bigraded $K$-algebras. Therefore, based on \cite[Conjecture 6.1]{DGG17}, we would expect that the above conjecture holds by taking $f$ to be equal to $2$.


\newpage

\begin{appendix}

\appendix

\section{Generalisation of Siegel's result}\label{Appendix: Siegel}


In this section, we will state a theorem by Charlap and Vasquez \cite{Charlap69} regarding the computation of the second differential of the LHSss associated to a split extension of finite groups and then provide a generalization of \cite[Corollary 2]{Siegel96} for split extensions of cyclic $p$-groups. 

We start by introducing the necessary definitions and notation to state the aforementioned result by Charlap and Vasquez. Let $G=Q\ltimes M$ be a split extension of $Q$ by the finite group $M$ and let $V$ be a $K G$-module with trivial $M$-action.

Let $X_{\bullet}\longrightarrow K$ be a projective $K G$-resolution, let $Y_{\bullet}\longrightarrow K$ be the $K Q$-bar resolution and let $P_{\bullet}\longrightarrow K$ be the minimal $K M$-resolution. If $E=E(G)$ is the LHSss associated to the split extension of $Q$ by $M$, the following identifications hold (\cite[Section 7.2]{Evens91}):
\begin{align}\label{eq: E1 description}
E_0 &= \Hom_{KQ}\big(Y_{\bullet}, \Hom_{KM}(X_{\bullet},V)\big),\nonumber\\
E_1 &= \Hom_{KQ}\big(Y_{\bullet}, \Hom_{KM}(P_{\bullet},V)\big).
\end{align}
For each $g\in Q$, we write $P_{\bullet}^g$ for the $K M$-complex with underlying $K$-complex $P_{\bullet}$ and $M$-action given by  
$$
\text{for} \; \; h\in M \; \text{ and }\;  x\in P_{\bullet}, \; \text{ set }\;\; h\cdot x=h^{g^{-1}}x.
$$
Also, for every $i\in \N$, we write $\Hom_{KM}(P_{\bullet},P_{\bullet}^g)_i$ to denote $\prod_{k=0}^{i}\Hom_{KM}(P_{k},P_{k+i}^g)$. Then, for each $g,g'\in Q$ the Comparison Theorem guarantees (see \cite[Theorem 2.4.2 ]{Benson91} and subsequent remark) the existence of maps $A(g)\in \Hom_{K M}(P_{\bullet},P_{\bullet}^{g})_0$ and $U(g,g')\in \Hom_{K M}(P_{\bullet},P_{\bullet}^{gg'})_1$ satisfying the following conditions:
\begin{enumerate}
	%
	\item[(i)] $\partial A(g)-A(g)\partial=0$ and $\varepsilon A(g)-\varepsilon=0$,
	
	\item[(ii)] $\partial U(g,g')+U(g,g')\partial=A(gg')-A(g)A(g')$. 
\end{enumerate}

\begin{theorem}[{\cite[Theorem 1]{Siegel96}}]\label{thm: SiegelAppendix}
	Let $A$ and $U$ as above. Let $r\geq0,s\geq1$ and suppose that $\zeta\in E_2^{r,s}$ is represented by $f\in \Hom_{K M}(P_s,V)$. Then $\dd_2(\zeta)$ is represented by $(-1)^rD_2(f)$, where 
	\begin{displaymath}
	D_2(f)[g_1|\dotsb |g_{r+2}]=g_1g_2\circ f[g_3|\dotsb |g_{r+2}]\circ U(g_2^{-1},g_1^{-1}).
	\end{displaymath}
\end{theorem}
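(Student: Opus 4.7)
My plan is to construct an explicit free $KG$-resolution $X_{\bullet}$ of $K$ built from $Y_{\bullet}$ and $P_{\bullet}$, so that both the spectral-sequence filtration and its low-degree differentials can be read off directly. The natural candidate is a twisted tensor product whose underlying graded $K$-module is $Y_{\bullet} \otimes P_{\bullet}$, with a $KG$-structure in which $Q$ acts diagonally through its canonical action on $Y$ and through the chain maps $A(g)$ on $P$, and with a total differential
\[
\partial_X = \partial_Y \otimes 1 \;+\; (-1)^{\bullet} \otimes \partial_P \;+\; (\text{correction terms involving } U).
\]
The compatibility equations $\partial A - A\partial = 0$ and $\partial U + U\partial = A(gg') - A(g)A(g')$ are exactly what is needed for $\partial_X^2 = 0$, and filtering by $Y$-degree yields acyclicity of $X_{\bullet}$ because $P_{\bullet}$ is itself a resolution.

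With $X_{\bullet}$ in hand, I would verify the identifications \eqref{eq: E1 description} via the tensor-hom adjunction: the filtration by $Y$-degree of $\Hom_{KG}(X_{\bullet}, V)$ yields the LHSss, with $E_0^{r,s} = \Hom_{KQ}(Y_r, \Hom_{KM}(X_s, V))$, and the $KM$-acyclicity of $X_{\bullet}$ in positive $P$-degree lets us pass to $E_1^{r,s} = \Hom_{KQ}(Y_r, \Hom_{KM}(P_s, V))$ as claimed.

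To extract $d_2$, I would follow the standard zig-zag through the double complex: represent $\zeta \in E_2^{r,s}$ by $f \in \Hom_{KM}(P_s, V)$, lift it to $E_0^{r,s}$ by composing with the canonical chain map $X_s \to P_s$ (which at higher $Y$-degree drags in the maps $A(g)$), apply $\partial_X^{\ast}$, and then pick out the component landing in $E_0^{r+2, s-1}$. The only piece of the differential that contributes at this bidegree is the $U$-correction; tracking Koszul signs and the contravariance built into $\Hom$ produces the prefactor $(-1)^r$, the composition with $U(g_2^{-1},g_1^{-1})$ (with arguments dualised and reversed in order), and the overall action of $g_1 g_2$ on the target $V$. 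The group elements $g_1,g_2$ appear in front because two bar-resolution steps are taken to arrive in horizontal degree $r+2$.

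The main obstacle is sign and indexing bookkeeping: ensuring that the twisted differential is well-defined, that $X_{\bullet}$ really is a $KG$-resolution, and that the specific form $U(g_2^{-1},g_1^{-1})$ (rather than some conjugate or permuted variant) is what emerges from the zig-zag. Once the twisted resolution is set up with correct conventions, everything else is formal, but pinning down these signs and the inversion pattern of the $g_i$ is where the delicate work lies.
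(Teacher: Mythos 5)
The paper does not actually prove this statement: it is quoted verbatim from Siegel \cite[Theorem~1]{Siegel96}, whose proof in turn rests on the Charlap--Vasquez theorem \cite{Charlap69}. Your outline is essentially the argument underlying those sources --- build a $KG$-resolution out of $Y_{\bullet}\otimes P_{\bullet}$ twisted by the comparison data $A$, $U$, filter by $Y$-degree, and read off $\dd_2$ from the $U$-component of the staircase --- so you are on the intended route rather than a genuinely different one.

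There is, however, one concrete gap in your sketch. You assert that the two compatibility conditions $\partial A(g)-A(g)\partial=0$ and $\partial U(g,g')+U(g,g')\partial=A(gg')-A(g)A(g')$ are ``exactly what is needed for $\partial_X^2=0$.'' They are not: with a total differential of the form $\partial_Y\otimes 1+(-1)^{\bullet}\otimes\partial_P+(\text{$U$-correction})$, the square $\partial_X^2$ has components lowering the $Y$-degree by $0$, $1$, $2$ \emph{and} $3$. Conditions (i) and (ii) kill only the first three; the degree-$3$ component involves the associativity defect $U(g,g')A(g'')-A(g)U(g',g'')-U(gg',g'')+U(g,g'g'')$ (up to sign), which is merely null-homotopic, not zero. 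To get an honest resolution one must either adjoin higher homotopies $U_3,U_4,\dotsc$ (a twisted/$A_\infty$-type resolution, which is what Wall's and Charlap--Vasquez's constructions do), or avoid building $X_{\bullet}$ altogether and argue directly at the level of $E_1$-cochains that the truncated data $(A,U)$ suffices to compute $\dd_2$ because the higher corrections only affect $\dd_k$ for $k\geq 3$. Either fix is standard, but as written your resolution need not exist, and the acyclicity argument by filtration does not apply to a non-complex. The remaining issues you flag (Koszul signs, the inversion and reversal in $U(g_2^{-1},g_1^{-1})$, the prefactor $g_1g_2$ acting on $V$) are genuine bookkeeping but are correctly identified and resolvable.
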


Although the previous result is for a split extension of a general finite group $Q$, it requires the use of the $K Q$-bar resolution of $K$.  In \cite{Siegel96}, the previous result has been extended for the minimal resolution of a cyclic group $Q$ of size $p$. We  generalise Siegel's result to the case where $Z_{\bullet}\longrightarrow K$ is the minimal $K Q$-resolution  with $Z_k=KQ e_k$, for $k\geq 0$, and where
 $Q=C_{p^n}$ is a cyclic $p$-group of size $p^n$, with $n\geq1$.

\subsection{Proof of Theorem 5.1}\label{sec: GeneralisationSiegel}

The aim of this section is to finish the proof of Theorem \ref{thm: generalisationappendix}. We follow the notation introduced in the beginning of Appendix \ref{Appendix: Siegel} and additionally assume that $Z_{\bullet}\longrightarrow K$ is the minimal $K Q$-resolution  with $Z_k=KQ e_k$, for $k\geq 0$, and where $Q=C_{p^n}$ is a cyclic $p$-group of size $p^n$, with $n\geq1$.  
Under those hypotheses, the first page of the LHSss described in \eqref{eq: E1 description} can be identified with 
\begin{displaymath}
E_1 = \Hom_{KQ}\big(Z_{\bullet}, \Hom_{KM}(P_{\bullet},V)\big).
\end{displaymath}
In order to use Theorem \ref{thm: SiegelAppendix} for the above description of the spectral sequence, we first need explicit chain maps between the bar resolution $Y_{\bullet}$ and the minimal resolution $Z_{\bullet}$. For that purpose, we define the following maps:
	\begin{enumerate}
		\item[(i)] For $k\geq 1$ and $0\leq i_1,\dotsc,i_{2k+1}\leq p^n-1$, let $\theta\colon Y_{\bullet}\longrightarrow Z_{\bullet}$ be a $K$-map that satisfies the next identifications: 
		\begin{align*}
		\theta[]&=e_0, \\
		\theta[\sigma^{i_1}]&=e_1, \\
		\theta[\sigma^{i_1}| \dotsb| \sigma^{i_{2k}}] &=\begin{cases}
		e_{2k}, & \text{if }i_{2j-1}+i_{2j}\geq p^n\text{ for all } 1\leq j\leq k, \\
		0, & \text{otherwise},
		\end{cases} \\
		\theta[\sigma^{i_1}| \dotsb| \sigma^{i_{2k+1}}] &=\begin{dcases}
		\sum_{i=0}^{i_1-1}\sigma^i e_{2k+1}=N_{i_1}(\sigma)e_{2k+1}, & \text{if }i_{2j}+i_{2j+1}\geq p^n\text{ for all } 1\leq j\leq k, \\
		0, & \text{otherwise}.
		\end{dcases}
		\end{align*}

		\item[(ii)] For $k\geq 1$, let $\eta\colon Z_{\bullet} \longrightarrow Y_{\bullet}$ be a $K$-map that satisfies the following identifications:
		\begin{align*}
		\eta(e_0) & = [], \\
		\eta(e_1) & = [\sigma], \\
		\eta(e_{2k}) & = \sum_{0\leq i_1,\dotsc,i_{k}< p^n} [\sigma^{i_1}| \sigma| \dotsb| \sigma^{i_k}| \sigma], \\
		\eta(e_{2k+1}) & = \sum_{0\leq i_1,\dotsc,i_{k}< p^n} [\sigma| \sigma^{i_1}| \dotsb| \sigma | \sigma^{i_k}| \sigma]. \\
		\end{align*}
	\end{enumerate}

\begin{lemma}\label{Lem: Chain maps C-V}
	The above maps $\theta$ and $\eta$ are $K$-chain maps.
\end{lemma}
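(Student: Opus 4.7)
The plan is to verify the chain-map identities $\partial_Z \theta = \theta \partial_Y$ and $\partial_Y \eta = \eta \partial_Z$ directly from the explicit formulas, treating the two maps separately. The arguments are essentially combinatorial bookkeeping once one understands the role of the condition ``$i_{2j-1}+i_{2j}\ge p^n$'' in the definition of $\theta$: this condition detects precisely when the modular sum $\sigma^{i_{2j-1}}\sigma^{i_{2j}} = \sigma^{i_{2j-1}+i_{2j}-p^n}$ ``wraps around'', i.e.\ when merging two consecutive bar entries produces an element whose exponent is smaller than $i_{2j-1}$, and this is exactly what distinguishes the norm differential $\partial(e_{2k})=N(\sigma)e_{2k-1}$ from the multiplication-by-$(\sigma-1)$ differential $\partial(e_{2k+1})=(\sigma-1)e_{2k}$ in $Z_\bullet$.

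For $\eta$, I would verify the identity on the basis elements $e_k$. In odd degree, $\partial_Y \eta(e_{2k+1})$ expands by the bar differential to give $\sigma \eta(e_{2k})-\eta(e_{2k})$ plus a collection of middle terms of the shape $[\sigma|\sigma^{i_1}|\cdots|\sigma^{i_j+1}|\sigma|\cdots|\sigma^{i_k}|\sigma]$ that cancel in pairs upon re-indexing the summation variables $i_j$ over $\Z/p^n\Z$; this matches $\eta\partial_Z(e_{2k+1}) = (\sigma-1)\eta(e_{2k})$. In even degree, the analogous expansion of $\partial_Y \eta(e_{2k})$ collapses to $N(\sigma)\eta(e_{2k-1})$, matching $\eta\partial_Z(e_{2k}) = \eta(N(\sigma)e_{2k-1})$. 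The cancellations here are the standard telescoping that arises when summing the bar differential over a complete set of coset representatives of $\gen{\sigma}$.

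For $\theta$, I would split into four cases according to the parities of the source and target degrees. Writing out $\theta\partial_Y[\sigma^{i_1}|\cdots|\sigma^{i_k}]$ produces a sum over the $k-1$ merging positions plus two boundary terms, and in each merged bracket the indicator ``every pair of consecutive arguments sums to at least $p^n$'' has to be re-examined after the two exponents at position $j$, $j+1$ have been combined. A careful inspection shows that almost all merged terms vanish because the parity condition in the definition of $\theta$ forces one of the resulting consecutive pairs to fail the inequality; the surviving contributions assemble into exactly $(\sigma-1)\theta[\sigma^{i_2}|\cdots|\sigma^{i_{2k+1}}]$ or $N(\sigma)\theta[\sigma^{i_2}|\cdots|\sigma^{i_{2k}}]$ as appropriate, matching $\partial_Z \theta$.

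The main obstacle is purely the bookkeeping in the second verification: one has to track how the wrap-around indicator behaves under each of the $k$ merging operations that the bar differential generates, and to see that the only surviving contributions are the two extremal ones (which produce the $\sigma-1$ factor) or, in the even-to-odd case, the single term in which a factor $N_{i_1}(\sigma)$ emerges from the definition of $\theta$ on odd chains and combines to $N(\sigma)$. Once this combinatorial pattern is isolated, the verification is routine, so in the main text I would only write out the odd-to-even case in detail and leave the other three to the reader.
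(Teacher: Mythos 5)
Your treatment of $\eta$ matches the paper's: expand the bar differential, re-index the summation variables over $\Z/p^n\Z$ so that the interior faces cancel in pairs, and what remains is $N(\sigma)\eta(e_{2k-1})$ (resp.\ $(\sigma-1)\eta(e_{2k})$). That half is fine and is exactly how the paper argues.

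The $\theta$ half has a genuine gap. You claim that ``almost all merged terms vanish because the parity condition forces one of the resulting consecutive pairs to fail the inequality,'' but that is not the mechanism, and in the generic case it is false: for a tuple $[\sigma^{i_1}|\dotsb|\sigma^{i_{2k}}]$ in which \emph{every} consecutive pair sums to at least $p^n$, none of the interior faces is forced to vanish. What actually makes the computation close is the identity that whenever $i_{2j}+i_{2j+1}\geq p^n$ and $i_{2j+1}+i_{2j+2}\geq p^n$, merging at position $2j$ and merging at position $2j+1$ produce tuples with the \emph{same} list of consecutive sums, hence the same image under $\theta$; since these two faces carry opposite signs in $\theta\partial$, they cancel \emph{whether or not they are zero}. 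The remaining boundary contributions are then controlled by the telescoping relation $N_{i_1+i_2}(\sigma)=N_{i_1}(\sigma)+\sigma^{i_1}N_{i_2}(\sigma)$ together with $N_{i_1+i_2-p^n}(\sigma)=N_{i_1+i_2}(\sigma)-N(\sigma)$ in the wrap-around case, and one must still split into four cases according to whether some ``even--odd'' pair and/or some ``odd--even'' pair fails the inequality (the locations of the first and last failures determine which faces survive). Your target formulas are also incorrect: $\partial_Z\theta[\sigma^{i_1}|\dotsb|\sigma^{i_{2k+1}}]=N_{i_1}(\sigma)(\sigma-1)e_{2k}=(\sigma^{i_1}-1)e_{2k}$, which is not $(\sigma-1)\theta[\sigma^{i_2}|\dotsb|\sigma^{i_{2k+1}}]$, and $N(\sigma)\theta[\sigma^{i_2}|\dotsb|\sigma^{i_{2k}}]$ equals $i_2\,N(\sigma)e_{2k-1}$ rather than $N(\sigma)e_{2k-1}$. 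Without the pairwise-cancellation identity and the correct bookkeeping of the partial norms $N_{i_1}(\sigma)$, the verification for $\theta$ does not go through.
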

\begin{proof} We start by showing that $\theta$ is a chain map. To that aim, we need to show that for all $k\geq 1$, the following equalities hold $(\partial\theta-\theta\partial)(Y_{2k})=(\partial\theta-\theta\partial)(Y_{2k+1})=0$. We will only show the equality for the even case, $Y_{2k}$, as the odd case follows similarly. Observe that, for every $1\leq j\leq k-1$ such that $i_{2j}+i_{2j+1}\geq p^n$ and $i_{2j+1}+i_{2j+2}\geq p^n$, we have that 
	\begin{displaymath}
	(i_{2j}+i_{2j+1} \mod p^n)+ i_{2j+2}= i_{2j}+i_{2j+1} -p^n + i_{2j+2} = i_{2j}+ (i_{2j+1} + i_{2j+2}\mod p^n),
	\end{displaymath}
	and thus 
	\begin{equation}\label{eqn: Theta equal}
	\theta[\sigma^{i_1}| \dotsb| \sigma^{i_{2j}+i_{2j+1}}| \sigma^{i_{2j+2}}|\dotsb|\sigma^{i_{2k}}] =\theta[\sigma^{i_1}| \dotsb| \sigma^{i_{2j}}| \sigma^{i_{2j+1}+i_{2j+2}}|\dotsb|\sigma^{i_{2k}}].
	\end{equation}
	Also note that, if there is some $1\leq l\leq k-1$ such that $i_{2l}+i_{2l+1}<p^n$, then
	\begin{equation}\label{eqn: Theta 0}
	\theta[\sigma^{i_1}|\dotsb| \sigma^{i_{2l}}|\dotsb | \sigma^{i_{j}+i_{j+1}} |\dotsb |\sigma^{i_{2k}}]=0,
	\end{equation}
	for every $2l+2\leq j\leq 2k-1$. Therefore, using (\ref{eqn: Theta 0}) we obtain that
	\begin{align}\label{eqn: Theta l}
	\theta\partial[\sigma^{i_1}|\dotsb|\sigma^{i_{2k}}] & = \theta \big(\sigma^{i_1}[\sigma^{i_2}|\dotsb|\sigma^{i_{2k}}]\big) + \sum_{j=1}^{2l+1}(-1)^j\theta[\sigma^{i_1}|\dotsb|\sigma^{i_j+i_{j+1}}|\dotsb|\sigma^{i_{2k}}] \nonumber\\
	& \hspace{30pt} + \sum_{j=2l+2}^{2k-1}(-1)^j\theta[\sigma^{i_1}|\dotsb|\sigma^{i_j+i_{j+1}}|\dotsb|\sigma^{i_{2k}}] +\theta[\sigma^{i_1}|\dotsb|\sigma^{i_{2k}}] \nonumber\\
	& = \theta \big(\sigma^{i_1}[\sigma^{i_2}|\dotsb|\sigma^{i_{2k}}]\big) + \sum_{j=1}^{2l+1}(-1)^j\theta[\sigma^{i_1}|\dotsb|\sigma^{i_j+i_{j+1}}|\dotsb|\sigma^{i_{2k}}].
	\end{align}
	
	Analogously, if there is some $1\leq m\leq k$ such that $i_{2m-1}+i_{2m}<p^n$, then 
	\begin{equation}\label{eqn: Theta m}
	\theta\partial[\sigma^{i_1}|\dotsb|\sigma^{i_{2k}}]= \sum_{j=2m-2}^{2k-1}(-1)^j\theta[\sigma^{i_1}|\dotsb|\sigma^{i_j+i_{j+1}}|\dotsb|\sigma^{i_{2k}}] +\theta[\sigma^{i_1}|\dotsb|\sigma^{i_{2k-1}}].
	\end{equation}
	
	In order to show that equations \eqref{eqn: Theta l} and \eqref{eqn: Theta m} are identical, we need to distinguish four different cases: 
	\begin{enumerate}
		\item[(i)] There is a smallest $l$ with $1\leq l\leq k-1$ such that $i_{2l}+i_{2l+1}<p^n$, and a largest $m$ with $1\leq m\leq k$ such that $i_{2m-1}+i_{2m}<p^n$. 
		\item[(ii)] There is a largest $m$ with $1\leq m\leq k$ such that $i_{2m-1}+i_{2m}<p^n$, but $i_{2j}+i_{2j+1}\geq p^n$ for every $1\leq j\leq k-1$. 
		

		
		\item[(iii)] There is a smallest $l$ with $1\leq l\leq k-1$ such that $i_{2l}+i_{2l+1}<p^n$, but $i_{2j-1}+i_{2j}\geq p^n$ for every $1\leq j\leq k$. 
		
		
		\item[(iv)] For every $1\leq j\leq k$, we have that $i_{2j-1}+i_{2j}\geq p^n$, and for every $1\leq j'\leq k-1$, we have that $i_{2j'}+i_{2j'+1}\geq p^n$ . 
		
		

	\end{enumerate}

We will study the first case carefully and we omit the rest of the cases as the steps to follow are identical. On the one hand, we can easily see that 
		\begin{displaymath}
		\partial\theta[\sigma^{i_1}|\dotsb|\sigma^{i_{2k}}]=0.
		\end{displaymath}
		On the other hand, for $m>l+1$, it is clear that 
		\begin{displaymath}
		\theta\partial[\sigma^{i_1}|\dotsb|\sigma^{i_{2k}}]=0.
		\end{displaymath}
		Furthermore, the equalities in (\ref{eqn: Theta l}) and  (\ref{eqn: Theta m}) yield that, for $2\leq m\leq l+1$,
		\begin{align}\label{eqn: Theta Partial 1}
		\theta\partial[\sigma^{i_1}|\dotsb|\sigma^{i_{2k}}] = \sum_{j=2m-2}^{2l+1}(-1)^j\theta[\sigma^{i_1}|\dotsb|\sigma^{i_j+i_{j+1}}|\dotsb|\sigma^{i_{2k}}].
		\end{align}
		If $2\leq m\leq l$, using (\ref{eqn: Theta equal}), the expression (\ref{eqn: Theta Partial 1}) is reduced to
		\begin{align*}
		\theta\partial[\sigma^{i_1}|\dotsb|\sigma^{i_{2k}}] & = \theta[\sigma^{i_1}|\dotsb|\sigma^{i_{2m-2}+i_{2m-1}}|\dotsb|\sigma^{i_{2k}}]- \theta[\sigma^{i_1}|\dotsb|\sigma^{i_{2m-1}+i_{2m}}|\dotsb|\sigma^{i_{2k}}] \\
		& \hspace{30pt} +\theta[\sigma^{i_1}|\dotsb|\sigma^{i_{2l}+i_{2l+1}}|\dotsb|\sigma^{i_{2k}}]- \theta[\sigma^{i_1}|\dotsb|\sigma^{i_{2l+1}+i_{2l+2}}|\dotsb|\sigma^{i_{2k}}] \\
		& = 0-N_{i_1}(\sigma)e_{2k-1}+ N_{i_1}(\sigma)e_{2k-1} -0 \\
		& = 0.
		\end{align*}
		Likewise, if $m=l+1$ we obtain that 
		\begin{align*}
		\theta\partial[\sigma^{i_1}|\dotsb|\sigma^{i_{2k}}] & =\theta[\sigma^{i_1}|\dotsb|\sigma^{i_{2m-2}+i_{2m-1}}|\dotsb|\sigma^{i_{2k}}]- \theta[\sigma^{i_1}|\dotsb|\sigma^{i_{2m-1}+i_{2m}}|\dotsb|\sigma^{i_{2k}}] \\
		& = N_{i_1}(\sigma)e_{2k-1}- N_{i_1}(\sigma)e_{2k-1}\\
		& =0.
		\end{align*}
		
		Finally, if $m=1$ then
		\begin{align*}
		\theta\partial[\sigma^{i_1}|\dotsb|\sigma^{i_{2k}}] & = \theta \big(\sigma^{i_1}[\sigma^{i_2}|\dotsb|\sigma^{i_{2k}}]\big) + \sum_{j=1}^{2l+1}(-1)^j\theta[\sigma^{i_1}|\dotsb|\sigma^{i_j+i_{j+1}}|\dotsb|\sigma^{i_{2k}}] \\
		& = \theta \big(\sigma^{i_1}[\sigma^{i_2}|\dotsb|\sigma^{i_{2k}}]\big) -\theta[\sigma^{i_1+i_2}|\dotsb|\sigma^{i_{2k}}] \\
		& \hspace{30pt} +\theta[\sigma^{i_1}|\dotsb|\sigma^{i_{2l}+i_{2l+1}}|\dotsb|\sigma^{i_{2k}}]- \theta[\sigma^{i_1}|\dotsb|\sigma^{i_{2l+1}+i_{2l+2}}|\dotsb|\sigma^{i_{2k}}] \\
		& = \sigma^{i_1}N_{i_2}(\sigma)e_{2k-1} - N_{i_1+i_2}(\sigma)e_{2k-1} + N_{i_1}(\sigma)e_{2k-1} -0 \\
		&=0.
		\end{align*}
		
Let us now show that $\eta$ is a chain map. Once again, we will focus on the even case and  only show that  $(\partial\eta-\eta\partial)(e_{2k})=0$ for $k\geq 1$. On the one hand, because the initial sum covers all possible exponents $0\leq i_1,\dotsc,i_{k}< p^n$, it is easy to see that 
	\begin{align*}
	\scalebox{0.85}{$\displaystyle\sum_{0\leq i_1,\dotsc,i_{k}< p^n}\sum_{j=1}^{k-1}[\sigma^{i_1}|\dotsb|\sigma|\sigma^{i_j}|\sigma^{i_{j+1}+1}|\sigma|\dotsb| \sigma^{i_k}| \sigma]$} & = \scalebox{0.85}{$\displaystyle \sum_{{0\leq i_1,\dotsc,i_{k}< p^n}} \sum_{j=1}^{k-1}[\sigma^{i_1}|\dotsb|\sigma|\sigma^{i_j+1}|\sigma^{i_{j+1}}|\sigma|\dotsb| \sigma^{i_k}| \sigma]$}, \\
	\sum_{0\leq i_1,\dotsc,i_{k}< p^n}[\sigma^{i_1}|\dotsb|\sigma|\sigma^{i_k}] & =\sum_{0\leq i_1,\dotsc,i_{k}< p^n}[\sigma^{i_1}|\dotsb|\sigma|\sigma^{i_k+1}],
	\end{align*}
	and so we have that 
	\begin{align*}
	\partial\eta(e_{2k}) &= \partial\bigg(\sum_{0\leq i_1,\dotsc,i_{k}< p^n}[\sigma^{i_1}| \sigma| \dotsb| \sigma^{i_k}| \sigma]\bigg) \\
	& = \scalebox{0.85}{$\displaystyle\sum_{0\leq i_1,\dotsc,i_{k}< p^n}\bigg(\sigma^{i_1}[\sigma|\sigma^{i_2}| \dotsb| \sigma^{i_k}| \sigma] - \sum_{j=1}^{k}[\sigma^{i_1}|\dotsb|\sigma^{i_{j-1}}|\sigma|\sigma^{i_j+1}|\sigma^{i_{j+1}}|\sigma|\dotsb| \sigma^{i_k}| \sigma]$} \\
	& \hspace{50pt} \scalebox{0.85}{$\displaystyle+ \sum_{j=1}^{k-1}[\sigma^{i_1}|\dotsb|\sigma^{i_{j-1}}|\sigma|\sigma^{i_j}|\sigma^{i_{j+1}+1}|\sigma|\dotsb| \sigma^{i_k}| \sigma] + [\sigma^{i_1}|\dotsb|\sigma|\sigma^{i_k}]\bigg)$} \\
	& = \sum_{0\leq i_1,\dotsc,i_{k}< p^n}\sigma^{i_1}[\sigma|\sigma^{i_2}| \dotsb| \sigma^{i_k}| \sigma].
	\end{align*}
	On the other hand,
	\begin{align*}
	\eta\partial(e_{2k}) &= \eta\bigg(\sum_{i=0}^{p^n-1}\sigma^ie_{2k-1}\bigg) \\
	& = \sum_{0\leq i,i_1,\dotsc,i_{k-1}< p^n}\sigma^i[\sigma|\sigma^{i_1}|  \dotsb | \sigma| \sigma^{i_{k-1}}| \sigma].
	\end{align*}
	Therefore, $(\partial\eta-\eta\partial)(e_{2k})=0$.
\end{proof}


We will now state and prove Theorem \ref{Thm: Charlap Vasquez cyclic}.

\begin{theorem}\label{thm: generalisationappendix}
	Let $\alpha\colon P_{\bullet}\longrightarrow P_{\bullet}^{\sigma^{-1}}$ be a $K M$-chain map commuting with the augmentation, and $\tau\in \Hom_{K M}(P_{\bullet},P_{\bullet})_1$ such that $\partial \tau+\tau\partial =1-\alpha^{p^n}$. Suppose that $\zeta\in E_2^{r,s}$ with $r\geq0,s\geq1$ is represented by $f\in \Hom_{K M}(P_s,V)$. Then $\dd_2(\zeta)$ is represented by $(-1)^rf\circ \tau$.
\end{theorem}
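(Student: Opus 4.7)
My plan is to deduce Theorem \ref{Thm: Charlap Vasquez cyclic} by reducing it to Siegel's version of the Charlap--Vasquez formula (Theorem \ref{thm: SiegelAppendix} in the appendix, stated for the bar resolution $Y_\bullet$ of $K$ over $KQ$) and then transferring the conclusion to the minimal resolution $Z_\bullet$ via the chain maps $\theta$ and $\eta$ of Lemma \ref{Lem: Chain maps C-V}. Since $\eta\colon Z_\bullet \to Y_\bullet$ is a chain map of projective $KQ$-resolutions of $K$, precomposition with $\eta$ realizes an isomorphism between the two descriptions of the spectral sequence using $\Hom_{KQ}(Y_\bullet,-)$ versus $\Hom_{KQ}(Z_\bullet,-)$. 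It therefore suffices to verify that the representative $(-1)^r D_2(f)$ furnished by Theorem \ref{thm: SiegelAppendix}, after precomposition with $\eta$ and evaluation on the generator $e_{r+2}$ of $Z_{r+2}$, equals $(-1)^r f\circ \tau$ modulo coboundaries.

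To apply Theorem \ref{thm: SiegelAppendix} I would first construct families $A(g)\in \Hom_{KM}(P_\bullet, P_\bullet^g)_0$ and $U(g,g')\in \Hom_{KM}(P_\bullet, P_\bullet^{gg'})_1$ satisfying the required cocycle conditions from the given $\alpha$ and $\tau$. Taking representatives in $\{0,\ldots,p^n-1\}$ for $Q=C_{p^n}$, the natural choice is $A(1)=\mathrm{id}$ and $A(\sigma^i)=\alpha^{p^n-i}$ for $1\leq i\leq p^n-1$; since $\alpha\colon P_\bullet\to P_\bullet^{\sigma^{-1}}$, the composition $\alpha^{p^n-i}$ is indeed a $KM$-chain map $P_\bullet\to P_\bullet^{\sigma^i}$. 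In every case the discrepancy $A(\sigma^{i+j\bmod p^n})-A(\sigma^i)A(\sigma^j)$ is a power of $\alpha$ times $(1-\alpha^{p^n})$, which by hypothesis equals $\partial\tau+\tau\partial$ up to the same power of $\alpha$, so a suitable combination of $\alpha$-powers and $\tau$ supplies the required homotopy $U(\sigma^i,\sigma^j)$; in particular, $U(\sigma^i,\sigma^j)$ can be chosen to vanish whenever $i+j<p^n$.

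With $A$ and $U$ fixed, Theorem \ref{thm: SiegelAppendix} represents $\dd_2(\zeta)$ by $(-1)^r D_2(f)$, where $D_2(f)[g_1|g_2|g_3|\ldots|g_{r+2}]=g_1g_2\cdot f[g_3|\ldots|g_{r+2}]\circ U(g_2^{-1},g_1^{-1})$. Precomposing with $\eta(e_{r+2})$ and splitting by the parity of $r$, only those bar summands of $\eta(e_{r+2})$ contribute for which $g_1^{-1}+g_2^{-1}\geq p^n$ in the chosen representatives, since otherwise $U(g_2^{-1},g_1^{-1})=0$. For the surviving summands, $U(g_2^{-1},g_1^{-1})$ equals a controlled $\alpha$-power composed with $\tau$, and summing over the indexing tuples of $\eta(e_{r+2})$ causes the leading $g_1g_2$-action together with the $\alpha$-powers to telescope and collapse, leaving precisely $f\circ\tau$ as the residual contribution.

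The principal technical difficulty will be this final step: systematically tracking the exponents of $\alpha$ appearing in each $U(g_2^{-1},g_1^{-1})$ and verifying that the sum over the many surviving bar summands of $\eta(e_{r+2})$ indeed collapses cleanly to $f\circ\tau$ with the correct sign $(-1)^r$. The argument is patterned on Siegel's proof of his Corollary 2 for the case $n=1$, but our setting weakens the relation $\alpha^p=\mathrm{id}$ to the mere homotopy $\alpha^{p^n}\simeq\mathrm{id}$ and inserts the auxiliary chain maps $\theta,\eta$ between $Y_\bullet$ and $Z_\bullet$, so additional bookkeeping must be carried through term-by-term; this is the reason the detailed computation is deferred to the appendix.
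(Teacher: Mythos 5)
Your proposal takes essentially the same route as the paper, whose proof of this theorem is literally a reduction to Siegel's Corollary 2 via Theorem \ref{thm: SiegelAppendix}, the chain maps $\theta$ and $\eta$ of Lemma \ref{Lem: Chain maps C-V}, and the substitution of $p^n$ for $p$. One bookkeeping correction for when you carry out the deferred computation: with your (natural) choice $A(1)=\mathrm{id}$ and $A(\sigma^i)=\alpha^{p^n-i}$ for $1\leq i\leq p^n-1$, the discrepancy $A(\sigma^{i+j\bmod p^n})-A(\sigma^i)A(\sigma^j)$ vanishes identically precisely when $i+j>p^n$ or $ij=0$, equals $1-\alpha^{p^n}$ when $i+j=p^n$, and equals $\alpha^{p^n-i-j}(1-\alpha^{p^n})$ when $i,j\geq 1$ and $i+j<p^n$; so $U(\sigma^i,\sigma^j)$ can be chosen to vanish for $i+j>p^n$, not for $i+j<p^n$ as you wrote. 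With this corrected vanishing condition the surviving bar summands of $\eta(e_{r+2})$ are exactly those whose leading block $[\sigma^{i_1}|\sigma|\cdots]$ (respectively $[\sigma|\sigma^{i_1}|\cdots]$ in the odd case) has $i_1=p^n-1$, for which $U(g_2^{-1},g_1^{-1})=\tau$ and $g_1g_2=1$, so the sum collapses to the single term $(-1)^rf\circ\tau$ with no telescoping of $\alpha$-powers actually required.
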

\begin{proof}
	The proof of this result can be done by following that of \cite[Corollary 2]{Siegel96}, using the chain maps from Lemma \ref{Lem: Chain maps C-V} and writing $p^n$ instead of $p$ where appropriate. 
\end{proof}

 \subsection{Proof of Lemma \ref{Lem: alpha tau}}\label{Appendix: Computations}

In this section, we will give the explicit computations required in the proof of Lemma \ref{Lem: alpha tau}. To that aim, we display the equalities that will be used during our computations while the proof of such properties is left for the reader.



\begin{lemma} \label{Lem: Identities 1} Let $a,b$ denote the generators of $M$ and let $e_j^i$ be as in \eqref{eq: eij}. 
	\begin{enumerate}
		\item[(i)]  The following identities hold: 
		\begin{align*}
			\rho(b-1) & =bN(ab)-N(a), & \rho(a-1)  &=N(b)-N(ab), \\
			\rho(ab-1) & =N(b)-N(a), & \kappa(a-1)  &=-N(a).
		\end{align*}

		\item[(ii)] The differential of the elements $e_j^i$ is as follows:
		\begin{align*}
			\partial(e^{2i}_{2j}) & =N(a)e^{2i-1}_{2j}+N(b)e^{2i-1}_{2j-1}, & \partial(e^{2i}_{2j+1})  &=(a-1)e^{2i-1}_{2j+1}-(b-1)e^{2i-1}_{2j}, \\
			\partial(e^{2i+1}_{2j}) & =(a-1)e^{2i}_{2j}-N(b)e^{2i}_{2j-1},&  \partial(e^{2i+1}_{2j+1})  &=N(a)e^{2i}_{2j+1}+(b-1)e^{2i}_{2j}.
		\end{align*}
	\end{enumerate}
\end{lemma}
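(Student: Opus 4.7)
Both parts of the lemma are direct computations: part (i) consists of identities in the commutative group algebra $KM$, while part (ii) follows from the Koszul sign rule for the differential on the tensor product complex $P_\bullet=P'_\bullet\otimes P''_\bullet$, applied to the known minimal resolutions over $K\langle a\rangle$ and $K\langle b\rangle$.

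For part (i), the plan is to expand the definitions of $\rho=\sum_{0\le j\le i<p^n}a^ib^j$ and $\kappa=\sum_{i=0}^{p^n-1}(i+1)a^i$ and telescope. For $\rho(b-1)$, fixing $i$ and summing over $0\le j\le i$ gives $a^i(b^{i+1}-1)$, so $\rho(b-1)=\sum_{i=0}^{p^n-1}a^ib^{i+1}-\sum_{i=0}^{p^n-1}a^i$; since $M$ is abelian, the first sum equals $b\sum_{i=0}^{p^n-1}(ab)^i=bN(ab)$, and the second is $N(a)$. The identities for $\rho(a-1)$ and $\rho(ab-1)$ follow by the same pattern: one writes $\rho\cdot a$ as $\sum_{i=1}^{p^n}\sum_{j=0}^{i-1}a^ib^j$ (and $\rho\cdot ab$ as $\sum_{i=1}^{p^n}\sum_{j=1}^{i}a^ib^j$), extracts the $i=p^n$ term using $a^{p^n}=1$ to produce $N(b)$, and cancels the remaining part against $\rho$ to leave $-N(ab)$ (respectively $-N(a)$). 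Finally, for $\kappa(a-1)$, the shift yields $\kappa\cdot a=\sum_{i=1}^{p^n}i\cdot a^i$; the term $i=p^n$ contributes $p^n\cdot 1=0$ since $\mathrm{char}\,K=p$, and a second telescoping collapses $\kappa\cdot a-\kappa$ to $-N(a)$.

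For part (ii), the plan is to invoke the standard differential on the tensor product complex: writing $e^i_j=e'_{i-j}\otimes e''_j$, we have
\[
\partial(e^i_j)=\partial(e'_{i-j})\otimes e''_j+(-1)^{i-j}\,e'_{i-j}\otimes\partial(e''_j).
\]
Since $P'_\bullet\longrightarrow K$ is the standard minimal resolution over $K\langle a\rangle$, its differentials alternate as $\partial e'_{2k}=N(a)e'_{2k-1}$ and $\partial e'_{2k+1}=(a-1)e'_{2k}$, and likewise for $P''_\bullet$ with $b$. Substituting into the four parity cases for $(i,j)$ and re-expressing each resulting tensor back into the notation of \eqref{eq: eij} (noting $e^{i-1}_j=e'_{(i-1)-j}\otimes e''_j$ while $e^{i-1}_{j-1}=e'_{i-j}\otimes e''_{j-1}$) yields exactly the four displayed formulas.

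The computations are essentially bookkeeping; the only subtle point is the role of characteristic $p$ in killing the would-be boundary term $p^n\cdot a^{p^n}$ of the $\kappa(a-1)$ identity, without which the identity would fail. In part (ii), the only place to slip up is keeping the Koszul sign $(-1)^{i-j}$ consistent across the four parity cases and correctly matching indices when converting back to $e^{i-1}_{\bullet}$.
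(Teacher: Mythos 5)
Your computations are correct and complete: the telescoping arguments for the four identities in (i) (including the use of $\mathrm{char}\,K=p$ to kill the $p^n a^{p^n}$ term) and the Koszul-sign bookkeeping for the tensor-product differential in (ii) all check out. The paper explicitly leaves the verification of this lemma to the reader, and your direct computation is precisely the intended argument.
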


%
%
%
%
%
%
%
%

\vspace{1.5mm}
\begin{proposition}\label{pro: alpha chain map}
	The map $\alpha$ is a chain map, i.e. $\partial\alpha-\alpha\partial=0$.
\end{proposition}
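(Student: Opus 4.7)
The strategy is to verify the chain map identity $(\partial\alpha - \alpha\partial)(e^I_J) = 0$ separately in each of the four parity cases for the basis element $e^I_J$, using the formulas for $\partial e^I_J$ from Lemma \ref{Lem: Identities 1}(ii) to expand both sides, and then reducing the resulting expressions via the commutation identities of Lemma \ref{Lem: Identities 1}(i). Throughout, I must remember that $\alpha$ is a $KM$-linear map into the \emph{twisted} complex $P_{\bullet}^{\sigma^{-1}}$, so that $\alpha(h\cdot x)=h^{\sigma}\alpha(x)$ for $h\in M$. In particular, this yields the crucial identities $\alpha(N(a)x)=N(ab)\alpha(x)$, $\alpha(N(b)x)=N(b)\alpha(x)$, $\alpha((a-1)x)=(ab-1)\alpha(x)$, and $\alpha((b-1)x)=(b-1)\alpha(x)$.

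Take for instance the case $e^{2i}_{2j}$. On the left-hand side, applying $\partial$ to $\alpha(e^{2i}_{2j})=\sum_{k}\binom{k}{j}(e^{2i}_{2k}-\rho e^{2i}_{2k+1})$ produces, after substituting the differential formulas, terms $N(a)e^{2i-1}_{2k}$, $N(b)e^{2i-1}_{2k-1}$, $-\rho(a-1)e^{2i-1}_{2k+1}$, and $\rho(b-1)e^{2i-1}_{2k}$. The identities $\rho(a-1)=N(b)-N(ab)$ and $\rho(b-1)=bN(ab)-N(a)$ convert this into a combination involving $N(b)$, $N(ab)$, and $bN(ab)$. On the right-hand side, $\alpha\partial(e^{2i}_{2j})$ equals $N(ab)\,\alpha(e^{2i-1}_{2j})+N(b)\,\alpha(e^{2i-1}_{2j-1})$ after the twist, and expanding via the defining formulas for $\alpha$ on odd indices produces the same combination of terms. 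Matching coefficients of $e^{2i-1}_{2m}$ and $e^{2i-1}_{2m+1}$ reduces the equality to Pascal's identity $\binom{m+1}{j}-\binom{m}{j}=\binom{m}{j-1}$.

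The remaining three cases $e^{2i}_{2j+1}$, $e^{2i+1}_{2j}$, $e^{2i+1}_{2j+1}$ follow the same template: expand both sides, apply the relevant Lemma \ref{Lem: Identities 1}(i) identity (for the case $e^{2i+1}_{2j+1}$, the identity $\rho(ab-1)=N(b)-N(a)$ enters, since the differential there involves both $(b-1)$ and $N(a)$ which combine to give an $(ab-1)$-like factor after twisting), and collapse via Pascal's rule. The term-by-term matching handles boundary values correctly under the convention $\binom{m}{j}=0$ for $m<j$ together with $e^{I}_{J}=0$ whenever $J>I$ or $J<0$, so the apparent boundary contributions at $k=i$ (where $e^{2i-1}_{2i}=0$ but $e^{2i-1}_{2i-1}\neq 0$) are absorbed automatically.

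The main obstacle is purely bookkeeping rather than conceptual: one must correctly track the twist at every instance where $\alpha$ is applied to a term containing a module element of $KM$, and one must carefully align the index shifts between even and odd layers so that the Pascal identity can be invoked. The coefficients $\binom{k}{j}$ appearing in the definition of $\alpha$ are precisely what makes the combinatorics work out; the identities of Lemma \ref{Lem: Identities 1}(i) are precisely what converts the untwisted side into the twisted side. With those two ingredients in place, each case collapses to a short computation, and I would delegate the explicit verifications for the three remaining cases to the appendix along the lines of the $e^{2i}_{2j}$ case above.
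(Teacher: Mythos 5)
Your proposal is correct and follows essentially the same route as the paper: verify the identity on the basis element $e^{2i}_{2j}$ by expanding both sides with Lemma \ref{Lem: Identities 1}(ii), converting via the identities of Lemma \ref{Lem: Identities 1}(i) (keeping track of the twist $\alpha(hx)=h^{\sigma}\alpha(x)$), and reducing the comparison of coefficients to Pascal's rule, with the remaining three cases handled analogously. The only quibble is the parenthetical attribution of $\rho(ab-1)=N(b)-N(a)$ to the case $e^{2i+1}_{2j+1}$; it is the case $e^{2i+1}_{2j}$, whose differential contains the factor $(a-1)$ that twists to $(ab-1)$, where that identity is needed.
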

\begin{proof}
	We will only check that $(\partial\alpha-\alpha\partial)(e^{2i}_{2j})=0$ as the other cases follow similarly. We will use Lemma \ref{Lem: Identities 1} during the computations. On the one hand,
	\begin{small}\begin{align*}
		\partial\alpha(e^{2i}_{2j}) & =\sum_{j\leq k\leq i}\binom{k}{j} \partial(e^{2i}_{2k}-\rho e^{2i}_{2k+1}) \\
		& = \sum_{j\leq k\leq i}\binom{k}{j} \Big(\big(N(a)+\rho(b-1)\big)e^{2i-1}_{2k} +N(b)e^{2i-1}_{2k-1} -\rho(a-1)e^{2i-1}_{2k+1}\Big) \\
		& = \sum_{j\leq k\leq i}\binom{k}{j} \Big(bN(ab)e^{2i-1}_{2k} +N(b)e^{2i-1}_{2k-1} +\big(N(ab)-N(b)\big)e^{2i-1}_{2k+1}\Big) \\
		& = \sum_{j\leq k\leq i}\binom{k}{j} \big(bN(ab)e^{2i-1}_{2k} +N(ab)e^{2i-1}_{2k+1}\big) + \sum_{j\leq k+1\leq i}\left[\binom{k+1}{j}-\binom{k}{j}\right] N(b)e^{2i-1}_{2k+1} \\
		& = \sum_{j\leq k\leq i}\binom{k}{j} \big(bN(ab)e^{2i-1}_{2k} +N(ab)e^{2i-1}_{2k+1}\big) + \sum_{j\leq k+1\leq i}\binom{k}{j-1} N(b)e^{2i-1}_{2k+1}.
	\end{align*}\end{small}
	On the other hand,
	\begin{small}\begin{align*}
		\alpha\partial(e^{2i}_{2j}) & = \alpha\big(N(a)e^{2i-1}_{2j}+N(b)e^{2i-1}_{2j-1}\big) \\
		& = \sum_{j\leq k\leq i}\binom{k}{j} N(ab)(be^{2i-1}_{2k}+e^{2i-1}_{2k+1})+\sum_{j\leq k\leq i}\binom{k}{j-1} N(b)e^{2i-1}_{2k+1} \\
		& = \sum_{j\leq k\leq i}\binom{k}{j} N(ab)(be^{2i-1}_{2k}+e^{2i-1}_{2k+1})+\sum_{j\leq k+1\leq i}\binom{k}{j-1} N(b)e^{2i-1}_{2k+1}.
	\end{align*}\end{small}
	Therefore, $(\partial\alpha-\alpha\partial)(e^{2i}_{2j})=0$.
\end{proof}

We are left to prove that the identity  $\partial\tau+\tau\partial=1-\alpha^{p^n}$ holds. In order to do that, we first show the identities that will be used throughout the proof.

\begin{lemma} \label{Lem: Identities 2}~
	\begin{enumerate}
		\item[(i)] We have that $\sum_{r=0}^{p^n-1}\rho^{\sigma^r}b^{r}=\kappa N(b)$.
			
		
		\item[(ii)] For any $i,j\geq 0$ and $m\geq 1$, we have that
		\begin{displaymath}
			\sum_{j\leq k\leq l\leq i}m^{k-j}\binom{l}{k}\binom{k}{j}=\sum_{j\leq l\leq i}(m+1)^{l-j}\binom{l}{j}.
		\end{displaymath}
		
	\end{enumerate}
\end{lemma}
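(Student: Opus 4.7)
The plan is to prove the two identities separately, as they are essentially independent computations: (i) is an identity in $KM$ that depends on the characteristic-$p$ hypothesis on $K$, whereas (ii) is a purely combinatorial identity with no characteristic assumption.

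For part (i), I would first unfold $\rho^{\sigma^r}$ using the $Q$-action on $M$. Since $a^{\sigma}=ab$ and $b^{\sigma}=b$ on the group, and $M$ is abelian, iterating gives $(a^{i}b^{j})^{\sigma^{r}} = a^{i}b^{ri+j}$. Substituting the definition $\rho=\sum_{0\leq j\leq i < p^{n}}a^{i}b^{j}$, I get
\[
\sum_{r=0}^{p^{n}-1}\rho^{\sigma^{r}}b^{r} \;=\; \sum_{i=0}^{p^{n}-1}a^{i}\Bigl(\sum_{r=0}^{p^{n}-1}b^{r(i+1)}\Bigr)\Bigl(\sum_{j=0}^{i}b^{j}\Bigr).
\]
Next, I would analyse the inner geometric-like sum $S_{i}=\sum_{r=0}^{p^{n}-1}b^{r(i+1)}$ in $KM$. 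Write $d=\gcd(i+1,p^{n})$; then as $r$ runs through $\{0,\dots,p^{n}-1\}$, the exponent $r(i+1)\bmod p^{n}$ hits each multiple of $d$ exactly $d$ times, so $S_{i}$ is a multiple of $d$ in $K$. Since $d$ is a power of $p$, the sum $S_{i}$ vanishes whenever $p\mid i+1$, and equals $N(b)$ otherwise. Because $b^{j}N(b)=N(b)$, the factor $\sum_{j=0}^{i}b^{j}$ collapses to $(i+1)$ upon multiplication by $N(b)$, so
\[
\sum_{r=0}^{p^{n}-1}\rho^{\sigma^{r}}b^{r} \;=\; \sum_{\substack{0\leq i<p^{n}\\ p\,\nmid\,i+1}}(i+1)\,a^{i}N(b).
\]
Finally, because $K$ has characteristic $p$, the terms dropped (those with $p\mid i+1$) are exactly those with $(i+1)\equiv 0$ in $K$, so the sum over all $i$ is unchanged; this matches $\kappa N(b)=\sum_{i=0}^{p^{n}-1}(i+1)a^{i}N(b)$.

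For part (ii), I would exploit the well-known reindexing identity $\binom{l}{k}\binom{k}{j}=\binom{l}{j}\binom{l-j}{k-j}$ (both sides count ordered pairs $J\subseteq K\subseteq L$ of specified sizes). Swapping the order of summation to put $l$ outside and $k$ inside,
\[
\sum_{j\leq k\leq l\leq i}m^{k-j}\binom{l}{k}\binom{k}{j} \;=\; \sum_{j\leq l\leq i}\binom{l}{j}\sum_{k=j}^{l}m^{k-j}\binom{l-j}{k-j}.
\]
After the change of variable $k'=k-j$, the inner sum becomes $\sum_{k'=0}^{l-j}m^{k'}\binom{l-j}{k'}=(1+m)^{l-j}$ by the binomial theorem, proving the identity.

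The only real obstacle is the characteristic-$p$ manipulation in (i); the risk is a sign or indexing mistake when unpacking $(a^{i}b^{j})^{\sigma^{r}}$ and in the cancellation step where I absorb the $\sum_{j=0}^{i}b^{j}$ into $N(b)$. Part (ii) is purely formal once the Vandermonde-type reindexing is invoked, so I expect the proof will take only a few lines in total.
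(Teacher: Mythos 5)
Your proof is correct. Note that the paper itself gives no argument for this lemma (the appendix explicitly states that ``the proof of such properties is left for the reader''), so there is nothing to compare against; your write-up simply supplies the omitted verification. Both halves check out: in (i), the unfolding $(a^ib^j)^{\sigma^r}=a^ib^{ri+j}$ is the right reading of the action $a^\sigma=ab$, $b^\sigma=b$, the inner sum $S_i=\sum_{r=0}^{p^n-1}b^{r(i+1)}$ does equal $N(b)$ when $p\nmid i+1$ and vanishes in characteristic $p$ otherwise (since it is $d\sum_t b^{td}$ with $d=\gcd(i+1,p^n)$ a positive power of $p$), the absorption $b^jN(b)=N(b)$ produces the factor $i+1$, and the discarded indices contribute $0$ to $\kappa N(b)$ because their coefficients $i+1$ vanish in $K$; in (ii), the trinomial revision $\binom{l}{k}\binom{k}{j}=\binom{l}{j}\binom{l-j}{k-j}$ followed by the binomial theorem is exactly the standard route.
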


\begin{proposition}\label{prop: alpha and tau appendix}
	The maps $\alpha$ and $\tau$ satisfy the identity $\partial\tau+\tau\partial=1-\alpha^{p^n}$.
\end{proposition}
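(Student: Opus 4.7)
The plan is to verify $\partial\tau + \tau\partial = 1 - \alpha^{p^n}$ by a direct case-by-case computation on the four parity families of basis elements $e^{2i}_{2j}$, $e^{2i}_{2j+1}$, $e^{2i+1}_{2j}$, $e^{2i+1}_{2j+1}$. First I would establish closed forms for the iterates $\alpha^{r}$ on each family by induction on $r$, with the inductive step driven by the binomial identity $\sum_{k} m^{k-j}\binom{l}{k}\binom{k}{j} = (m+1)^{l-j}\binom{l}{j}$ that underlies Lemma \ref{Lem: Identities 2}(ii). In every case the expression takes the shape $\alpha^{r}(e^{*}_{*}) = \sum_{l} r^{l-j}\binom{l}{j}\,(\,\ldots\,)$, where the inner factor is a $b^{r}$-, $N_{r}(b)$-, or $\sum_{s=0}^{r-1}\rho^{\sigma^{s}}b^{s}$-decorated linear combination of basis vectors in the same column pair (with $N_{r}(b) = 1 + b + \cdots + b^{r-1}$). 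Setting $r = p^{n}$ in characteristic $p$ collapses $r^{l-j}$ to $\delta_{l,j}$, yields $b^{p^{n}} = 1$ and $N_{p^{n}}(b) = N(b)$, and converts the $\rho$-sum into $\kappa N(b)$ via Lemma \ref{Lem: Identities 2}(i). Hence $1 - \alpha^{p^{n}}$ vanishes on the odd-column basis, equals $-N(b)\,e^{2i+1}_{2j+1}$ on $e^{2i+1}_{2j}$, and equals $\kappa N(b)\,e^{2i}_{2j+1}$ on $e^{2i}_{2j}$.

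Next I would expand $(\partial\tau + \tau\partial)(e^{i}_{j})$ on each family using the boundary formulas of Lemma \ref{Lem: Identities 1}(ii) together with the defining formulas for $\tau$. In every case the resulting expression combines $N(a)$-weighted terms with opposite coefficients $\pm(j+1)$ that cancel, plus either a $(b-1)\kappa$-type cross-term that cancels internally (odd-column cases) or a pair of $N(b)$-terms with coefficients $(j+1)$ and $j$ whose difference leaves behind a single surviving $-N(b)\,e^{2i+1}_{2j+1}$ or $\kappa N(b)\,e^{2i}_{2j+1}$ (even-column cases). The key algebraic reductions are $\kappa(a-1) = -N(a)$ from Lemma \ref{Lem: Identities 1}(i), commutativity of $\kappa$ with $b$ (since $M$ is abelian), and the characteristic-$p$ vanishing $\kappa N(a) = \tfrac{1}{2}p^{n}(p^{n}+1)\,N(a) = 0$ available for $p \geq 3$. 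A term-by-term comparison with the output of the first step then finishes the verification.

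The main obstacle is the bookkeeping in the iteration of $\alpha$ on the even-column elements. Because $\alpha$ takes values in $P_{\bullet}^{\sigma^{-1}}$, applying $\alpha$ to the $\rho\,e^{2i}_{2k+1}$-summand of $\alpha(e^{2i}_{2j})$ produces a $\rho^{\sigma}$ (not $\rho$) in front of the next layer, and after $r$ iterations the $\sigma$-twists accumulate into exactly $\sum_{s=0}^{r-1}\rho^{\sigma^{s}}b^{s}$. Arranging these twists so that at $r = p^{n}$ the sum lands in precisely the form to which Lemma \ref{Lem: Identities 2}(i) applies is the delicate part of the argument; once that is in place, the remainder is a careful but entirely mechanical case-by-case matching of coefficients.
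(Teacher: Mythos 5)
Your proposal is correct and follows essentially the same route as the paper: a case-by-case verification on the four parity families, a closed form for $\alpha^m$ obtained by induction via the binomial identity of Lemma \ref{Lem: Identities 2}(ii) with the accumulated twists $\sum_{r=0}^{m-1}\rho^{\sigma^r}b^r$ collapsing to $\kappa N(b)$ at $m=p^n$ by Lemma \ref{Lem: Identities 2}(i), and a direct expansion of $\partial\tau+\tau\partial$ using $\kappa(a-1)=-N(a)$. The case values you predict ($0$ on the odd-lower-index elements, $-N(b)e^{2i+1}_{2j+1}$ on $e^{2i+1}_{2j}$, and $\kappa N(b)e^{2i}_{2j+1}$ on $e^{2i}_{2j}$) all check out, the last agreeing with the one case the paper writes out explicitly.
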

\begin{proof}
	We will only show that $(\partial\tau+\tau\partial)(e^{2i}_{2j})=(1-\alpha^{p^n})(e^{2i}_{2j})$ since the other cases can be done in a similar way. First, we  compute $(\partial\tau+\tau\partial)(e^{2i}_{2j})$ using Lemma \ref{Lem: Identities 1}. On the one hand, 
	\begin{align*}
		\partial\tau(e^{2i}_{2j}) & =\partial\big(-(j+1)\kappa e^{2i+1}_{2j+2}\big) \\
		& = -(j+1)\kappa (a-1)e^{2i}_{2j+2}+(j+1)\kappa N(b)e^{2i+1}_{2j+1} \\
		& = (j+1)N(a)e^{2i}_{2j+2}+(j+1)\kappa N(b)e^{2i+1}_{2j+1} .
	\end{align*}
	On the other hand,
	\begin{align*}
		\tau\partial(e^{2i}_{2j}) & =\tau\big(N(a)e^{2i-1}_{2j}+N(b)e^{2i-1}_{2j-1}\big) \\
		& = -(j+1)N(a) e^{2i}_{2j+2}-jN(b)\kappa e^{2i}_{2j+1}.
	\end{align*}
	As a consequence, 
	\begin{displaymath}
		(\partial\tau+\tau\partial)(e^{2i}_{2j})=\kappa N(b)e^{2i}_{2j+1}.
	\end{displaymath}
	Now, we  compute $(\partial\tau+\tau\partial)(e^{2i}_{2j})$. Applying $\alpha$ repeatedly to $e^{2i}_{2j}$ and using Lemma \ref{Lem: Identities 2}, we obtain that 
	\begin{displaymath}
		\alpha^m(e^{2i}_{2j}) = \sum_{j\leq k\leq i}m^{k-j}\binom{k}{j} \bigg(e^{2i}_{2k}- \sum_{r=0}^{m-1}\rho^{\sigma^r}b^re^{2i}_{2k+1}\bigg)
	\end{displaymath}
	for any $1\leq m\leq p^n$. 
	Therefore,
	\begin{displaymath}
		\alpha^{p^n}(e^{2i}_{2j}) =\sum_{j\leq k\leq i}p^{n(k-j)}\binom{k}{j}\big(e^{2i}_{2k}-\kappa N(b)e^{2i}_{2k+1}\big)=e^{2i}_{2j} -\kappa N(b)e^{2i}_{2j+1},
	\end{displaymath}
	and thus $(\partial\tau+\tau\partial)(e^{2i}_{2j})=(1-\alpha^{p^n})(e^{2i}_{2j})$.
\end{proof}

\end{appendix}


%
%
%
%
%
%
%

\bibliographystyle{plain}


\end{document}